 \newtheorem{DE}{Definition}[section]
\newcommand {\sm} {\setminus}
 \newcommand{\qed}{\relax\ifmmode\hskip2em\Box\else\unskip\nobreak\hfill$\Box$\fi}
\newtheorem{theorem}[DE]{Theorem}
\newtheorem{lemma}[DE]{Lemma}
\theoremstyle{break}\theorembodyfont{\rmfamily}}
\theoremstyle{break}\theorembodyfont{\rmfamily}}
\newcounter{claim}
\newenvironment{proof}[1][]%
	{\noindent {\setcounter{claim}{0}\sc proof --- }{#1}{}}{\qed\vspace{2ex}}
	{\refstepcounter{claim}\vspace{1ex}\noindent {(\it\arabic{claim}) {#1}{}}\it}{\vspace{1ex}}
	{\noindent {}{#1}{}}{ This proves~(\arabic{claim}).\vspace{1ex}}
\begin{document}

 \title{The (theta, wheel)-free graphs\\ Part III: cliques, stable sets and coloring}
\author{Marko Radovanovi\'c\thanks{University
    of Belgrade, Faculty of Mathematics, Belgrade, Serbia. Partially
    supported by Serbian Ministry of Education, Science and
    Technological Development project 174033. E-mail:
    markor@matf.bg.ac.rs}~, Nicolas Trotignon\thanks{CNRS, LIP, ENS de
    Lyon. Partially supported by ANR project Stint under reference
    ANR-13-BS02-0007 and by the LABEX MILYON (ANR-10-LABX-0070) of
    Universit\'e de Lyon, within the program ‘‘Investissements
    d'Avenir’’ (ANR-11-IDEX-0007) operated by the French National
    Research Agency (ANR).  Also Universit\'e Lyon~1, universit\'e de
    Lyon. E-mail: nicolas.trotignon@ens-lyon.fr}~, Kristina Vu\v
  skovi\'c\thanks{School of Computing, University of Leeds, and
    Faculty of Computer Science (RAF), Union University, Belgrade,
    Serbia.  Partially supported by EPSRC grant EP/N0196660/1, and
    Serbian Ministry of Education and Science projects 174033 and
    III44006. E-mail: k.vuskovic@leeds.ac.uk}}

\maketitle

\begin{abstract}
  A hole in a graph is a chordless cycle of length at least 4. A theta
  is a graph formed by three paths between the same pair of distinct
  vertices so that the union of any two of the paths induces a hole. A
  wheel is a graph formed by a hole and a vertex that has at least 3
  neighbors in the hole.
  In this series of papers we study the class of graphs that do not contain
  as an induced subgraph a theta nor a wheel. In Part II of the series we
  prove a decomposition theorem for  this class, that uses  clique
  cutsets and 2-joins, and consequently obtain a polynomial time recognition
  algorithm for the class. In this paper we further use this decomposition theorem
  to obtain polynomial time algorithms for maximum weight clique, maximum weight
  stable set and coloring problems. We also show that
  for a graph  $G$ in the class, if its maximum clique size is $\omega$, then its chromatic number is bounded by
  max$\{\omega ,3\}$, and that the class is 3-clique-colorable.
 \end{abstract}

 \section{Introduction}\label{sec:intro}

In this article, all graphs are finite and simple.
We say that a graph $G$   \emph{contains} a graph $H$ if
$H$ is isomorphic to an induced subgraph of $G$, and that $G$ is
\emph{$H$-free} if it does not contain $H$.  For a family of graphs ${\cal H}$,
$G$ is \emph{${\cal H}$-free} if for every $H\in {\cal H}$, $G$ is $H$-free.

A \emph{hole} in a graph is a chordless cycle of length at least 4.
A \emph{theta}
  is a graph formed by three paths between the same pair of distinct
  vertices so that the union of any two of the paths induces a hole. A
  \emph{wheel} is a graph formed by a hole and a vertex that has at least 3
  neighbors in the hole.

In this series of papers we study (theta, wheel)-free graphs.
This project is motivated and
explained in more detail in Part I of the
series~\cite{twf-p1}, where two subclasses of (theta, wheel)-free
graphs are studied.  In Part II of the series~\cite{twf-p2}, we prove
a decomposition theorem for (theta, wheel)-free graphs that uses clique cutsets
and 2-joins, and use it to obtain an $\mathcal O(n^4m)$-time recognition
algorithm for the class (where $n$ denotes the number of vertices and $m$ the number of edges of a given graph).
In this part we use the decomposition theorem from~\cite{twf-p2} to obtain further
properties of the graphs in the class
and to construct polynomial time algorithms for maximum weight clique,
maximum weight stable set, and coloring problems.
In Part IV of the series~\cite{twf-p4} we show that the induced version of
the $k$-linkage problem can be solved in polynomial time for (theta, wheel)-free
graphs.

\subsection*{The main results and the outline of the paper}

Throughout the paper we will denote by ${\cal C}$ the class of (theta, wheel)-free
graphs. Also, $n$ will denote the number of vertices and $m$ the number of edges of
a given graph.

For completeness, in Section~\ref{sec:decTh}, we state the decomposition
theorem for ${\cal C}$ and several other results proved in previous parts that
will be needed here.
Fundamental for our algorithms are the 2-join decomposition techniques developed in
 \cite{nicolas.kristina:two} which we also describe here, as well as
 prove some preliminary lemmas.

In Section~\ref{sec:maxCl}, we prove that every graph in ${\cal C}$
contains a bisimplicial vertex, and use this property to
give an $\mathcal O(n^2m)$-time
algorithm for the maximum weight clique problem on ${\cal C}$,
as well as to show that the class is 3-clique-colorable.

In Section~\ref{sec:stable}, we give an $\mathcal O(n^6m)$-time
algorithm for the maximum weight stable set problem on ${\cal C}$.

In Section~\ref{sec:vCol}, we give an $\mathcal O(n^5m)$-time
algorithm that optimally colors  graphs from ${\cal C}$.  We also
prove that every graph in ${\cal C}$, with maximum clique size
$\omega$, admits a coloring with at most $\max\{\omega, 3\}$ colors.


Since ${\cal C}$ contains all chordal graphs, clearly ${\cal C}$ has unbounded
clique-width.
In Section~\ref{s:cW}, we show how an example of Lozin and Rauthenbach
\cite{lozin:unboundedCW}
implies that the class of graphs from ${\cal C}$ that have no clique cutset
also has
unbounded clique-width.

\subsection*{Terminology and notation}

A {\em clique} in a graph is a (possibly empty) set of pairwise adjacent vertices.
We say that a clique is
{\it big} if it is of size at least 3.
A {\em stable set} in a graph is a (possibly empty) set of pairwise nonadjacent vertices.
A {\it diamond} is a
graph obtained from a complete graph on 4 vertices by deleting an edge. A {\it claw} is a
graph induced by vertices $u,v_1,v_2,v_3$ and edges $uv_1,uv_2,uv_3$.

A {\em path} $P$ is a sequence of distinct vertices
$p_1p_2\ldots p_k$, $k\geq 1$, such that $p_ip_{i+1}$ is an edge for
all $1\leq i <k$.  Edges $p_ip_{i+1}$, for $1\leq i <k$, are called
the {\em edges of $P$}.  Vertices $p_1$ and $p_k$ are the {\em ends}
of $P$.  A cycle $C$ is a sequence of vertices $p_1p_2\ldots p_kp_1$,
$k \geq 3$, such that $p_1\ldots p_k$ is a path and $p_1p_k$ is an
edge.  Edges $p_ip_{i+1}$, for $1\leq i <k$, and edge $p_1p_k$ are
called the {\em edges of $C$}.  Let $Q$ be a path or a cycle.  The
vertex set of $Q$ is denoted by $V(Q)$.  The {\em length} of $Q$ is
the number of its edges.  An edge $e=uv$ is a {\em chord} of $Q$ if
$u,v\in V(Q)$, but $uv$ is not an edge of $Q$. A path or a cycle $Q$
in a graph $G$ is {\em chordless} if no edge of $G$ is a chord of
$Q$. 


Let $G$ be  a graph.
For $x\in V(G)$, $N(x)$ is the set of all neighbors of $x$ in $G$, and $N[x]=N(x) \cup \{ x\}$.
For $S\subseteq V(G)$, $G[S]$ denotes the subgraph of $G$ induced by $S$.
For disjoint subsets $A$ and $B$ of $V(G)$, we say that $A$ is {\em complete} (resp. {\em anticomplete})
to $B$ if every vertex of $A$ is adjacent (resp. nonadjacent) to every vertex of $B$.

In a graph $G$, a
subset $S$ of vertices and/or edges is a {\em cutset} if its removal yields
a disconnected graph.

When clear from the context, we will sometimes write $G$ instead of $V(G)$.

\section{Decomposition of (theta, wheel)-free graphs}
\label{sec:decTh}

To state the decomposition theorem for graphs in ${\cal C}$ we first define the
basic classes involved and then the cutsets used.

\subsection*{Basic classes}

We will refer to P-graphs and line graphs of triangle-free chordless graphs (which we now define)
as {\em basic} graphs.

A graph $G$ is {\em chordless} if no cycle of $G$ has a chord.
An
edge of a graph is {\em pendant} if at least one of its endnodes has
degree~1.  A \emph{branch vertex} in a graph is a vertex of degree at
least~3.  A {\em branch} in a graph $G$ is a path of length at least~1
whose internal vertices are of degree 2 in $G$ and whose endnodes are
both branch vertices.  A {\em limb} in a graph $G$ is a path of length
at least~1 whose internal vertices are of degree 2 in $G$ and whose
one endnode has degree at least 3 and the other one has degree~1. Two
distinct branches are {\em parallel} if they have the same endnodes.
Two distinct limbs are {\em parallel} if they share the same vertex of
degree at least~3.

Cut vertices of a graph $R$ that are also branch vertices are called
the {\em attaching vertices} of $R$.  Let $x$ be an attaching vertex
of a graph $R$, and let $C_1, \ldots ,C_t$ be the connected components
of $R\setminus x$ that together with $x$ are not limbs of $R$ (possibly, $t=0$, when all
connected components of $R\setminus x$ are limbs).  If $x$ is the end
of at least two parallel limbs of $R$, let $C_{t+1}$ be the subgraph of $R$ formed by
all the limbs of $R$ with endnode $x$.  The graphs
$R[V(C_i)\cup \{ x\} ]$ (for $i=1, \ldots, t$) and the graph $C_{t+1}$
(if it exists) are the \emph{$x$-petals} of $R$.

For any integer $k\geq 1$, a {\em $k$-skeleton}  is a graph $R$ such that:

\begin{enumerate}
\item $R$ is connected, triangle-free, chordless and contains at least
  three pendant edges (in particular, $R$ is not a path).

 \item $R$ has no parallel branches (but it may contain
   parallel limbs).

\item For every cut vertex $u$ of $R$, every component of
  $R\setminus u$ has a vertex of degree 1 in $R$.

\item For every vertex cutset $S=\{a, b\}$ of $R$ and for
  every component $C$ of $R\setminus S$, either $R[C\cup S]$ is a
  chordless path from $a$ to $b$, or $C$ contains at least one vertex
  of degree 1 in $R$.

\item For every edge $e$ of a cycle of $R$, at least one of the
  endnodes of $e$ is of degree 2.

\item Each pendant edge of $R$ is given one label, that is an integer
  from $\{1, \dots, k\}$.

\item Each label from $\{ 1, \ldots ,k\}$ is given at least once (as a
  label), and some label is used at least twice.

\item If some pendant edge whose one endnode is of degree at least 3
  receives label $i$, then no other pendant edge receives label $i$.

\item If $R$ has no branches then $k=1$, and otherwise
 if two limbs of $R$ are parallel, then
  their pendant edges receive different labels and at least one of these labels is used more then once.

\item If $k>1$ then for every attaching vertex $x$ and for
  every $x$-petal $H$ of $R$, there are at least two distinct labels
  that are used in $H$.  Moreover, if $\overline{H}$ is a union of at least one but not all $x$-petals,
  then there is a label $i$ such that both $\overline{H}$ and $(R\setminus\overline{H})\cup\{x\}$ have
  pendant edges with label $i$.

\item If $k=2$, then both labels are used at least twice.
\end{enumerate}

Note that if  $R$ is a skeleton, then it edgewise
partitions into its branches and its limbs. Also, there is a trivial
one-to-one correspondence between the pendant edges of $R$ and the
limbs of $R$: any pendant edge belongs to a unique limb, and
conversely any limb contains a unique pendant edge.

If $R$ is a graph, then the {\em line graph} of $R$, denoted by $L(R)$, is the graph whose vertices are the edges of $R$,
and such that two vertices of $L(R)$ are adjacent if and only if the corresponding edges are adjacent in $R$.

A {\em P-graph}  is any graph $B$ that can be
constructed as follows:

\begin{itemize}
\item
Pick an integer $k\geq 1$ and a $k$-skeleton $R$.
\item
Build $L(R)$, the line graph of $R$. The vertices of $L(R)$ that
correspond to pendant edges of $R$ are called {\em pendant vertices}
of $L(R)$, and they receive the same label as their corresponding
pendant edges in $R$.
\item Build a clique $K$ with vertex set $\{ v_1, \ldots ,v_k\}$,
  disjoint from $L(R)$.
\item $B$ is now constructed from $L(R)$ and $K$ by adding edges
  between $v_i$ and all pendant vertices of $L(R)$ that have label
  $i$, for $i=1, \ldots ,k$.
\end{itemize}

We say that $K$ is the {\em special clique}
of $B$ and $R$ is the {\em skeleton} of $B$.

\begin{lemma}
  \label{l:twoBranches}
  Every P-graph $G$ contains two distinct branches of length at least~2
  (in particular, these two branches both contain a vertex of degree~2).
\end{lemma}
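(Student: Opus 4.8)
The plan is to locate the two branches inside the part of $B$ coming from the limbs of its skeleton, exploiting the labelling conditions in the definition of a $k$-skeleton. Fix a P-graph $B$, its $k$-skeleton $R$, the line graph $L(R)$ used to build $B$, and the special clique $K=\{v_1,\dots,v_k\}$. The first thing I would record is that every vertex $v_j$ of $K$ has degree at least $3$ in $B$: this is a short case analysis on $k$, where for $k=1$ one uses that $R$ has at least three pendant edges (condition (i)), for $k=2$ one uses that then both labels are used at least twice (condition (xi)), and for $k\ge 3$ the clique $K$ by itself already contributes $k-1\ge 2$ neighbours, plus at least one pendant vertex of the right label by (vii).

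The core step is to produce two useful limbs of $R$. By condition (vii) there is a label $i$ borne by two distinct pendant edges $e_1,e_2$ of $R$. By condition (viii), a pendant edge having an endnode of degree at least $3$ is the unique pendant edge carrying its label, so neither $e_1$ nor $e_2$ has an endnode of degree at least $3$; writing $e_j=x_jy_j$ with $y_j$ the endnode of degree $1$, this forces $\deg_R(x_j)=2$ (if $\deg_R(x_j)=1$ then $R$ is a single edge, against (i)). Let $L_j$ be the unique limb of $R$ containing $e_j$. Since $y_j$ is the endnode of degree $1$ of $L_j$, its neighbour $x_j$ in $L_j$ is an internal vertex of $L_j$ (it has degree $2$, so it is not the branch-vertex end), hence $L_j$ has length $s_j\ge 2$. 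As $e_1\ne e_2$ and distinct limbs are edge-disjoint, $L_1$ and $L_2$ are distinct edge-disjoint limbs.

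It remains to convert each limb $L_j$ into a branch of $B$. Let $f^j_1,\dots,f^j_{s_j}$ be the vertices of $L(R)$ corresponding to the consecutive edges of $L_j$, with $f^j_1$ incident in $R$ with the branch-vertex end $\ell^j$ of $L_j$, and $f^j_{s_j}$ corresponding to $e_j$. Then $P_j:=f^j_1 f^j_2\cdots f^j_{s_j}v_i$ is a path of $B$: consecutive $f$'s are adjacent in $L(R)$, and $f^j_{s_j}$, being a pendant vertex of $L(R)$ with label $i$, is adjacent to $v_i$ in $B$. A routine degree count — using only that $L(R)$ is a line graph and that passing from $L(R)$ to $B$ adds new edges solely at pendant vertices — shows that $f^j_1$ has degree at least $3$ in $B$ (it already has degree $\deg_R(\ell^j)\ge 3$ in $L(R)$ and is not a pendant vertex), that each $f^j_t$ with $2\le t\le s_j-1$ has degree $2$ in $B$, and that $f^j_{s_j}$ has degree $2$ in $B$ (degree $1$ in $L(R)$, plus the single edge to $v_i$). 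Together with $\deg_B(v_i)\ge 3$, this makes $P_j$ a branch of $B$ of length $s_j\ge 2$, all of whose internal vertices — for instance $f^j_{s_j}$ — have degree $2$. Since $f^j_1$ lies on $P_1$ but not on $P_2$, the branches $P_1$ and $P_2$ are distinct, which finishes the proof.

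I expect the only real difficulty to be bookkeeping: keeping straight the degrees of vertices in $R$, in $L(R)$ and in $B$, so as to verify that the endpoints $f^j_1$ and $v_i$ are genuinely branch vertices of $B$ while each intermediate $f^j_t$ has degree exactly $2$; and noticing that the one way in which $P_1$ and $P_2$ could fail to be distinct is ruled out, since even when $L_1$ and $L_2$ happen to be parallel limbs the paths $P_1$ and $P_2$ still meet only in $v_i$. Picking out exactly the relevant conditions from the long defining list of a $k$-skeleton — (vii) and (viii) for the labels, (i) and (xi) for $\deg_B(v_i)\ge 3$ — is the one other point calling for a little care.
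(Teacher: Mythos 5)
Your proof is correct and follows essentially the same route as the paper's: take a label $i$ used twice (condition (vii)), use condition (viii) to force the two limbs carrying that label to have length at least $2$, and observe that these limbs become branches of the P-graph ending at $v_i$, which has degree at least $3$. The extra degree bookkeeping in $L(R)$ and the case analysis showing $\deg_B(v_j)\ge 3$ are just a more detailed spelling-out of what the paper states in one parenthetical remark.
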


\begin{proof}
  Let $i$ be a label of $G$ that is used at least twice (it exists by (vii)) and consider two
  pendant edges of the skeleton $R$ of $G$ that receive this label. Then,
   by condition (viii) the limbs that contain these pendant edges are of
   length at least 2, and hence they correspond to branches of length at
   least 2 in $G$ (note that by (i) the degree of $v_i$ in $G$ is at least 3).
\end{proof}

\begin{lemma}[\cite{twf-p1}]\label{p1l2.4}
$G$ is the line graph of a triangle-free chordless graph if and only if $G$ is (wheel, diamond, claw)-free.
\end{lemma}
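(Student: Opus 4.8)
The plan is to argue both directions by passing back and forth between $R$ and $L(R)$ through the standard dictionary (edges of $R$ $\leftrightarrow$ vertices of $L(R)$; all edges of $R$ through a fixed vertex $\leftrightarrow$ a clique of $L(R)$), isolating which of the three forbidden configurations corresponds to which hypothesis on $R$.

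For the forward direction, assume $G=L(R)$ with $R$ triangle-free and chordless. Claw-freeness is the classical property of all line graphs: the three leaves of a claw centred at an edge $uv$ would, by pigeonhole, contain two edges that both meet $u$ or both meet $v$, hence are adjacent. For diamond-freeness, I would use that three pairwise-adjacent edges of $R$ either share a common vertex or form a triangle of $R$; as $R$ is triangle-free, each of the two triangles of a putative diamond is a star centred at a vertex of $R$, and following these two centres shows they coincide, which makes the two nonadjacent vertices of the diamond meet a common vertex of $R$ -- a contradiction. For wheel-freeness, a hole of $L(R)$ of length $k\ge 4$ translates into a cycle $C=x_0\cdots x_{k-1}$ of $R$ (the $x_i$ being forced distinct because the hole is chordless and $R$ is simple), and a hub is an edge $v=ab$ of $R$ that is not an edge of $C$ yet meets at least three edges of $C$; if $a,b\in V(C)$ then $ab$ is a chord of $C$, and otherwise a single endpoint of $v$ lies on $C$ and is an endnode of three edges of $C$ -- both impossible.

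For the converse, assume $G$ is (wheel, diamond, claw)-free. I would first build $R$ explicitly in Krausz style. Diamond-freeness forces any two distinct maximal cliques of $G$ to meet in at most one vertex, so the maximal cliques partition $E(G)$; claw-freeness then forces every vertex of $G$ to lie in at most two maximal cliques (a third would give three pairwise-nonadjacent neighbours). Let $R$ have one vertex per maximal clique of $G$, plus a pendant vertex for each vertex of $G$ in fewer than two maximal cliques, and one edge $e_v$ for each $v\in V(G)$ joining the (at most two) maximal cliques containing $v$, using pendant vertices to complete $e_v$ when necessary; then $v\mapsto e_v$ is an isomorphism $G\to L(R)$. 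This $R$ is triangle-free: a triangle of $R$ would consist of three maximal cliques pairwise sharing a vertex, and those three shared vertices would form a triangle of $G$ lying in one maximal clique, which would then coincide with two of the three.

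The crux, and the step I expect to be the main obstacle, is proving that wheel-freeness makes $R$ chordless -- this is precisely what the word ``chordless'' in the statement costs. If a cycle $D$ of $R$ had a chord $f$, then the edges of $D$ induce a hole $H^\ast$ of $G=L(R)$ of length at least $4$ (consecutive edges of $D$ share a vertex of $R$, nonconsecutive ones do not), the chord $f$ is a vertex of $G$ not on $H^\ast$ adjacent to exactly the (four distinct, since $f$ is not an edge of $D$) edges of $D$ at its two endpoints, and so $H^\ast\cup\{f\}$ would be a wheel in $G$ -- a contradiction. The delicate parts throughout are the bookkeeping in this hole/cycle and hub/chord dictionary, checking that the relevant vertex sets of $R$ are genuinely distinct (so that cycles stay simple and the maximal cliques give distinct vertices), and dealing with the small degenerate cases -- triangles, $K_4$, isolated vertices.
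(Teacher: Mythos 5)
This lemma is imported from Part~I of the series (\cite{twf-p1}); the present paper gives no proof of it, so there is nothing here to compare against line by line, but your argument is correct and is essentially the standard (and presumably the original) route: the Krausz-type partition of $E(G)$ into maximal cliques, with diamond-freeness giving edge-disjointness and claw-freeness giving at most two cliques per vertex, yields the root graph $R$ with $L(R)\cong G$ and $R$ triangle-free, while the hole/cycle and hub/chord dictionary translates wheel-freeness of $L(R)$ into chordlessness of $R$ in both directions. The details you defer (distinctness of the $x_i$, the degenerate cases such as $K_3$, $K_4$ and isolated vertices) all check out under the construction you describe, so the sketch is sound.
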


\begin{lemma}[\cite{twf-p2}]\label{p2l4.2}
Every P-graph is (theta, wheel, diamond)-free.
\end{lemma}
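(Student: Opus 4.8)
The plan is to leverage Lemma~\ref{p1l2.4} as much as possible, and then handle the extra vertices of the special clique by hand. Let $B$ be a P-graph constructed from a $k$-skeleton $R$, its line graph $L(R)$, and the special clique $K=\{v_1,\dots,v_k\}$. First I would observe that $L(R)$ itself, being the line graph of a triangle-free chordless graph, is (wheel, diamond, claw)-free by Lemma~\ref{p1l2.4}; in particular it is (theta, wheel, diamond)-free since a theta contains an induced claw (the branch vertex of the theta together with the three first internal vertices of the paths forms a claw, as the three paths have length at least~2 in a theta). So any forbidden configuration in $B$ must use at least one vertex of $K$.

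Next I would analyse the neighbourhood structure of a vertex $v_i\in K$ inside $B$. Its neighbours are: the other vertices of $K$ (since $K$ is a clique), and the pendant vertices of $L(R)$ with label $i$. By skeleton conditions (vi) and (viii), distinct pendant edges with label $i$ are non-adjacent in $R$ unless they share the degree-$\geq 3$ endpoint; but condition (viii) forbids two pendant edges with the same label when one of them has an endpoint of degree $\geq 3$. Hence the pendant vertices with label $i$ form a stable set in $L(R)$, and each such pendant vertex has exactly one neighbour in $L(R)$ (the edge of $R$ adjacent to the pendant edge, which exists and is unique since $R$ is not a path). I would record the consequence: $N_B(v_i)\cap L(R)$ is a stable set, and for $i\neq j$, a pendant vertex labelled $i$ and one labelled $j$ correspond to pendant edges of $R$ with distinct labels, which are non-adjacent in $R$ (again using (viii) when degree-$\geq 3$ endpoints are involved, and triangle-freeness / the label-uniqueness conditions otherwise), so they are non-adjacent in $L(R)$.

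With this in hand I would rule out each forbidden subgraph. \textbf{Diamond:} a diamond has two non-adjacent vertices of degree~3; if a diamond $D$ meets $K$, then since $K$ is a clique $|D\cap K|\leq 2$. If $|D\cap K|=1$, say $v_i\in D$, then the other three vertices of $D$ lie in $L(R)$ and two of them are neighbours of $v_i$, hence pendant vertices labelled $i$, hence non-adjacent — but in a diamond any two of the three non-$v_i$ vertices... one checks this forces an edge between two label-$i$ pendant vertices, contradiction; the case $|D\cap K|=2$ is similar, using that the two $L(R)$-vertices of $D$ would have to be adjacent label-$i$ (or mixed-label) pendant vertices. \textbf{Wheel:} a wheel has a vertex $u$ with $\geq 3$ neighbours on a hole $H$. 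If $u\in K$, then $\geq 3$ of its hole-neighbours are pendant vertices of $L(R)$ with label $i$, hence pairwise non-adjacent, so they are pairwise at distance $\geq 2$ on $H$; since each pendant vertex has a unique $L(R)$-neighbour, the hole is forced through $K$ or through these degree-one-ish vertices in a way that creates a chord or a shortcut — I would trace the hole and derive a contradiction with chordlessness of $H$. If $u\in L(R)$ but the hole meets $K$, the hole passes through some $v_i$ between two label-$i$ pendant vertices, and I would argue $u$ cannot have $3$ neighbours on such a hole given the sparse structure. \textbf{Theta:} since a theta contains a claw and $L(R)$ is claw-free, the claw (and hence the theta) must use a vertex of $K$; but a vertex $v_i$ has the property that $N_B(v_i)\setminus K$ is stable and the members of $K$ are all mutually adjacent, so $v_i$ together with its neighbours cannot host the centre of a theta's branch vertex with three internally-disjoint chordless paths — the key point being that two of the three paths leaving $v_i$ must start with non-adjacent vertices, and tracing them back through the tree-like structure of $R$ (cut vertices, conditions (iii)–(v)) shows they cannot reconnect without creating a chord.

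\textbf{Main obstacle.} The routine part is the diamond and the reduction of theta to claw-freeness; the genuinely delicate part is the wheel case when the wheel-centre lies in $L(R)$ while the rim passes through one or two vertices of $K$, because then one must understand how holes of $B$ sit relative to the skeleton $R$ — essentially translating a hole of $B$ back into a closed walk of $R$ that may use the identification at the labels. I expect to need a preliminary sub-claim describing exactly what the holes of $B$ look like (either entirely inside $L(R)$, corresponding to cycles of $R$, or passing through exactly one $v_i$ and corresponding to a path of $R$ between two label-$i$ pendant edges), and then the wheel-freeness follows by counting neighbours of a fixed vertex against conditions (v) (on cycles, edges have a degree-2 endpoint) and the sparsity of $R$.
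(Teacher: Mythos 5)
First, a point of context: the paper you were given does not prove Lemma~\ref{p2l4.2} at all --- it is quoted from Part~II of the series (\cite{twf-p2}), where its proof is a substantial structural analysis of how holes of a P-graph sit relative to the skeleton $R$. So your proposal cannot be matched against an in-paper argument; it has to stand on its own as a complete proof, and it does not. The diamond case is essentially fine (though $|D\cap K|\le 2$ does not follow merely from $K$ being a clique --- a diamond contains triangles --- you need the extra observation that an $L(R)$-vertex sees at most one vertex of $K$, which also disposes of $|D\cap K|=3$ and gives the edge-count contradiction for $|D\cap K|=2$), and the reduction of the theta case to ``the forbidden configuration must meet $K$'' via claw-freeness of $L(R)$ is a legitimate first step.

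The genuine gap is that the two hard cases are only gestured at, and some of the supporting claims you lean on are not correct as stated. You assert that every pendant vertex of $L(R)$ has a unique neighbour in $L(R)$; this fails whenever the pendant edge is attached at a branch vertex of $R$ (its line-graph vertex then has at least two $L(R)$-neighbours), and condition~(viii) only forces the degree-2 attachment for labels used more than once. Likewise, in the wheel case with centre $v_i\in K$ you claim that at least three of its rim-neighbours are label-$i$ pendant vertices, but up to two of them may be other vertices of $K$ lying on the rim, a case your sketch never treats. Most importantly, the wheel case with centre in $L(R)$ and rim through $K$, and the theta case with branch vertices in $L(R)$ but internal vertices in $K$, are exactly the content of the lemma, and for these you only write ``I would trace the hole and derive a contradiction'' and ``tracing them back through the tree-like structure of $R$ \ldots shows they cannot reconnect without creating a chord.'' Your own closing paragraph concedes this: the ``preliminary sub-claim describing exactly what the holes of $B$ look like'' (each hole either lies in $L(R)$ and comes from a cycle of $R$, or passes through exactly one $v_i$ and comes from a path of $R$ between two label-$i$ pendant edges) is precisely the missing lemma, and proving it requires the skeleton conditions (iii)--(v) and (x) in a careful way. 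Without that sub-claim actually established and then exploited case by case, the proposal is an outline of a plausible strategy rather than a proof.
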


\subsection*{Cutsets}

 A vertex cutset $S$ is a {\em clique cutset} if
$S$ is a clique.  Note that every disconnected graph has a clique
cutset: the empty set.

An {\em almost 2-join} in a graph $G$ is a pair $(X_1,X_2)$ that is a
partition of $V(G)$, and such that:

\begin{itemize}
\item For $i=1,2$, $X_i$ contains disjoint nonempty sets $A_i$ and
  $B_i$, such that every vertex of $A_1$ is adjacent to every vertex   of $A_2$, every vertex of $B_1$ is adjacent to every vertex of
  $B_2$, and there are no other adjacencies between $X_1$ and $X_2$.
\item For $i=1,2$, $|X_i|\geq 3$.
\end{itemize}

An almost 2-join $(X_1, X_2)$ is a \emph{2-join} when for $i\in\{1,2\}$, $X_i$
contains at least one path from $A_i$ to $B_i$, and if $|A_i|=|B_i|=1$
then $G[X_i]$ is not a chordless path.

We say that $(X_1,X_2,A_1,A_2,B_1,B_2)$ is a {\em split} of this
2-join, and the sets $A_1,A_2,B_1,B_2$ are the {\em special sets} of
this 2-join.  We often use the following notation:
$C_i = X_i\sm (A_i \cup B_i)$ (possibly, $C_i = \emptyset$).

We are ready to state the decomposition theorem from~\cite{twf-p2}.

\begin{theorem}[\cite{twf-p2}]\label{decomposeTW}
  If $G$ is (theta, wheel)-free, then $G$ is a line graph of a
  triangle-free chordless graph or a P-graph, or $G$ has a clique
  cutset or a 2-join.
\end{theorem}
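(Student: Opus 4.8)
The plan is to prove the equivalent statement: if $G$ is (theta, wheel)-free and has neither a clique cutset nor a 2-join, then $G$ is the line graph of a triangle-free chordless graph or a P-graph. Fix such a $G$; it is connected, since otherwise $\emptyset$ is a clique cutset. The first step I would take is to establish a family of \emph{attachment lemmas} governing how a vertex --- or a short path, or a small vertex set --- can see a hole or a big clique of $G$. The seed is immediate: for every hole $H$ and every $v\in V(G)\sm V(H)$, either $v$ has at most one neighbour in $V(H)$, or it has exactly two neighbours in $V(H)$ and they are adjacent. Indeed three neighbours in $V(H)$ form a wheel, and two non-adjacent neighbours $x,y\in V(H)$ form, together with the two $x$--$y$ subpaths of $H$ and the path $xvy$, a theta. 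Refinements of this --- for paths attached to holes, and for vertices attached to prisms or pyramids --- are the technical workhorse of the whole argument and would be proved up front.

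The second step is to show that $G$ is diamond-free: if $G$ contained an induced diamond $\{a,b,c,d\}$ with $cd\notin E(G)$, then the attachment lemmas would force enough structure around the edge $ab$ --- roughly, that $N(a)\cap N(b)$ behaves like a clique or like the interface of a 2-join --- to produce a clique cutset or a 2-join of $G$, contradicting the hypotheses. (This is worth isolating as a standalone lemma: every (theta, wheel)-free graph containing a diamond has a clique cutset or a 2-join.)

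Now dichotomise on claws. If $G$ is also claw-free then, being additionally wheel-free, $G$ is the line graph of a triangle-free chordless graph by Lemma~\ref{p1l2.4}, and we are done. So suppose $G$ contains a claw; then $G$ is not a line graph (line graphs are claw-free), and the goal is to show $G$ is a P-graph. For this one must reconstruct a $k$-skeleton $R$ and exhibit $G$ as the P-graph built from $R$. The scheme I would follow: locate the maximal induced subgraphs of $G$ that are line graphs of triangle-free chordless graphs and analyse their overlaps; the claws, together with diamond-freeness, should single out a clique $K$ of $G$ that must play the role of the special clique; deleting all but one vertex from each ``pendant class'' should recover a subgraph of the form $L(R)$, whose preimage under the line-graph operation is a candidate skeleton $R$. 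One then verifies the eleven skeleton axioms~(i)--(xi): each of them, when violated, produces a forbidden structure in $G$ or --- more often --- a clique cutset or a 2-join of $G$; for instance a pair of parallel branches or a failure of the 2-cutset condition~(iv) turns into a 2-join, a bad cut vertex as in condition~(iii) into a clique cutset, and the labelling conditions~(vi)--(xi) are exactly what is needed to avoid wheels, thetas, diamonds, and further clique cutsets or 2-joins.

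The main obstacle is this last case: the explicit reconstruction of the skeleton and the verification of all eleven axioms is where essentially all the difficulty lies, and it requires a long, careful analysis of how pendant vertices, the special clique, branches and limbs can sit inside a (theta, wheel)-free graph. Throughout, I would rely on Lemma~\ref{l:twoBranches} (so that a P-graph genuinely contains long branches, hence holes to attach to), on Lemmas~\ref{p1l2.4} and~\ref{p2l4.2}, on the attachment lemmas of the first step, and on the preliminary structure from Parts~I and~II; and the assumptions that $G$ has no clique cutset and no 2-join would be invoked at almost every step to eliminate degenerate configurations.
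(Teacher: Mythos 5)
First, note that this paper does not prove Theorem~\ref{decomposeTW} at all: it is imported verbatim from Part~II of the series (\cite{twf-p2}), so the only thing your proposal can be measured against is that external proof, which is itself a long paper. Your opening reductions are fine and consistent with what is known: the hole-attachment observation is correct, the diamond step is in fact available in a stronger form (Lemma~\ref{diamondCliqueCut} gives a clique cutset from a diamond in any wheel-free graph, so you need not produce a 2-join there), and once diamond-freeness and claw-freeness are both in hand, Lemma~\ref{p1l2.4} indeed closes the claw-free case.

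The genuine gap is the remaining case, which is the entire content of the theorem: showing that a (theta, wheel, diamond)-free graph that contains a claw and has no clique cutset and no 2-join is a P-graph. Your text for this case consists of intentions rather than arguments --- ``the claws, together with diamond-freeness, \emph{should} single out a clique $K$,'' ``deleting all but one vertex from each pendant class \emph{should} recover a subgraph of the form $L(R)$,'' and ``one then verifies the eleven skeleton axioms.'' None of these steps is carried out, and none is routine: identifying the special clique, reconstructing a skeleton $R$ satisfying conditions (i)--(xi), and showing that every failure of an axiom yields a theta, wheel, clique cutset or 2-join is precisely the hard structural analysis that occupies most of \cite{twf-p2} (and it is organized quite differently there, building on the only-prism and only-pyramid structure theorems of Part~I rather than on a direct skeleton reconstruction). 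As written, the proposal is a plausible plan with the main case left as a placeholder, so it cannot be accepted as a proof of Theorem~\ref{decomposeTW}.
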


We now describe how we decompose a graph from ${\cal C}$ into basic
graphs using the cutsets in the above theorem.

\subsection*{Decomposing with clique cutsets}

If a graph $G$ has a clique cutset $K$, then
its vertex set can be partitioned into sets $(A,K,B)$, where $A$ and $B$
are nonempty and   anticomplete.  We say that $(A,K,B)$
is a \emph{split} for the clique cutset $K$.  When $(A, K, B)$ is a
split for a clique cutset of a graph $G$, the {\em blocks of decomposition}
of $G$ with respect to $(A, K, B)$ are the graphs $G_A=
G[A\cup K]$ and $G_B= G[K \cup B]$.

A \emph{clique cutset decomposition tree} for a graph $G$ is a rooted
tree $T$ defined as follows.

\begin{itemize}
\item[(i)] The root of $T$ is $G$.
\item[(ii)] Every non-leaf vertex of $T$ is a graph $G'$ that contains a
  clique cutset $K'$ with split $(A', K', B')$. The children of $G'$
  in $T$ are the blocks of decomposition $G'_{A'}$ and $G'_{B'}$ of
  $G'$ with respect to $(A', K', B')$, and at least one of the graphs
  $G_{A'}'$ and $G_{B'}'$ do not admit a clique cutset.
  \item[(iii)] Every leaf of $T$ is a graph with no clique cutset.
  \item[(iv)] $T$ has at most $n$ leaves.
  \end{itemize}

\begin{theorem}[\cite{tarjan}]
  \label{th:tarjan}
  A clique cutset decomposition tree of an input graph $G$ can be
  computed in time $O(nm)$.
\end{theorem}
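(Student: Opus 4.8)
The statement is the classical theorem of Tarjan on decomposition by clique separators, and the plan is to reconstruct his algorithm and analysis~\cite{tarjan}. The first ingredient is a \emph{minimal triangulation}: a chordal graph $G^+$ on vertex set $V(G)$ with $E(G)\subseteq E(G^+)$ such that no set of edges strictly between $E(G)$ and $E(G^+)$ induces a chordal graph, produced together with a perfect elimination ordering $v_1,\dots,v_n$ of $G^+$. Such a pair is computed in time $O(nm)$ by the LEX-M procedure of Rose, Tarjan and Lueker. For each $i$, let $M_i=N_{G^+}(v_i)\cap\{v_{i+1},\dots,v_n\}$; by the perfect elimination property $M_i$ is a clique of $G^+$, and the role of minimality of the triangulation is precisely to guarantee that the sets $M_i$ which are cliques already in $G$ are the right candidates to catch all clique cutsets arising in the recursion below.

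Second, I would build the tree by ``peeling atoms'' along the ordering. Take $G$ as the root and as the initial \emph{current graph} $G'$, and scan $i=1,\dots,n$. At step $i$, if $v_i\in V(G')$, test whether $M_i$ is a clique of $G$ that is a cutset of $G'$ separating $v_i$ from some other vertex. If so, $M_i$ is a clique cutset of $G'$: letting $A$ be the vertex set of the component of $v_i$ in $G'\setminus M_i$ and $B=V(G')\setminus(A\cup M_i)$, make $G[A\cup M_i]$ and $G[M_i\cup B]$ the two children of $G'$ in $T$, replace $G'$ by $G[M_i\cup B]$, and use the key lemma of~\cite{tarjan} — which relies on the ordering coming from a \emph{minimal} triangulation — to conclude that $G[A\cup M_i]$ has no clique cutset, so it is a leaf. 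When the scan terminates, the final $G'$ is declared a leaf. Properties (i)--(iii) of the definition of a clique cutset decomposition tree hold by construction; for property (iv), observe that $T$ is a caterpillar whose internal nodes can each be charged to the vertex $v_i$ deleted from $G'$ at the corresponding step, so there are at most $n-1$ internal nodes and hence at most $n$ leaves.

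It remains to bound the overall running time by $O(nm)$. The triangulation phase is $O(nm)$. The peeling phase consists of at most $n$ scan steps; at each step one tests whether a set $M_i$ is a clique of $G$ and runs one connectivity search in $G'$, and — using that the blocks produced along the way are pairwise edge-disjoint outside their clique cutsets, and that each $M_i$ is re-examined only boundedly often — Tarjan's accounting bounds the total cost of the peeling phase by $O(nm)$ as well. I would reproduce that bookkeeping verbatim.

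The main obstacle is exactly this accounting together with the structural lemma behind correctness and the leaf bound: one must show that a minimal elimination ordering never needs its fill edges, so that the genuine clique cutsets met in the recursion are in bijection with distinct scan steps, and that the blocks split off do not overlap beyond their separators. Establishing that the candidate separators $M_i$ behave laminarly, and that the already-scanned side of each split is atomic, is the technical heart of the theorem; I would prove it by induction along the elimination ordering, following~\cite{tarjan}.
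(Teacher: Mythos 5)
Theorem~\ref{th:tarjan} is not proved in the paper at all --- it is imported by citation from Tarjan --- and your proposal is a faithful outline of exactly that source: a minimal elimination ordering via LEX-M, peeling atoms along the ordering using the sets $M_i$, the key lemma that the split-off block $G[A\cup M_i]$ (and the final remainder) has no clique cutset, the caterpillar-shaped tree giving at most $n$ leaves, and Tarjan's $O(nm)$ accounting. So the proposal is correct and takes essentially the same route as the paper, with the same technical heart (the atomicity lemma and the bookkeeping) deferred to \cite{tarjan} rather than re-proved.
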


Note that for a non-leaf vertex $G'$ of $T$, the corresponding
clique cutset $K'$ of $G'$ is also a clique cutset of $G$.
The following lemmas proved in \cite{twf-p1} will also be needed.

\begin{lemma}[\cite{twf-p1}]\label{diamondCliqueCut}
  If $G$ is a wheel-free graph that contains a diamond, then $G$ has a
  clique cutset.
\end{lemma}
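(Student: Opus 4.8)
The plan is to locate the clique cutset inside the common neighbourhood of the two tips of the diamond. Let $\{a,b,c,d\}$ induce a diamond in $G$ with $ad\notin E(G)$, so that $bc$ is its centre edge, and put $\Omega=N(a)\cap N(d)$; note $b,c\in\Omega$ and $b\sim c$. First I would show that $\Omega$ is a clique. Suppose $x,y\in\Omega$ are distinct and non-adjacent. Then $a\,x\,d\,y\,a$ is a hole of length $4$, since its only possible chords are $ad$ and $xy$, and both are non-edges. Any vertex $z\in\Omega\sm\{x,y\}$ is adjacent to $a$ and to $d$; if in addition $z$ were adjacent to $x$ or to $y$, then this hole together with $z$ would be a wheel. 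Hence every vertex of $\Omega\sm\{x,y\}$ is non-adjacent to both $x$ and $y$, so $x$ and $y$ are both isolated in $G[\Omega]$. But $G[\Omega]$ contains the edge $bc$, so $b$ is not isolated in $G[\Omega]$; therefore $b\notin\{x,y\}$ and $b\not\sim x$, which makes $\{b,x\}$ another non-edge of $G[\Omega]$, and the same argument now forces $b$ to be isolated in $G[\Omega]$ — a contradiction. So $\Omega$ is a clique.

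Next, if $G-\Omega$ is disconnected, then $\Omega$ is a clique cutset of $G$ and we are done (for instance $\Omega$ separates $a$ from $d$ whenever they lie in different components). So assume $G-\Omega$ is connected, and let $R=a\,r_1\cdots r_m\,d$ be a shortest path of $G-\Omega$ from $a$ to $d$; it is induced, and since no $r_i$ lies in $\Omega$ we have $m\ge 2$ (a path $a\,r_1\,d$ would put $r_1\in N(a)\cap N(d)=\Omega$). The plan from here is to derive a contradiction by producing a wheel. First, every $v\in\Omega$ has a neighbour among $r_1,\dots,r_m$: otherwise $a\,r_1\cdots r_m\,d\,v\,a$ is a hole, and any $w\in\Omega\sm\{v\}$ (which exists, as $b,c\in\Omega$) is adjacent to $a$, $d$ and $v$ on it, so $w$ is a hub.

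Finally I would analyse, for each $v\in\Omega$, its first and last neighbours along $R$. Choose $v\in\Omega$ whose first neighbour $r_f$ on $R$ has $f$ minimum. If $f\ge 2$, then $v\,a\,r_1\cdots r_f\,v$ is a hole (no chord is possible, by minimality of $f$ and since $R$ is induced), and to avoid a hub on it $r_f$ must be anticomplete to $\Omega\sm\{v\}$; reading the symmetric statement from the $d$-end and iterating, one is driven either straight to a wheel or to a configuration where a clique properly containing $\{b,c\}$ separates $G$. I expect the last step to be the main obstacle: one must control the degenerate cases in which some vertex of $\Omega$ is adjacent to $r_1$ or to $r_m$, so that the holes just described collapse to triangles. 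In that situation $\{a\}$, $\Omega$ and the relevant end-vertex of $R$ span a larger clique $K$, and one has to show either that $K$ is a clique cutset of $G$ or — using the minimality of $R$, or an induction on $|V(G)|$ — that it yields a shorter connecting path for a suitably chosen diamond, which then contradicts the choice of $R$ and closes the argument.
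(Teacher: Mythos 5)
Your preliminary steps are correct and well argued: $\Omega=N(a)\cap N(d)$ is indeed a clique in a wheel-free graph (the two-non-adjacent-vertices argument and the bootstrapping via the edge $bc$ both work), the reduction to the case where $G\setminus\Omega$ contains an $a$--$d$ path $R$ is fine, and so is the observation that every vertex of $\Omega$ must have a neighbour in the interior of $R$, as well as the claim that for a vertex $v\in\Omega$ with first neighbour $r_f$, $f\geq 2$, the vertex $r_f$ is anticomplete to $\Omega\setminus\{v\}$. (Note that this paper only quotes the lemma from Part~I of the series~\cite{twf-p1}; no proof is given here, so your argument has to stand on its own.)

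The final step, however, is a genuine gap and not merely a matter of unwritten details: the contradiction you aim for (a wheel forced by the existence of $R$) simply does not exist in general, so ``iterating'' cannot be driven ``straight to a wheel''. Consider the graph on $\{a,b,c,d,r_1,r_2\}$ in which $\{b,c,r_1,r_2\}$ is a clique, $a$ is adjacent exactly to $b,c,r_1$, and $d$ is adjacent exactly to $b,c,r_2$. This graph contains no hole at all (the neighbourhood of each of $a$ and $d$ is a clique, and everything else lies in the clique $\{b,c,r_1,r_2\}$), hence it is wheel-free, and $\{a,b,c,d\}$ is a diamond with $ad\notin E(G)$. Here $\Omega=\{b,c\}$ does \emph{not} separate $a$ from $d$: the path $R=a\,r_1\,r_2\,d$ avoids $\Omega$, both $b$ and $c$ have neighbours in its interior, all your claims (1)--(3) hold, and yet no wheel can be produced. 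The clique cutset promised by the lemma sits elsewhere, e.g.\ $N(a)=\{b,c,r_1\}$, which amounts to working with a different diamond such as $\{a,b,c,r_2\}$. So after your first three steps the proof must switch from ``derive a wheel'' to ``exhibit a (different) clique cutset'', which requires an extremal or inductive choice of the non-adjacent pair (or of the diamond) that you never make; this is precisely the part you yourself flag as the main obstacle, and it is where the real content of the lemma lies, so as it stands the proposal does not prove the statement.
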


A {\em star cutset} in a graph is a vertex cutset $S$ that contains a
vertex (called a {\em center}) adjacent to all other vertices of $S$. Note
that a nonempty clique cutset is a star cutset.

\begin{lemma}[\cite{twf-p1}]\label{Star=Clique}
  If $G\in\mathcal C$ has a star cutset, then $G$ has a clique cutset.
\end{lemma}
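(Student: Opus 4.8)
The plan is to argue by contradiction: suppose $G\in\mathcal C$ has a star cutset $S$ with center $c$, but $G$ has no clique cutset. Write $S=\{c\}\cup S'$ where $S'\subseteq N(c)$, and let the components of $G\setminus S$ be grouped into two nonempty anticomplete sets $A$ and $B$ (so $(A,S,B)$ witnesses that $S$ is a cutset). Since $G$ has no clique cutset, $S$ is not a clique; moreover $N[c]$ cannot be a cutset either (it contains the clique cutset-free obstruction only if $\{c\}\cup N(c)$ separates, but since $\{c\}$ alone is not a cutset we must be more careful). The key first step is to pass to a \emph{minimal} star cutset with center $c$: choose $S$ so that no proper subset of $S$ containing $c$ is a cutset separating some pair of the components. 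With this minimality we get that every vertex of $S'$ has a neighbor in $A$ and a neighbor in $B$.

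Next I would locate a theta or a wheel. Since $S'$ is not a clique (else $S$ is a clique cutset), pick nonadjacent $u,v\in S'$; both are adjacent to $c$. By minimality each of $u,v$ has neighbors in both $A$ and $B$. Build a shortest path $P_A$ from $u$ to $v$ with interior in $A$, and a shortest path $P_B$ from $u$ to $v$ with interior in $B$; these exist, are chordless, and are internally disjoint since $A$ and $B$ are anticomplete and disjoint from $S$. Together $P_A\cup P_B$ forms a hole $H$ through $u$ and $v$ (of length $\geq 4$ since $u\nsim v$). Now consider the center $c$: it is adjacent to both $u$ and $v$ on $H$. I would analyze $N(c)\cap V(H)$. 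If $c$ has at least three neighbors on $H$, then $H$ together with $c$ is a wheel, contradiction. So $c$ has exactly the two neighbors $u,v$ on $H$; but then $c$ gives a third $uv$-path (the path $u\,c\,v$) whose interior $\{c\}$ is disjoint from and anticomplete to the interiors of $P_A,P_B$ except for the edges $cu,cv$ — so $P_A$, $P_B$, and $ucv$ pairwise form holes, i.e. $u,v,P_A,P_B,c$ form a theta, contradiction.

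The main obstacle is the bookkeeping around how vertices of $S'$ other than $u,v$, and vertices of $S'$ adjacent to the interiors of $P_A$ or $P_B$, interact with the hole $H$ — in particular ensuring that the three paths really do pairwise induce holes (no unwanted chords) and that $c$ is not forced to have a third neighbor on $H$ for some \emph{other} choice of $u,v$. This is handled by the standard device of choosing $u,v$ and the paths $P_A,P_B$ to minimize total length, and by invoking Lemma~\ref{p1l2.4}-style chordality arguments together with wheel-freeness to rule out each chord: any chord from $S'\setminus\{u,v\}$ into the paths would, combined with $c$, either create a wheel on $H$ or a shorter configuration contradicting minimality. Once all chords are excluded we obtain either a theta or a wheel in $G$, contradicting $G\in\mathcal C$, which proves that $S$ must have been a clique cutset all along. \qed
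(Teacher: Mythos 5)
Your argument is correct: after passing to an inclusion-wise minimal star cutset $S$ containing the center $c$, every vertex of $S\setminus\{c\}$ has a neighbour in every component of $G\setminus S$, and two nonadjacent vertices $u,v\in S\setminus\{c\}$ joined by chordless paths through two different components give a hole on which $c$ has at least the two nonadjacent neighbours $u,v$, hence a wheel (if $c$ has a third neighbour on the hole) or a theta (if not) --- contradicting $G\in\mathcal C$, so a minimal star cutset must be a clique. This is essentially the argument of Part I, from which the present paper quotes the lemma without proof. The worries in your final paragraph are unnecessary: vertices of $S\setminus\{c,u,v\}$ do not belong to the induced configuration at all, and a third neighbour of $c$ on the hole is not an obstacle but exactly the wheel case of your own dichotomy, so no extra chord analysis or minimization is needed beyond choosing the two paths chordless.
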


\subsection*{Decomposing with 2-joins}

We first state some properties of 2-joins in graphs with no clique cutset.
Let $\mathcal D$ be the class of all graphs from $\mathcal C$ that do
not have a clique cutset. By Lemma~\ref{Star=Clique}, no graph from
$\mathcal D$ has a star cutset and by Lemma \ref{diamondCliqueCut} no
graph from $\mathcal D$ contains a diamond. Also, let
$\mathcal D_{\textsc{basic}}$ be the class of all basic graphs from
$\mathcal C$ that do not have a clique cutset.

An almost 2-join with a split $(X_1, X_2, A_1, A_2, B_1, B_2)$ in a
graph $G$ is \emph{consistent} if the following statements hold for
$i=1, 2$:

\begin{enumerate}
\item Every component of $G[X_i]$ meets both $A_i$, $B_i$.
\item Every vertex of $A_i$ has a non-neighbor in $B_i$.
\item Every vertex of $B_i$ has a non-neighbor in $A_i$.
\item Either both $A_1$, $A_2$ are cliques, or one of $A_1$ or $A_2$ is
  a single vertex, and the other one is a disjoint union of cliques.
\item Either both $B_1$, $B_2$ are cliques, or one of $B_1$, $B_2$ is
  a single vertex, and the other one is a disjoint union of cliques.
\item $G[X_i]$ is connected.
\item For every vertex $v$  in $X_i$, there exists a path in $G[X_i]$
  from $v$ to some vertex of $B_i$ with no internal vertex in $A_i$.
\item For every vertex $v$  in $X_i$, there exists a path in $G[X_i]$
  from $v$ to some vertex of $A_i$ with no internal vertex in $B_i$.
\end{enumerate}

Note that the definition contains redundant statements (for instance, (vi)
implies (i)), but it is convenient to list properties separately as above.

\begin{lemma}[\cite{twf-p1}]
  \label{l:consistent}
  If $G\in\mathcal D$, then
  every almost 2-join of $G$ is consistent.
\end{lemma}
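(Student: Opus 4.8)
The plan is to verify the eight conditions (i)--(viii) one at a time, each by contradiction. The ambient facts I would use are that $G\in\mathcal D$ is connected (a disconnected graph has the empty set as a clique cutset), has no clique cutset, has no star cutset (Lemma~\ref{Star=Clique}), and contains no diamond (Lemma~\ref{diamondCliqueCut} together with wheel-freeness), besides being theta-free and wheel-free. In every case the guiding principle is the same: a failure of the condition exposes a small cutset of $G$ --- typically the set $N(C)$ of vertices outside a suitable component $C$ of $G[X_i]$ (or of $G[X_i]$ with one vertex deleted) that have a neighbour in $C$, or one of the special sets $A_j,B_j$, or the neighbourhood of a single carefully chosen vertex --- and one shows that in $\mathcal D$ such a set must be a clique, or must contain a vertex adjacent to all of the others, giving a clique cutset or a star cutset; when it is neither, the complete bipartite pattern joining two special sets, together with one or two extra vertices and an induced path avoiding a prescribed pair of vertices, yields a theta, a wheel, or a diamond instead.

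To keep the deductions non-circular I would prove the conditions in a specific order, starting with (iv) and (v), the clique structure of the special sets on each side. For instance, if $A_1$ and $A_2$ both fail to be cliques, pick non-adjacent $a_1,a_1'\in A_1$ and non-adjacent $a_2,a_2'\in A_2$; then $a_1a_2a_1'a_2'$ is a $4$-hole, and using connectedness of $G$ and the absence of a clique or star cutset one routes a third induced path from $a_1$ to $a_1'$ that avoids $a_2$ and $a_2'$, completing a theta (or one exhibits a vertex with three neighbours on an appropriate hole, giving a wheel); the sub-cases where a special set is a single vertex, or where one of them should be a disjoint union of cliques, are handled similarly, now also using diamond-freeness. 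Next I would prove (ii) and (iii): if some $a\in A_1$ were complete to $B_1$, then $\{a\}\cup A_2\cup B_1$ is a star with centre $a$ that separates $(A_1\setminus\{a\})\cup C_1$ from $B_2\cup C_2$, which is a contradiction unless $A_1=\{a\}$ and $C_1=\emptyset$; in that residual configuration $X_1=\{a\}\cup B_1$ with $a$ complete to $B_1$, and one argues directly on this very small graph, using (iv)--(v) and a short theta/wheel/diamond/clique-cutset analysis.

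Finally I would settle the connectivity and reachability conditions. Since (vi) implies (i), it is enough to prove (vi): if $G[X_1]$ were disconnected with a component $C$, then $N(C)\subseteq A_2\cup B_2$, and according to whether $C$ meets only $A_1$, only $B_1$, or both, this set --- after possibly adjoining one vertex of $C$ chosen complete to $A_2$ or to $B_2$ --- becomes a clique cutset or a star cutset by (iv) and (v), with the $|C|=1$ cases pushed through by hand (here one sometimes finds that $G$ actually has a cut vertex, hence a clique cutset). Conditions (vii) and (viii) then follow by applying the same idea to $G[X_1]$ with a vertex of $A_1$, respectively $B_1$, deleted, together with connectedness of $G$.

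I expect the main obstacle to be conditions (iv) and (v), and in particular the book-keeping of degenerate configurations in which a special set has very small size: the join between two non-clique special sets by itself only yields a $4$-hole, so turning it into a forbidden theta or wheel requires producing an extra induced path avoiding a prescribed pair of vertices, or an extra suitably placed vertex, and proving such a path or vertex exists relies on connectedness and on the absence of clique and star cutsets --- facts one must be careful to use only in their already-established form, rather than the very connectivity statements (i)/(vi) that are still being proved. A related trap is making sure the cutsets produced are genuine \emph{star} cutsets, with the centre lying inside the cutset, since only those are excluded by Lemma~\ref{Star=Clique}; the neighbourhood of a vertex that sits outside the cutset does not qualify, and dealing with that situation is precisely what forces the extra layer of case analysis in each condition.
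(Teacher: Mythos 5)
This lemma is not proved in the present paper at all: it is imported verbatim from Part~I of the series (\cite{twf-p1}), so there is no in-paper argument to compare against, and your proposal has to stand on its own as a proof. Judged that way, it is a plan in the right genre --- contradiction via clique cutsets, star cutsets, diamond-freeness and the forbidden theta/wheel, with the conditions proved in a carefully chosen order --- but the decisive steps are exactly the ones you leave as placeholders, and some of the reductions you do spell out are not sound as stated. The central gap is in (iv)/(v): from nonadjacent $a_1,a_1'\in A_1$ and $a_2,a_2'\in A_2$ you get a $4$-hole, but "one routes a third induced path from $a_1$ to $a_1'$ avoiding $a_2$ and $a_2'$" is precisely the content of the lemma, and neither connectedness nor the absence of a star cutset delivers it directly: the non-cutset property of $N[v]$ yields a path avoiding the closed neighbourhood of a \emph{single} vertex, not one avoiding two prescribed vertices, and you must in addition control the adjacencies of the path's interior to the rest of $A_1\cup A_2$ (any interior vertex of $A_1$ is complete to $\{a_2,a_2'\}$, any interior vertex of $A_2$ to $\{a_1,a_1'\}$), otherwise the union is a wheel-or-nothing rather than a theta; none of this bookkeeping is supplied, and you cannot lean on the reachability conditions (vi)--(viii), which in your ordering are proved later.

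Two further concrete problems. In your treatment of (ii), the residual case $A_1=\{a\}$, $C_1=\emptyset$ is not an analysis "on this very small graph": $X_1=\{a\}\cup B_1$ is small, but the contradiction has to be extracted from the arbitrary side $X_2$ (for instance, diamond-freeness forces $B_1$ to be stable there, and one then needs an induced path of $G[X_2]$ from $A_2$ to $B_2$ to close a theta through $a$ and two vertices of $B_1$ --- the existence and induced-ness of such a path is again nontrivial at this stage). And in (vi), when $A_1$ is a single node and $A_2$ is only a disjoint union of cliques, the set $N(C)$ is neither a clique nor a star with its centre inside the cutset; your fix of adjoining a vertex of $C$ complete to $A_2$ works only when $C$ has at least two vertices, and the case $C=A_1=\{a_1\}$ (where $N[a_1]$ does not even separate, since the $B_1$--$B_2$ edges survive) needs a separate argument you do not indicate. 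So the proposal captures the spirit of the Part~I proof but, as written, it has genuine gaps at the points where the real work lies.
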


By this lemma every 2-join of a graph of $\mathcal D$ is consistent.

We now define the blocks of decomposition of a graph with respect to a
2-join.  Let $G$ be a graph and $(X_1, X_2,A_1,A_2,B_1,B_2)$ a split of a 2-join of $G$.
Let $k_1$ and $k_2$ be positive integers. The
\emph{blocks of decomposition} of $G$ with respect to $(X_1, X_2)$ are
the two graphs $G_1^{k_1}$ and $G_2^{k_2}$ that we describe now.  We obtain $G_1^{k_1}$
from $G$ by replacing $X_2$ by a \emph{marker path} $P_2= a_2 \ldots
b_2$ of length $k_1$, where $a_2$ is a vertex complete to $A_1$, $b_2$ is a vertex complete
to $B_1$, and  $V(P_2)\setminus \{ a_2,b_2\}$ is anticomplete to $X_1$.  The
block $G_2^{k_2}$ is obtained similarly by replacing $X_1$ by a marker path
$P_1 = a_1\ldots b_1$ of length $k_2$.

In \cite{twf-p2} the blocks of decomposition w.r.t.\ a 2-join that we used in construction of a
recognition algorithm had marker paths of length 2. In this paper we will use blocks whose
marker paths are of length 3. So, unless otherwise stated, when we say that $G_1$ and
$G_2$ are blocks of decomposition w.r.t.\ a 2-join we will mean that their marker paths are of length~3.

\begin{lemma}[\cite{twf-p1}]
  \label{l:keepKfree}
  Let $G$ be a graph with a consistent 2-join $(X_1, X_2)$ and $G_1$,
  $G_2$ be the blocks of decomposition with respect to this 2-join whose marker
  paths are of length 2. Then the following hold:
  \begin{itemize}
 \item[(i)] $G$ has no clique cutset if and only if $G_1$ and $G_2$ have
  no clique cutset.
  \item[(ii)] $G\in {\cal C}$ if and only if $G_1$ and $G_2$ are in ${\cal C}$.
  \end{itemize}
\end{lemma}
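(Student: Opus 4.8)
My plan is to prove both equivalences by analysing how clique cutsets, thetas and wheels can meet the 2-join; the two parts are essentially independent, and I would use the consistency conditions throughout. Write $P_2=a_2c_2b_2$ for the length-$2$ marker path of $G_1$ and $P_1=a_1c_1b_1$ for that of $G_2$, and record at the outset that $c_2$ has degree $2$ in $G_1$ with $N_{G_1}(c_2)=\{a_2,b_2\}$, that $a_2b_2\notin E(G_1)$, and that $a_2$ (resp.\ $b_2$) is complete to $A_1$ (resp.\ $B_1$) and anticomplete to the rest of $X_1$; symmetrically for $P_1$.

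For (i) I would argue each direction by contraposition. Suppose $G_1$ has a clique cutset $K_1$. Since $G[X_1]$ is connected (consistency~(vi)) and $A_1,B_1\neq\emptyset$, deleting $c_2$ cannot disconnect $G_1$, so $c_2\notin K_1$ and hence $K_1\subseteq X_1$, $K_1\subseteq\{a_2\}\cup A_1$, or $K_1\subseteq\{b_2\}\cup B_1$. If $K_1\subseteq X_1$, then $P_2$ lies in a single component of $G_1\setminus K_1$, so a further component of $G_1\setminus K_1$ meets neither $A_1$ nor $B_1$ and therefore survives, isolated, in $G\setminus K_1$; thus $K_1$ is a clique cutset of $G$. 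If $K_1=\{a_2\}\cup K$ with $K\subseteq A_1$ a clique, I would use consistency~(iv): either $A_2$ is a clique, in which case $A_2\cup K$ is a clique cutset of $G$; or $A_1$ is a single vertex, in which case one checks via (vi),(vii) that $K_1$ was not a cutset of $G_1$ after all. The case $K_1\subseteq\{b_2\}\cup B_1$ is symmetric via (v),(viii). Conversely, suppose $G$ has a clique cutset $K$. A clique of $G$ meeting both $X_1$ and $X_2$ must lie in $A_1\cup A_2$ or in $B_1\cup B_2$, and such a clique is never a cutset: by (vii) (resp.\ (viii)), after deleting it every vertex of $X_i$ still reaches $B_i$ (resp.\ $A_i$), and since $B_1$ is complete to $B_2$ (resp.\ $A_1$ to $A_2$) the graph stays connected. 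Hence $K\subseteq X_1$, say, and then $K$ is also a clique cutset of $G_1$, because $P_2$ attaches only to $A_1\cup B_1$, which lies in the component of $G_1\setminus K$ containing the image of $X_2$.

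For (ii) the tool is marker-path replacement. First the ``easy'' direction: given a theta or wheel $H$ in $G_1$, I want one in $G$. If $V(H)\cap V(P_2)=\emptyset$, then $H\subseteq G[X_1]$. Otherwise I would note that $c_2$ cannot lie in a wheel (its two neighbours $a_2,b_2$ are non-adjacent, incompatible with being a hub or a rim vertex), so if $c_2\in V(H)$ then $H$ is a theta, $a_2c_2b_2$ is a subpath of one of its three paths, and $V(H)\cap A_1=N_H(a_2)\setminus\{c_2\}$, $V(H)\cap B_1=N_H(b_2)\setminus\{c_2\}$. I would then replace $a_2,c_2,b_2$ by an induced path of $G[X_2]$ from a vertex $a'\in A_2$ to a vertex $b'\in B_2$ with interior in $C_2$ and $a'\not\sim b'$; the theta structure forces such a path to exist (two distinct, hence non-adjacent, neighbours of $a_2$ in $A_1$ force, by (iv), that $A_2$ is a single vertex, and then (vii),(ii) supply the path; symmetrically on the $B$-side via (v),(viii),(iii)). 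If instead $c_2\notin V(H)$ but $a_2$ and/or $b_2$ lies in $H$, I would simply swap $a_2$ for a vertex of $A_2$ and $b_2$ for a vertex of $B_2$ (non-adjacent, by (ii), if both occur). In every case the new graph is an induced theta or wheel of $G$. For the reverse direction, let $H$ be a theta or wheel of $G$ meeting both $X_1$ and $X_2$. The crucial step is that $H$ meets the 2-join ``thinly''. I would first prove, by a chord-count, that a chordless path of $G$ uses at most two edges between $X_1$ and $X_2$, and that two such edges are of opposite types ($A_1$--$A_2$ and $B_1$--$B_2$) unless they share their endpoint in $X_2$, which is then the only vertex of a singleton $A_2$ or $B_2$ (a third crossing edge, or two of the same type with a nontrivial segment between them, creates a chord via the complete bipartite adjacencies). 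Applying this to the hole(s) inside $H$: a crossing hole of $H$ uses exactly two crossing edges, so its trace on each side is an induced path (or one vertex) whose only neighbours on the opposite side are forced — extra ones would be hole chords. Bootstrapping to all of $H$ (a wheel's hub is one vertex; a theta's third path is again a chordless path, hence crosses at most twice), I would conclude that, after possibly swapping $X_1\leftrightarrow X_2$, $V(H)\cap X_2$ is an induced path attached to $V(H)\cap X_1$ exactly as $P_2$ is (touching it only at one vertex of $A_1$ and one of $B_1$), or $V(H)\cap X_2$ is a single vertex of $A_2$ or $B_2$ with the symmetric statement on the $X_1$ side. Replacing that path by $P_2$ (or passing to $G_2$ in the degenerate case) then yields the desired theta or wheel in $G_1$ (or $G_2$).

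The hard part will be the ``thinly'' claim in the last direction: promoting the chord-count for one chordless path to a global statement about a theta or a wheel requires a careful case analysis of where the few high-degree vertices of $H$ sit relative to the sets $A_i,B_i$, together with attention to the degenerate configurations that consistency conditions~(iv) and~(v) permit (a singleton $A_i$ or $B_i$ with two ``parallel'' crossing edges), in which $H$ migrates from $G_1$ to $G_2$. Part~(i) is comparatively routine, but it still needs (vi)--(viii) to rule out ``diagonal'' cutsets and (iv),(v) for the degenerate marker-path cases.
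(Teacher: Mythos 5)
The paper itself does not prove this lemma (it is quoted from Part~I of the series), so I judge your argument on its own merits. Your part~(i) is sound: the case split via consistency~(iv)--(viii) on where the clique cutset of a block sits, and the observation that a clique of $G$ meeting both $X_1$ and $X_2$ is never a cutset by (vii)/(viii), can be checked and works. The first flaw is in the forward half of~(ii): your claim that $c_2$ cannot lie in a wheel is false. A rim vertex that is not adjacent to the hub has exactly two neighbours in the wheel, and these are non-adjacent (consecutive vertices of a hole of length at least~$4$), so a degree-$2$ vertex with non-adjacent neighbours is perfectly compatible with being a rim vertex; hence a wheel of $G_1$ through $c_2$ is not excluded, and your reduction ``if $c_2\in V(H)$ then $H$ is a theta'' has a hole. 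This case is repairable by the same segment replacement you describe (the hub lies in $X_1$ and keeps its rim-degree), but as written it is missing. (Minor: where you invoke (ii) to get $a'\not\sim b'$ when $A_2$ is a singleton, the condition you actually need is (iii).)

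The serious gap is the backward half of~(ii), and there your preparatory claims are false as stated, so the admitted ``hard part'' is not merely unfinished detail. A chordless path of $G$ can use \emph{three} crossing edges: take $x,z\in A_1$ non-adjacent (so by (iv) $A_2=\{y\}$ and by (iii) $y$ is anticomplete to $B_2$), $w\in C_1$, $b_1\in B_1$, $b_2\in B_2$; the path $x\,y\,z\,w\,b_1\,b_2$ is chordless and crosses via $xy$, $yz$ and $b_1b_2$. Likewise a hole can use \emph{four} crossing edges: the $6$-hole $x\,y\,z\,b_1\,b_2\,b_1'$ with $A_2=\{y\}$ and $B_2=\{b_2\}$ is chordless and compatible with (i)--(viii). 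Hence your asserted normal form for the trace --- $V(H)\cap X_2$ an induced path attached to $V(H)\cap X_1$ as $P_2$ is, or a single vertex --- is not exhaustive: the trace of a rim can be two non-adjacent vertices, one in $A_2$ and one in $B_2$, and the hub of a wheel can itself lie in $A_2\cup B_2$ while the rim pinches through the other special set. In such configurations ``replace the trace by $P_2$'' is not literally the right operation; one must map the $A_2$-vertex to $a_2$ and the $B_2$-vertex to $b_2$ and re-verify hub degrees and chordlessness, using (ii)/(iii) to rule out hub--rim adjacencies that would be lost in the block. This global case analysis over the positions of the high-degree vertices of $H$ relative to $A_1,A_2,B_1,B_2$ (including the degenerate singleton cases) is precisely the content of the implication ``$G_1,G_2\in\mathcal C \Rightarrow G\in\mathcal C$''; you explicitly defer it, so the proposal does not prove~(ii).
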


\begin{lemma}
  \label{new2}
  Let $G$ be a graph from ${\cal D}$. Let $(X_1, X_2)$ be a 2-join of $G$, and $G_1$,
  $G_2$ the blocks of decomposition with respect to this 2-join whose marker
  paths are of length at least~2. Then
  $G_1$ and $G_2$ are in ${\cal D}$ and they do not have star cutsets.
\end{lemma}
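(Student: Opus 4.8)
The plan is to bootstrap from the length-$2$ case, which is already available. Since $G\in\mathcal D$, Lemma~\ref{l:consistent} shows the $2$-join $(X_1,X_2)$ is consistent, so Lemma~\ref{l:keepKfree} applies and the blocks with marker paths of length~$2$ lie in $\mathcal C$ and have no clique cutset, i.e.\ they are in $\mathcal D$. By the symmetry between the two sides of the $2$-join it then suffices to prove that $G_1^{k}\in\mathcal D$ for every $k\ge 2$, and I would argue this by induction on $k$, the case $k=2$ being the base case just described. The ``no star cutset'' conclusion I would leave for the end: once $G_i^{k_i}\in\mathcal D$ is known, it is in $\mathcal C$ and has no clique cutset, so Lemma~\ref{Star=Clique} immediately gives that it has no star cutset.

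For the inductive step I would use that $G_1^{k+1}$ is isomorphic to the graph obtained from $G_1^{k}$ by subdividing once the marker edge incident to $a_2$: writing $z$ for the new (degree-$2$) vertex, $x=a_2$ and $y$ for the marker vertex that is the other neighbour of $z$, in $G_1^{k+1}$ we have $N(z)=\{x,y\}$, $xy\notin E$, and $y$ has degree~$2$ with its other neighbour a marker vertex non-adjacent to $x$. The structural observation that drives the argument is that \emph{every marker edge of every block lies in no triangle}: one of its endpoints has degree~$2$ and the two neighbours of that endpoint are non-adjacent.

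For ``no clique cutset'': if $K$ were a clique cutset of $G_1^{k+1}$ then $z\notin K$, because the only cliques through $z$ are $\{z\}$, $\{z,x\}$ and $\{z,y\}$, and removing any one of them fails to disconnect $G_1^{k+1}$ (the subgraph induced by $X_1$ is connected by consistency, and the rest of the marker path stays attached to it). Hence $K\subseteq V(G_1^{k})$, where it is still a clique, and at most one of $x,y$ lies in $K$ since $xy$ is a non-edge of $G_1^{k+1}$; a short case analysis according to whether $|K\cap\{x,y\}|$ is $0$ or $1$ then shows $G_1^{k}-K$ is still disconnected --- going from $G_1^{k+1}-K$ to $G_1^{k}-K$ merely deletes $z$, either as a pendant vertex or by suppressing it along the new edge $xy$ --- contradicting the induction hypothesis. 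For ``in $\mathcal C$'': suppose $G_1^{k+1}$ contains a theta or a wheel, on vertex set $F$. If $z\notin F$, then also $y\notin F$: since $\deg(y)=2$ and $z\in N(y)$, the vertex $y$ cannot be the hub of the wheel, and if $y$ belonged to the theta, or to the hole of the wheel, then its second neighbour $z$ would have to belong there as well, which is false; hence $G_1^{k}[F]=G_1^{k+1}[F]$ and the theta or wheel already lives in $G_1^{k}$. If $z\in F$, then $z$, having degree~$2$, is internal to one of the paths of the theta or lies on the hole of the wheel, and contracting $z$ --- replacing $x\!-\!z\!-\!y$ by the edge $xy$ --- yields a theta or a wheel in $G_1^{k}$ unless the contraction shrinks some hole to a triangle; but such a triangle would contain the marker edge $xy$, contradicting the structural observation. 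Either way $G_1^{k}\notin\mathcal C$, a contradiction.

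This completes the induction, so $G_1^{k_1}$ and $G_2^{k_2}$ are in $\mathcal D$, and the star-cutset statement follows as indicated. I expect the genuine obstacle to be the ``in $\mathcal C$'' part of the inductive step, where one must ensure that lengthening a marker path never manufactures a short hole; this is exactly what the triangle-freeness of marker edges and the degree-$2$ structure of marker-path interiors are there to guarantee, whereas the clique-cutset step and the reduction to the length-$2$ case are routine.
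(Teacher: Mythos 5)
Your proposal is correct and follows essentially the same route as the paper: reduce to the length-$2$ blocks via Lemmas~\ref{l:consistent} and~\ref{l:keepKfree}, observe that passing to longer marker paths amounts to subdividing a marker edge with a degree-$2$ endnode, argue this creates no clique cutset, theta or wheel, and finish with Lemma~\ref{Star=Clique}. The only difference is that you spell out in detail the subdivision step that the paper asserts in a single sentence, and your case analysis there is sound.
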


\begin{proof}
By Lemma \ref{l:consistent}, $(X_1,X_2)$ is consistent.
Let $G_1'$ and $G_2'$ be blocks of decomposition w.r.t.\ $(X_1,X_2)$ whose marker paths are of length 2.
Then for $i\in \{ 1,2\}$, $G_i$ is obtained from $G_i'$ by subdividing (0 or several times) an edge of its marker path.
Subdividing an edge whose one endnode is of degree 2 cannot create a clique cutset, nor a theta, nor a wheel,
and hence the result follows from Lemma \ref{l:keepKfree} and Lemma \ref{Star=Clique}.
\end{proof}

A 2-join $(X_1,X_2)$ of $G$ is a {\em minimally-sided 2-join} if for some $i\in \{ 1,2 \}$ the following holds: for every 2-join $(X_1',X_2')$ of $G$, neither $X_1'\subsetneq X_i$ nor $X_2'\subsetneq X_i$. In this case $X_i$ is a {\em minimal side} of this minimally-sided 2-join.

A 2-join $(X_1,X_2)$ of $G$ is an {\em extreme 2-join} if for some $i\in \{ 1,2 \}$ and all $k\geq 3$ the block of decomposition $G_i^k$ has no 2-join.
In this case $X_i$ is an {\em extreme side} of such a 2-join.

Graphs in general do not necessarily have extreme 2-joins (an example is given in \cite{nicolas.kristina:two}), but it is shown in  \cite{nicolas.kristina:two}
that graphs with no star cutset do. It is also shown in \cite{nicolas.kristina:two} that if $G$ has no star cutset then the blocks of decomposition w.r.t.\ a 2-join
whose marker paths are of length at least 3, also have no star cutset. This is then used to show that in a graph with no star cutset, a minimally-sided 2-join is extreme.
We summarize these results in the following lemma.

\begin{lemma}[\cite{nicolas.kristina:two}]\label{extreme}
Let $G$ be a graph with no star cutset. Let
$(X_1,X_2,A_1,A_2,B_1,B_2)$ be a split of a minimally-sided 2-join of
$G$ with $X_1$ being a minimal side, and let $G_1$ and $G_2$ be the
corresponding blocks of decomposition whose marker paths are of length at least 3. Then the following hold:
\begin{enumerate}
\item $|A_1|\geq 2$, $|B_1|\geq 2$, and in particular all the vertices of $A_2\cup B_2$ are of degree at least~3.
\item If $G_1$ and $G_2$ do not have star cutsets, then $(X_1,X_2)$ is an extreme 2-join, with $X_1$ being an extreme side (in particular, $G_1$ has no 2-join).
\end{enumerate}
\end{lemma}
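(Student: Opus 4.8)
The plan is to prove both parts by contradicting the choice of $(X_1,X_2)$: since it is a \emph{minimally-sided} $2$-join with minimal side $X_1$, no $2$-join of $G$ has a side strictly contained in $X_1$, so in each part I would exhibit exactly such a $2$-join. Throughout I would use the standard structural properties of $2$-joins in star-cutset-free graphs established in \cite{nicolas.kristina:two} (in essence the consistency of Lemma~\ref{l:consistent}): $G[X_i]$ is connected, it is not a chordless path when $|A_i|=|B_i|=1$, and every vertex of $B_i$ has a non-neighbour in $A_i$, and symmetrically.

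For~(1), assume by symmetry that $|A_1|=1$, say $A_1=\{a\}$. The idea is to push $a$ to the other side of the $2$-join: set $X_1'=X_1\setminus\{a\}$, $X_2'=X_2\cup\{a\}$, with special sets $A_1'=N(a)\cap X_1$, $A_2'=\{a\}$, $B_1'=B_1$, $B_2'=B_2$. The only edges between $X_1'$ and $X_2'$ are the old $B_1$--$B_2$ edges and the edges from $N(a)\cap X_1$ to $a$, so the complete-bipartite pattern of a $2$-join is present; $A_1'$ is nonempty since $G[X_1]$ is connected with $|X_1|\ge3$, and $A_1'\cap B_1'=\emptyset$ since every vertex of $B_1$ misses $a$. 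The remaining $2$-join axioms for $(X_1',X_2')$ I would check directly: $|X_1'|\ge3$, because $|X_1|=3$ together with $A_1=\{a\}$ would force $G[X_1]$ to be a chordless path from $a$ to $B_1$ with $|A_1|=|B_1|=1$ (forbidden for a $2$-join); the required $A_1'$--$B_1'$ path in $G[X_1']$, because every component of $G[X_1\setminus\{a\}]$ contains a neighbour of $a$; and the ``not a chordless path'' conditions, since any violation, combined with the connectedness of $G[X_1]$ and $G[X_2]$, again forces $G[X_1]$ or $G[X_2]$ to be a chordless path with both special sets singletons. Then $X_1'\subsetneq X_1$ contradicts minimality of $X_1$, so $|A_1|\ge2$, and symmetrically $|B_1|\ge2$. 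For the last assertion, any $v\in A_2$ is adjacent to the (at least two) vertices of $A_1$ and, since $G[X_2]$ is connected on at least three vertices, to a further vertex inside $X_2$, so $\deg_G(v)\ge3$; symmetrically for $B_2$.

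For~(2), I would show that a $2$-join of any block $G_1^{k}$ with $k\ge3$ produces a $2$-join of $G$ having a side strictly inside $X_1$. First I would invoke the result of \cite{nicolas.kristina:two} (the analogue, for star-cutset-free graphs, of Lemma~\ref{new2}) that $G_1^{k}$ still has no star cutset, so that every $2$-join $(Y_1,Y_2)$ of $G_1^{k}$ is consistent. The key step is a case analysis of how the special sets of $(Y_1,Y_2)$ can meet the marker path $P_2=a_2\dots b_2$: its interior vertices have degree $2$ in $G_1^{k}$, and there are at least two of them since $k\ge3$, so an interior vertex of $P_2$ lying in a special set forces the opposite special set to consist of exactly its two neighbours on $P_2$; propagating this along $P_2$ shows that the $2$-join can meet $P_2$ only in the way an honest $2$-join of a path can (the path lies wholly on one side, or a subpath of it realises a special-set pair). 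In each resulting configuration one ``un-collapses'' $X_2$ in place of $P_2$ (with $a_2$ taking the role of $A_2$ and $b_2$ that of $B_2$) and checks that the outcome is a split of a $2$-join $(Z_1,Z_2)$ of $G$ whose side avoiding $X_2$ is a nonempty proper subset of $X_1$ --- proper because $(V(P_2),X_1)$ is not itself a $2$-join (the chordless path $P_2$ again having singleton special sets), so $Y_1$ must strictly contain $V(P_2)$. This contradicts minimality of $X_1$; hence no block $G_1^{k}$ with $k\ge3$ has a $2$-join, that is, $(X_1,X_2)$ is extreme with $X_1$ an extreme side.

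The main obstacle is the case analysis in~(2): pinning down exactly how a $2$-join of $G_1^{k}$ can meet the marker path, and then verifying in each case that replacing the path by $X_2$ really yields a genuine $2$-join of $G$ (satisfying the size, path, and ``not a chordless path'' conditions) rather than a degenerate configuration --- this is precisely the point at which the length-at-least-$3$ hypothesis on the marker paths is used, and it is the technical heart of the argument carried out in \cite{nicolas.kristina:two}. A secondary and routine irritation in both parts is disposing of the small degenerate cases (a side or a newly formed special set of size exactly $3$, a marker endpoint landing in a special set, and the like), each handled by appealing back to the definition of a $2$-join.
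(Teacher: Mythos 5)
First, a point of comparison: the paper does not prove Lemma~\ref{extreme} at all --- it is imported from \cite{nicolas.kristina:two} (hence the bracketed citation), and the surrounding text only summarizes the route taken there (blocks of decomposition with marker paths of length at least~3 preserve star-cutset-freeness, and this is used to show that a minimally-sided 2-join is extreme). So your proposal cannot be checked against an internal proof; it has to stand on its own, and as written it has gaps.

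In part (1), your verification of the ``not a chordless path'' condition is incorrect as stated: if $|A_1'|=|B_1'|=1$ and $G[X_1\setminus\{a\}]$ is a chordless path, the vertex $a$ may attach to an \emph{internal} vertex of that path, in which case $G[X_1]$ is a subdivided claw rather than a chordless path, and no contradiction with the validity of the original 2-join arises. The configuration can still be excluded, but only by an additional star-cutset argument about the end of the path that carries no cross edges (the closed neighbourhood of its unique neighbour is a star cutset), which your write-up does not contain. Also, your blanket appeal to ``consistency'' leans on a statement the present paper proves only for $G\in\mathcal D$ (Lemma~\ref{l:consistent}); for an arbitrary graph with no star cutset the properties you use (every vertex of $B_1$ misses the unique vertex of $A_1$, connectedness of $G[X_1]$ and $G[X_2]$, the path conditions) are themselves nontrivial lemmas of \cite{nicolas.kristina:two} and must be invoked as such, not as facts already available here. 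Most importantly, part (2) is only a plan: the case analysis of how a 2-join of $G_1^{k}$ can meet the marker path, and the verification that un-collapsing the path into $X_2$ yields a genuine 2-join of $G$ with a side strictly contained in $X_1$, is precisely the technical heart, and you explicitly defer it to \cite{nicolas.kristina:two} rather than carrying it out. As a reconstruction of the reference's strategy your outline is reasonable (the lifting idea, and the observation that $(V(P_2),X_1)$ cannot itself be a 2-join because $P_2$ is a chordless path with singleton special sets, are the right ingredients), but as a proof it is incomplete.
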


The following simple lemma is useful and not proved in the previous papers of
the series.

\begin{lemma}
  \label{l:2joinDeg2}
  Let $G$ be in $\mathcal D$.  Let $(X_1,X_2,A_1,A_2,B_1,B_2)$ be a
  split of a minimally-sided 2-join of $G$ with $X_1$ being a minimal
  side, and let $G_1$ and $G_2$ be the corresponding blocks of
  decomposition. If the block of decomposition $G_1$ is a P-graph,
  then $X_1$ contains a vertex that has degree~2 in $G$.
\end{lemma}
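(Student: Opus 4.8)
The plan is to use Lemma~\ref{l:twoBranches}, which guarantees that the P-graph $G_1$ contains two distinct branches of length at least~2, each of which contains a vertex of degree~2 in $G_1$. The key point is that the marker path $P_2 = a_2\ldots b_2$ of $G_1$ has length~3, so its internal vertices all have degree~2 in $G_1$ and lie on a single branch. Since $G_1$ has two such branches, at least one branch, call it $B$, avoids all internal vertices of $P_2$; in particular $B$ contains a vertex $v$ of degree~2 in $G_1$ with $v\notin V(P_2)$, and hence $v\in X_1$. First I would make this accounting precise: argue that any branch of $G_1$ that is not the one carrying the internal vertices of $P_2$ is entirely contained in $X_1$ except possibly for its endnodes, and that its internal degree-2 vertex is a genuine vertex of $G$.

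Next I would transfer the degree-2 property from $G_1$ to $G$. The only vertices of $G_1$ whose neighborhoods differ from their neighborhoods in $G$ are the endnodes $a_2$ and $b_2$ of the marker path (which are complete to $A_1$ and $B_1$ respectively, replacing the adjacencies to $A_2\cup B_2$), and the internal vertices of $P_2$, which are not in $G$ at all. So if $v\in X_1$ has degree~2 in $G_1$ and $v$ is not one of $a_2,b_2$, then $v$ has exactly the same neighbors in $G$ as in $G_1$, hence degree~2 in $G$. I would therefore need to check that the degree-2 vertex $v$ produced above can be chosen distinct from $a_2$ and $b_2$: since $v$ is an internal vertex of a branch $B$ of $G_1$ and $a_2,b_2$ each have degree at least~$|A_1|+1\geq 3$ and $|B_1|+1\geq 3$ in $G_1$ respectively (using Lemma~\ref{extreme}(1) with $X_1$ minimal, which gives $|A_1|\geq 2$ and $|B_1|\geq 2$), neither $a_2$ nor $b_2$ can be a degree-2 vertex of $G_1$, so $v\notin\{a_2,b_2\}$ automatically.

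The main obstacle is the bookkeeping around which branch of $G_1$ hosts the marker path: one must be sure that $P_2$, being a single path of length~3, is contained in (or forms part of) at most one branch of the P-graph $G_1$, so that the \emph{second} branch guaranteed by Lemma~\ref{l:twoBranches} is disjoint from the interior of $P_2$. This requires recalling the structure of P-graphs — that $G_1$ edgewise partitions into branches and limbs of its skeleton, and that a path whose internal vertices all have degree~2 lies inside a single branch (or limb) — together with the observation that $a_2$ and $b_2$, having degree at least~3 in $G_1$, are branch vertices of the underlying skeleton structure, so $P_2$ is exactly one branch. Once that is pinned down, the rest is the routine neighborhood comparison between $G$ and $G_1$ described above, and we conclude that $v\in X_1$ has degree~2 in $G$, as required.
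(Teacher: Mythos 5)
There is a genuine gap in your second step. Your claim that ``the only vertices of $G_1$ whose neighborhoods differ from their neighborhoods in $G$ are the endnodes $a_2$ and $b_2$ of the marker path and the internal vertices of $P_2$'' is false: every vertex of $A_1$ (resp.\ $B_1$) also changes neighborhood, since in $G$ it is adjacent to \emph{all} of $A_2$ (resp.\ $B_2$), whereas in $G_1$ these adjacencies are replaced by the single edge to $a_2$ (resp.\ $b_2$). Nothing in your argument prevents the degree-2 vertex $v$ of $G_1$ you produce from lying in $A_1\cup B_1$ (it can perfectly well be an interior vertex of a branch of $G_1$ whose one neighbor is $a_2$), and if, say, $v\in A_1$ and $|A_2|\geq 2$, then $\deg_G(v)=1+|A_2|\geq 3$, so the degree does not transfer from $G_1$ to $G$. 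This is exactly the case the paper's proof spends its effort on.

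The paper closes this case as follows: if $v\in A_1$ has degree 2 in $G_1$, then one of its neighbors is $a_2$ and, by condition (vii) of the definition of a consistent 2-join (consistency holds by Lemma~\ref{l:consistent}), its unique neighbor in $X_1$ lies in $X_1\setminus A_1$; since $|A_1|\geq 2$ by Lemma~\ref{extreme}, $G[A_1]$ is then disconnected, so $A_1$ is not a clique, and condition (iv) of consistency forces $|A_2|=1$; only then does $\deg_G(v)=\deg_{G_1}(v)=2$ follow. Your first paragraph (producing a degree-2 vertex of $G_1$ avoiding the marker path, via Lemma~\ref{l:twoBranches}) matches the paper, but without the consistency argument for the subcase $v\in A_1\cup B_1$ the proof is incomplete.
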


\begin{proof}
  By Lemma~\ref{l:twoBranches}, $G_1$ contains a  vertex $v$ of
  degree~2 that is not in the marker path of $G_1$.  We claim that $v$
  has also degree~2 in $G$. If $v\in X_1\setminus(A_1\cup B_1)$, then
  it is clear, so suppose $v$ is in $A_1\cup B_1$, say in $A_1$ up to
  symmetry.  Note that $(X_1, X_2)$ is consistent by
  Lemma~\ref{l:consistent}.  Since $v$ has degree~2 in $G_1$,
  condition (vii) in the definition of consistent 2-joins applied to
  $v$ implies that $v$ has precisely one neighbor in
  $X_1\setminus A_1$ and one neighbor in the marker path of $G_1$.
  Since by Lemma~\ref{extreme} $|A_1|\geq 2$, it follows that $G[A_1]$
  is disconnected.  Hence, by condition (iv) in the definition of
  consistent 2-joins, $|A_2|=1$. It follows that $v$ has the same
  degree in $G_1$ and in $G$.
\end{proof}

In \cite{nicolas.kristina:two} it is shown that one can decompose a graph with no star cutset
using a sequence of `non-crossing' 2-joins into graphs with no star cutset and no 2-join
(which will in our case be basic). This will be particularly important when using 2-join decomposition to
solve the stable set problem. We now describe such 2-join decomposition obtained in \cite{nicolas.kristina:two}.

A {\it flat path} of $G$ is any path of $G$ of length at least 3, whose
interior vertices are of degree 2, and whose ends do not have a common neighbor.
When $\mathcal M$ is a collection of vertex-disjoint flat paths of
$G$, a 2-join $(X_1,X_2)$ of $G$ is $\mathcal M$-independent if for
every path $P$ from $\mathcal M$ we have that either $V(P)\subseteq X_1$ or
$V(P)\subseteq X_2$.

\subsubsection*{2-Join decomposition tree $T_G$ of depth $p\geq 1$ of a graph $G$ that has no star cutset and has a 2-join}
\begin{itemize}
\item[(i)] The root of $T_G$ is $(G^0,\mathcal M^0)$, where $G^0:=G$ and $\mathcal M^0=\emptyset$.
\item[(ii)] Each vertex of $T_G$ is a pair $(H,\mathcal M)$, where $H$ is a graph of $\mathcal D$ and $\mathcal M$ is a set of disjoint flat paths of $H$.

    The non-leaf vertices of $T_G$ are pairs  $(G^0,\mathcal M^0),\ldots,(G^{p-1},\mathcal M^{p-1})$. Each non-leaf vertex $(G^i,\mathcal M^i)$ has two children. One is $(G^{i+1},\mathcal M^{i+1})$, the other one is $(G_B^{i+1},\mathcal M_B^{i+1})$.

    The leaf-vertices of $T_G$ are the pairs $(G_B^1,\mathcal M_B^1),\ldots,(G_B^{p},\mathcal M_B^{p})$ and $(G^p,\mathcal M^p)$.
    Graphs $G_B^1,G_B^2,\ldots,G_B^p,G^p$ have no star cutset nor  2-join.

  \item[(iii)] For $i\in\{0,1,\ldots,p-1\}$, $G^i$ has a 2-join $(X_1^i,X_2^i)$ that is extreme with extreme side $X_1^i$ and that is $\mathcal M^i$-independent.
  Graphs $G^{i+1}$ and $G_B^{i+1}$ are blocks of decomposition of $G^i$ w.r.t.\ $(X_1^i,X_2^i)$ whose marker paths are of length at least 3.
  The block $G_B^{i+1}$ corresponds to the extreme side $X_1^i$, i.e.\ $X_1^i\subseteq V(G_B^{i+1})$.

      Set $\mathcal M_B^{i+1}$ consists of paths from $\mathcal M^i$ whose vertices are in $X_1^i$. Note that the marker path used to construct the block $G_B^{i+1}$ does not belong to $\mathcal M_B^{i+1}$.

       Set $\mathcal M^{i+1}$ consists of paths from $\mathcal M^i$ whose vertices are in $X_2^i$ together with the marker path $P^{i+1}$ used to build $G^{i+1}$.

\item[(iv)] $\mathcal M_B^1\cup\ldots\cup \mathcal M_B^p\cup \mathcal M^{p}$ is the set of all marker paths used in the construction of the vertices $G^1,\ldots,G^p$ of $T_G$, and the sets $\mathcal M_B^1,\ldots,\mathcal M_B^p,\mathcal M^p$ are pairwise disjoint.
  \end{itemize}

Vertex $(G^p,\mathcal M^p)$ is a leaf of $T_G$ and is called the {\it deepest vertex of $T_G$}.

The 2-join decomposition tree is described slightly differently in \cite{nicolas.kristina:two}, but the following result follows easily from the proofs in \cite{nicolas.kristina:two}.

\begin{lemma}[\cite{nicolas.kristina:two}]\label{DT-construction}
There is an algorithm with the following specification.
\begin{description}
\item[ Input:] A graph $G$ that has no star cutset and has a 2-join.
\item[ Output:]
A 2-join decomposition tree $T_G$ of depth at most $n$. 
\item[ Running time:] $\mathcal O(n^4m)$.
\end{description}
\end{lemma}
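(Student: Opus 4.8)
\noindent
The plan is to construct $T_G$ recursively, directly from the definition of the 2-join decomposition tree, invoking the detection algorithm for minimally-sided 2-joins of~\cite{nicolas.kristina:two} at each non-leaf node. I would take the root to be $(G^0,\mathcal M^0):=(G,\emptyset)$ and maintain the invariant that every node $(G^i,\mathcal M^i)$ produced has $G^i\in\mathcal D$ (so, by Lemma~\ref{Star=Clique}, $G^i$ has no star cutset) and $\mathcal M^i$ a set of vertex-disjoint flat paths of $G^i$. At a node $(G^i,\mathcal M^i)$: if $G^i$ has no 2-join, stop and call it the deepest node and set $p:=i$; otherwise compute a minimally-sided 2-join $(X_1^i,X_2^i,A_1^i,A_2^i,B_1^i,B_2^i)$ with minimal side $X_1^i$ that is, in addition, $\mathcal M^i$-independent, which by Lemma~\ref{extreme} is automatically extreme with $X_1^i$ an extreme side. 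Then form the two blocks of decomposition with marker paths of length exactly $3$, make the block $G_B^{i+1}$ containing $X_1^i$ a leaf, set $\mathcal M_B^{i+1}$ and $\mathcal M^{i+1}$ as in item~(iii) (old paths distributed to the side they lie in, plus the new marker path $P^{i+1}$ appended to $\mathcal M^{i+1}$), and recurse on the block $G^{i+1}$ containing $X_2^i$.

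\noindent
I would then check that this is well defined and that the invariant and conditions (i)--(iv) are preserved. By Lemma~\ref{new2}, both $G_B^{i+1}$ and $G^{i+1}$ lie in $\mathcal D$ (in particular they have no star cutset), and since $X_1^i$ is an extreme side and the marker paths have length $3$, the block $G_B^{i+1}$ has no 2-join and is therefore a legitimate leaf. The paths in $\mathcal M^{i+1}$ are flat paths of $G^{i+1}$: those inherited from $\mathcal M^i$ lie wholly in $X_2^i$ by $\mathcal M^i$-independence and are untouched, hence stay flat; and the new marker path $P^{i+1}$ has length $3\ge 3$, has degree-$2$ interior vertices by construction, and a short check using $A_2^i\cap B_2^i=\emptyset$ together with the length of $P^{i+1}$ shows its two ends have no common neighbour --- this is precisely the reason length-$3$ marker paths are used here in place of the length-$2$ ones of~\cite{twf-p2}. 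Items~(ii) and~(iv) are then immediate from the construction. The existence and efficient computation of the required $\mathcal M^i$-independent minimally-sided 2-join is the content of~\cite{nicolas.kristina:two}: a 2-join avoiding a family of vertex-disjoint flat paths is found by contracting the degree-$2$ interiors of those paths, detecting a 2-join, and lifting back, and passing to a minimally-sided one costs a further $\mathcal O(n)$ factor of refinement; here I would merely re-read those proofs to confirm that they use nothing about $G^i$ beyond the absence of a star cutset.

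\noindent
For the depth, I would use the monovariant counting the \emph{exposed} vertices of $G^i$, namely those that are not interior vertices of any path of $\mathcal M^i$; there are $n$ of them at the root. No vertex of $A_1^i\cup B_1^i$ is an interior vertex of a path of $\mathcal M^i$: if $v\in A_1^i$ were interior to $P\in\mathcal M^i$, then $v\in X_1^i\cap V(P)$ forces $V(P)\subseteq X_1^i$ by $\mathcal M^i$-independence, so both neighbours of $v$ lie in $X_1^i$, contradicting that $v$ is complete to the nonempty set $A_2^i\subseteq X_2^i$. By Lemma~\ref{extreme} we have $|A_1^i|\ge 2$ and $|B_1^i|\ge 2$, so at least $4$ exposed vertices of $G^i$ disappear on passing to $G^{i+1}$, while the only exposed vertices created are the $2$ ends of $P^{i+1}$ (its interior vertices lie on $P^{i+1}\in\mathcal M^{i+1}$). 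Hence the number of exposed vertices drops by at least $2$ at each descent, so $p\le n/2\le n$ and $T_G$ has depth at most $n$.

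\noindent
Finally, for the running time, at each of the $\le n$ non-leaf nodes the computation of the minimally-sided $\mathcal M^i$-independent 2-join dominates, and can be carried out in $\mathcal O(n^3m)$ time using~\cite{nicolas.kristina:two}, while building the two blocks and updating the path sets costs $\mathcal O(n+m)$; this yields the claimed $\mathcal O(n^4m)$ total. I expect the main obstacle to be purely one of translation and bookkeeping: ensuring that the minimally-sided-2-join algorithm of~\cite{nicolas.kristina:two} can be run $\mathcal M^i$-independently within the stated time, and that the exposed-vertex monovariant really does decrease in degenerate situations (small special sets, or a marker path lying entirely inside one side). Both of these are handled by Lemma~\ref{extreme} and the choice of length-$3$ marker paths.
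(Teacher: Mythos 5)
Your overall architecture (recurse on the non-extreme side, make the extreme-side block a leaf, verify flatness of the length-3 marker path via $A_2^i\cap B_2^i=\emptyset$, bound the depth by a monovariant) is reasonable, and indeed the paper itself offers no proof of this lemma: it is imported wholesale from \cite{nicolas.kristina:two}, with only the remark that it ``follows easily from the proofs'' there. But your reconstruction rests on an unsupported step: at each node you ``compute a minimally-sided 2-join that is, in addition, $\mathcal M^i$-independent.'' Such a 2-join need not exist, and this is not what \cite{nicolas.kristina:two} provides. A minimally-sided 2-join can cross a flat path of $\mathcal M^i$ (an interior vertex of degree~2 can lie in $A_1^i$ with $|A_2^i|=1$, etc.), and the actual construction --- as this very paper recalls in the proof of the coloring theorem in Section~\ref{sec:vCol} --- first finds a minimally-sided 2-join and then, to achieve $\mathcal M$-independence, ``pulls'' it in the direction of the minimal side, obtaining a 2-join that is still extreme but in general no longer minimally-sided; the paper there explicitly notes that if one insists on using minimally-sided 2-joins one must give up $\mathcal M$-independence. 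Note also that the definition of $T_G$ only demands extreme, $\mathcal M^i$-independent 2-joins, so your strengthening is both unnecessary for conditions (i)--(iv) and unavailable.

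This gap propagates into your depth bound. The inequality $|A_1^i|,|B_1^i|\geq 2$ that makes the exposed-vertex count drop by at least $4$ comes from Lemma~\ref{extreme}~(i), which holds only for minimally-sided 2-joins; for the pulled (merely extreme) 2-join that must actually be used, the special sets of the new split may be singletons, and your argument that $A_1^i\cup B_1^i$ consists of exposed vertices itself invokes $\mathcal M^i$-independence, so you cannot simply transfer it back to the original minimally-sided split either. Hence the claim $p\le n$ is not established by your argument as written; it has to be extracted, like the rest of the lemma, from the analysis of the pulled extreme 2-joins in \cite{nicolas.kristina:two}. A secondary point: the lemma's input is any graph with no star cutset (not necessarily in $\mathcal C$), so your invariant ``$G^i\in\mathcal D$'' via Lemma~\ref{new2} is out of scope; the correct invariant, quoted from \cite{nicolas.kristina:two} just before Lemma~\ref{extreme}, is that blocks with marker paths of length at least~3 of a graph with no star cutset again have no star cutset.
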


\begin{lemma}\label{new3}
If $G\in {\cal D}$ has a 2-join, then $T_G$ can be constructed, and all graphs
$G_B^1,G_B^2,\ldots,G_B^p,G^p$ that correspond to the leaves of $T_G$ are in $\mathcal D_{\textsc{basic}}$.
\end{lemma}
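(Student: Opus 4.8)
The plan is to verify that the 2-join decomposition tree $T_G$ described above can indeed be built starting from $G\in\mathcal D$, and then to prove by a straightforward induction along the tree that every node of $T_G$ lies in $\mathcal D$, so that in particular the leaves, which by construction have no star cutset and no 2-join, must be basic. First I would observe that since $G\in\mathcal D$, by Lemma~\ref{Star=Clique} $G$ has no star cutset, and by hypothesis $G$ has a 2-join; hence the input requirements of Lemma~\ref{DT-construction} are met and $T_G$ can be constructed. This handles the ``can be constructed'' part of the statement.

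For the main claim, I would argue inductively on the depth $i$ that each node $G^i$ and each node $G_B^{i+1}$ of $T_G$ belongs to $\mathcal D$. The base case $G^0=G\in\mathcal D$ is the hypothesis. For the inductive step, suppose $G^i\in\mathcal D$. By item~(iii) of the construction, $G^{i+1}$ and $G_B^{i+1}$ are blocks of decomposition of $G^i$ with respect to the extreme 2-join $(X_1^i,X_2^i)$, with marker paths of length at least~3. Since $G^i\in\mathcal D$, Lemma~\ref{new2} (applied with marker paths of length at least~2, which is satisfied here) gives that both blocks $G^{i+1}$ and $G_B^{i+1}$ are in $\mathcal D$ — i.e.\ they are in $\mathcal C$ and have no clique cutset — and moreover have no star cutset. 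This closes the induction, so all the nodes $G^1,\dots,G^p,G_B^1,\dots,G_B^p$ are in $\mathcal D$.

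Finally, the construction guarantees (item~(ii)) that the leaf graphs $G_B^1,\dots,G_B^p,G^p$ have no star cutset and no 2-join. A graph in $\mathcal D$ with no 2-join must, by the decomposition Theorem~\ref{decomposeTW}, be a line graph of a triangle-free chordless graph or a P-graph — in other words a basic graph — since it has no clique cutset (being in $\mathcal D$) and no 2-join. Hence each leaf graph is a basic graph in $\mathcal C$ with no clique cutset, i.e.\ lies in $\mathcal D_{\textsc{basic}}$, which is exactly the assertion.

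**Main obstacle.** The only subtle point is making sure that the iterated application of the block-of-decomposition lemmas is legitimate: one must check that at every node the relevant hypotheses still hold — in particular that $(X_1^i,X_2^i)$ is genuinely a 2-join of $G^i$ (not merely an almost 2-join) and is consistent, so that Lemma~\ref{new2} and the ``extreme/minimally-sided'' machinery of Lemma~\ref{extreme} apply at each stage. This is essentially bookkeeping that is already packaged into the statement of Lemma~\ref{DT-construction} and item~(iii) of the tree definition, so the proof should remain short; the bulk of the work has been done in \cite{nicolas.kristina:two} and in the preceding lemmas of this section.
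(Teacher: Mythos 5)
Your proof is correct and follows essentially the same route as the paper: no star cutset (via Lemma~\ref{Star=Clique}) lets you build $T_G$, Lemma~\ref{new2} propagates membership in $\mathcal D$ down the tree, and Theorem~\ref{decomposeTW} forces the leaves (which have no 2-join and no clique cutset) to be basic. Spelling out the induction along the tree and noting that "in $\mathcal D$" already gives no clique cutset are only cosmetic differences from the paper's argument.
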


\begin{proof}
By Lemma \ref{Star=Clique} $G$ has no star cutset, and hence we can construct $T_G$.
By Lemma \ref{new2} all graphs that correspond to vertices of $T_G$ belong to ${\cal D}$.
By construction graphs $G_B^1,G_B^2,\ldots,G_B^p,G^p$ have no star cutset nor 2-join.
By Lemma \ref{Star=Clique} it follows that none of them has a clique cutset, and hence by Theorem \ref{decomposeTW}
all of them are basic.
\end{proof}

\section{Maximal cliques and clique coloring}
\label{sec:maxCl}

A vertex $v$ of a graph $G$ is {\it simplicial} if $N(v)$ is a
clique, and  it is {\it bisimplicial} if $N(v)$ is a disjoint
union of two cliques that are anticomplete to each other.
Note that every simplicial vertex is also bisimplicial.
We now show that every graph $G\in\mathcal C$ has a bisimpicial vertex, which we then use
 to obtain an algorithm for finding a maximum weight clique of $G$ and to prove that $G$ is 3-clique-colorable.

\begin{theorem}\label{bisimplicial}
  If $G\in\mathcal C$ then for every clique $K$
  of $G$, either $K= V(G)$ or there is  a bisimplicial vertex
  (of $G$) in $G\setminus K$.
\end{theorem}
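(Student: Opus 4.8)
The plan is to induct on $|V(G)|$, using the decomposition theorem (Theorem~\ref{decomposeTW}) to reduce to the three cases: $G$ basic (a line graph of a triangle-free chordless graph, or a P-graph), $G$ has a clique cutset, or $G$ has a 2-join. The statement to prove is that for every clique $K$ of $G$, either $K = V(G)$ or there is a bisimplicial vertex of $G$ lying outside $K$.

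First I would handle the \emph{clique cutset} case. Suppose $(A, T, B)$ is a split for a clique cutset $T$ of $G$, so $A, B$ are nonempty and anticomplete, and consider a clique $K$ with $K \neq V(G)$. The blocks $G_A = G[A \cup T]$ and $G_B = G[T \cup B]$ are both in $\mathcal{C}$ and strictly smaller than $G$. Since $K$ is a clique it cannot meet both $A$ and $B$, so up to symmetry $K \subseteq T \cup B = V(G_B)$. Apply induction to $G_A$ with the clique $K' = T$: since $A \neq \emptyset$ we have $K' \neq V(G_A)$, so there is a bisimplicial vertex $v$ of $G_A$ in $A = V(G_A) \setminus T$. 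Because $A$ is anticomplete to $B$, the neighborhood of $v$ in $G$ is contained in $A \cup T = V(G_A)$, hence $N_G(v) = N_{G_A}(v)$ and $v$ is bisimplicial in $G$; moreover $v \in A$ is disjoint from $K \subseteq T \cup B$. (If $K \subseteq T$, either block works.)

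Next, the \emph{2-join} case. Let $(X_1, X_2, A_1, A_2, B_1, B_2)$ be a split of a 2-join of $G$. We may assume $G$ has no clique cutset (else we are in the previous case), so $G \in \mathcal{D}$ and the 2-join is consistent (Lemma~\ref{l:consistent}); I would take a minimally-sided 2-join with $X_1$ a minimal side, so that Lemma~\ref{extreme} gives $|A_1|, |B_1| \geq 2$, and Lemma~\ref{l:2joinDeg2} is available. Again a clique $K$ cannot meet both $X_1$ and $X_2$ except inside $A_1 \cup A_2$ or $B_1 \cup B_2$; in any case, $K$ misses at least one of $X_1 \setminus (A_1 \cup B_1)$ or the far side entirely, and I would aim to produce a bisimplicial vertex of $G$ of degree~$2$ inside the side not meeting $K$ — a degree-$2$ vertex is automatically bisimplicial (its two neighbors each form a clique of size~$1$, or together a clique of size~$2$). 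The block $G_1$ (with marker path of length~$3$) is in $\mathcal{D}$ by Lemma~\ref{new2}; if $G_1$ is basic and is a line graph of a triangle-free chordless graph, apply induction (or directly find a low-degree vertex); if $G_1$ is a P-graph, Lemma~\ref{l:2joinDeg2} directly gives a vertex of $X_1$ of degree~$2$ in $G$, which is bisimplicial in $G$ and (being of degree~$2$ while $|A_1|, |B_1| \geq 2$) lies in $X_1 \setminus (A_1 \cup B_1) \subseteq C_1$, disjoint from $K$ as long as $K$ does not lie entirely in $X_1$; the symmetric choice of side handles the remaining subcase. I expect this bookkeeping — choosing the correct side relative to $K$, and recursing into a block rather than into $G$ itself — to be the main obstacle, since one must verify that the bisimplicial vertex found in a block keeps its neighborhood (and hence its bisimpliciality) when reinterpreted in $G$, using the consistency conditions (vii), (viii) and the clique structure of $A_i, B_i$ from conditions (iv), (v).

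Finally, the \emph{basic} case, which is the base of the induction (a basic graph in $\mathcal{C}$ with no applicable cutset, or simply handled directly). If $G$ is a line graph of a triangle-free chordless graph $R$, then $G$ is (wheel, diamond, claw)-free (Lemma~\ref{p1l2.4}), and claw-free plus diamond-free forces every neighborhood to be a disjoint union of at most two cliques — indeed every vertex of such a line graph is bisimplicial — so I would just exhibit a vertex outside $K$ (one exists since $K \neq V(G)$) and note it is bisimplicial; a little care is needed to pick one whose neighborhood avoids being entirely absorbed, but since \emph{every} vertex is bisimplicial here the claim is immediate. If $G$ is a P-graph, use Lemma~\ref{l:twoBranches}: $G$ has two distinct branches of length at least~$2$, hence two nonadjacent vertices of degree~$2$; since $K$ is a clique it contains at most one of them, so the other is a degree-$2$ (hence bisimplicial) vertex of $G \setminus K$. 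This completes all cases.
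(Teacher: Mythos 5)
Your base case (basic graphs) and your clique-cutset case are essentially the paper's argument and are fine. The genuine problems are in the 2-join case, and they are exactly at the point you yourself flag as ``the main obstacle'': the bookkeeping of which side of the 2-join $K$ sits on. All of the machinery you invoke --- Lemma~\ref{extreme} giving $|A_1|,|B_1|\geq 2$ and the block $G_1$ having no 2-join (hence being basic), and Lemma~\ref{l:2joinDeg2} producing a degree-2 vertex --- applies only to the \emph{minimal} side $X_1$ of a minimally-sided 2-join. If $K$ lies in (or meets the interior of) $X_1$, there is no ``symmetric choice of side'': $X_2$ need not be a minimal side, the block $G_2$ need not be basic, and $|A_2|$ or $|B_2|$ may be $1$, so your fallback does not exist. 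The paper resolves this by first locating $K$ with respect to an \emph{arbitrary} 2-join $(X_1',X_2')$ (a clique satisfies $K\subseteq X_1'\cup A_2'$ or $K\subseteq X_2'\cup B_1'$), and only then choosing a minimally-sided 2-join with its minimal side $X_1\subseteq X_1'$, i.e.\ inside the side away from $K$; a further short argument (a vertex of $B_2'$ would otherwise have neighbors in both $A_1$ and $B_1$) shows one may assume $K\cap A_1=\emptyset$, so $K\subseteq X_2\cup B_1$. Your sketch contains no substitute for this step.

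A second, concrete error: you claim the degree-2 vertex $u$ given by Lemma~\ref{l:2joinDeg2} must lie in $C_1=X_1\setminus(A_1\cup B_1)$ because $|A_1|,|B_1|\geq 2$. That inference is false: these conditions do not force $|A_2|,|B_2|\geq 2$, and indeed when $A_1$ (resp.\ $B_1$) is not a clique one has $|A_2|=1$ (resp.\ $|B_2|=1$), in which case a vertex of $A_1$ or $B_1$ with a single neighbor in $X_1$ has degree~2 in $G$ --- this is precisely the case treated in the proof of Lemma~\ref{l:2joinDeg2} itself. Consequently $u$ may lie in $B_1$ and hence possibly in $K$, and your argument has no way out. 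The paper needs two separate subcases here: if $B_1$ is a clique, consistency condition (viii) forces $u\notin B_1$; if $B_1$ is not a clique, one argues instead that the special clique of the P-graph $G_1$ is contained in $B_1\cup\{b_2\}$ and that then \emph{any} vertex of $X_1\setminus B_1$ is bisimplicial in $G$ (using that $A_2$ is a clique). Relatedly, in the line-graph subcase your ``apply induction (or directly find a low-degree vertex)'' is not adequate as stated: recursing into the block $G_1$ is delicate because marker-path vertices are not vertices of $G$ and vertices of $A_1\cup B_1$ change neighborhoods, and $C_1$ may contain no vertex at all; the correct statement (the paper's) is that every vertex of $A_1$ is bisimplicial in $G_1$ and remains bisimplicial in $G$ because $A_2$ is a clique, and such a vertex avoids $K$ once $K\subseteq X_2\cup B_1$ has been secured.
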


\begin{proof}
The proof is by induction on $|V(G)|$.

If $R$ is a triangle-free chordless graph, then $L(R)$ does not
contain a claw nor a diamond, and hence every vertex of $L(R)$ is
 bisimplicial. If $G$ is a P-graph, then by
Lemma~\ref{l:twoBranches} it contains at least two branches of length
at least~2.  The clique $K$ contains internal vertices of at most one
of these branches. Hence, $G\setminus K$ contains a vertex of
degree~2, that is therefore bisimplicial. So, when $G$
is basic the result holds.

Let us now suppose that $(A,K',B)$ is a split of a clique cutset $K'$
of $G$, and let $G_A$ and $G_B$ be the blocks of decomposition w.r.t.\
this clique cutset. Then clique $K$ is contained in $G_A$ or in
$G_B$. W.l.o.g.\ suppose that $K$ is contained in $G_B$. By induction,
there is a bisimplicial vertex in $G_A\setminus K'$, and
hence in $G\setminus K$.

So, let us suppose that $G$ is not basic and that it does not admit a
clique cutset.  By Theorem \ref{decomposeTW}, $G$ admits a 2-join
$(X_1',X_2',A_1',A_2',B_1',B_2')$. Then $K$ is contained in
$G[X_1'\cup A_2']$ or in $G[X_2'\cup B_1']$. W.l.o.g.\ suppose that
$K$ is contained in $G[X_2'\cup B_1']$. Let
$(X_1,X_2,A_1,A_2,B_1,B_2)$ be a split of a minimally-sided 2-join of
$G$ with $X_1\subseteq X_1'$ being a minimal side, and let $G_1$ and
$G_2$ be the corresponding blocks of decomposition.
By Lemma
\ref{Star=Clique}, $G$ does not have a star cutset.
So by Lemma \ref{extreme} (ii) $G_1$ does not have a 2-join.
By Lemma \ref{new2}, $G_1\in {\cal D}$, and so
by Theorem \ref{decomposeTW}, $G_1$ is basic.
Additionally, by Lemma \ref{extreme}~(i), $|A_1|,|B_1|\geq 2$, and
hence, by (iv) and (v) of definition of consistent 2-join, $A_2$ and $B_2$ are cliques. Also
we may assume that
$K\cap A_1=\emptyset$, since otherwise for $u\in K\cap A_1$ and $v\in K\cap B_1$,
$u,v\in B_1'$, and hence any $b\in B_2'$ is a vertex of $X_2'$ that has a neighbor in both
$A_1$ and $B_1$, contradicting the assumption that $(X_1,X_2)$ is a 2-join of $G$
such that $X_1\subseteq X_1'$.
It follows that $K\subseteq X_2\cup B_1$.
If $G_1$ is the line graph
of a triangle-free chordless graph, then every vertex of $A_1$ is
 bisimplicial in $G_1$ and hence  bisimplicial
in $G$ (since $A_2$ is a clique). So, let us assume that $G_1$ is a P-graph. By
Lemma~\ref{l:2joinDeg2},  a vertex $u$ of $X_1$ is  of degree 2 in $G$.
If $B_1$ is a clique then, since $|B_1|\geq 2$ and by (viii) of definition of consistent 2-join, it follows that $u\not\in B_1$, and therefore $u\not\in K$ and the result holds (since $A_2$ is a clique).
If $B_1$ is not a clique, then $b_2$ is a center of a claw of $G_1$, and hence it is contained in the special clique $K'$ of P-graph $G_1$ (where $b_2$ is the vertex of the marker path of $G_1$ that is complete to $B_1$). So $K'\subseteq B_1\cup \{b_2\}$. Now, since every vertex of $X_1\setminus K'$ is bisimplicial in $G_1$, every vertex of $X_1\setminus B_1$ is bisimplicial in $G$ (since $A_2$ is a clique).
\end{proof}

\subsection*{Maximum weight clique}

Let $G$ be a graph and  $w:V(G)\rightarrow [0,+\infty)$ a {\it weight function on $G$}. A {\it maximum weight clique} of $G$
is a clique $K$ of $G$, such that $\sum_{v\in K} w(v)$ has the maximum value. If $K$ is a maximum weight clique of $G$,
we denote by $\omega_w(G)$ the value of the sum  $\sum_{v\in K} w(v)$.

\begin{theorem}\label{SMaxCliqueAlg}
There is an algorithm with the following specifications:
\begin{description}
\item[ Input:] A weighted graph $G\in\mathcal C$.
\item[ Output:]
A maximum weight clique of $G$.
\item[ Running time:] $\mathcal O(n^2m)$.
\end{description}
\end{theorem}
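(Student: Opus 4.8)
The plan is to compute a maximum weight clique via a \emph{bisimplicial elimination scheme}, in the spirit of perfect elimination orderings for chordal graphs. The engine is Theorem~\ref{bisimplicial}: taking $K=\emptyset$ there shows that every nonempty graph in $\mathcal C$ has a bisimplicial vertex, and since $\mathcal C$ is closed under taking induced subgraphs, so does every nonempty induced subgraph of the input graph $G$. The algorithm runs in at most $n$ rounds. Put $G_0=G$; in round $i$ (for $i=0,1,\dots$, while $G_i$ is nonempty) find a bisimplicial vertex $v_i$ of $G_i$, together with the partition $N_{G_i}(v_i)=K_1^i\cup K_2^i$ into two anticomplete cliques (one possibly empty), record the clique $\{v_i\}\cup K_j^i$ where $j\in\{1,2\}$ maximizes $w(K_j^i)$, and then set $G_{i+1}=G_i\setminus v_i$. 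The output is the heaviest recorded clique (and $\emptyset$ if $G$ itself is empty).

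For correctness, the key remark is that if $v$ is bisimplicial in a graph $H$ with $N_H(v)=K_1\cup K_2$, where $K_1,K_2$ are cliques anticomplete to each other, then every clique of $H$ containing $v$ has the form $\{v\}\cup Q$ with $Q$ a clique of $H[N_H(v)]$, and any such $Q$ is contained entirely in $K_1$ or entirely in $K_2$ (as $K_1$ is anticomplete to $K_2$). Since the weights are nonnegative, the heaviest clique of $H$ through $v$ is thus $\{v\}\cup K_j$ for the heavier side $K_j$, exactly as recorded. Now let $K$ be any nonempty clique of $G$ and let $v_i$ be the vertex of $K$ deleted earliest in the run; then $K\subseteq V(G_i)$, so $K$ is a clique of $G_i$ through $v_i$, and hence $w(K)$ is at most the weight recorded in round $i$ (the empty clique being trivially dominated). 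As every recorded clique is a clique of $G$, the heaviest recorded clique has weight $\omega_w(G)$, which proves correctness.

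It remains to bound the running time, and this is where the analysis needs care. Within a round the dominant cost is locating a bisimplicial vertex of $G_i$ together with its partition; the subsequent weight comparison and deletion of $v_i$ cost $\mathcal O(m)$. To test a single vertex $v$ of $G_i$: if $N_{G_i}(v)$ is a clique then $v$ is simplicial (set $K_2=\emptyset$); otherwise pick two nonadjacent vertices $a,b\in N_{G_i}(v)$, set $K_1=\{a\}\cup(N(a)\cap N_{G_i}(v))$ and $K_2=N_{G_i}(v)\setminus K_1$, and verify that $K_1$ and $K_2$ are cliques anticomplete to each other. The point is that this candidate partition is forced: if $v$ really is bisimplicial, then $K_1$ must be the component of $a$ in $G_i[N_{G_i}(v)]$, which equals $\{a\}\cup(N(a)\cap N_{G_i}(v))$, so the three checks correctly decide bisimpliciality. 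Each check is implemented in $\mathcal O\bigl(\sum_{u\in N_{G_i}(v)}\deg_{G_i}(u)\bigr)=\mathcal O(m)$ time by marking $N_{G_i}(v)$ and scanning the adjacency lists of its members; summing over the at most $n$ vertices $v$ of $G_i$, one round costs $\mathcal O(nm)$, and the search always succeeds by Theorem~\ref{bisimplicial}. With at most $n$ rounds the total running time is $\mathcal O(n^2m)$.

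I expect the main obstacle to be precisely the second half of the running-time step: arguing that when $N(v)$ decomposes into two anticomplete cliques this decomposition is unique, so that a single candidate partition suffices, and implementing the cliqueness and anticompleteness tests inside the $\mathcal O(m)$ budget; the rest is a routine elimination-ordering argument resting on Theorem~\ref{bisimplicial}.
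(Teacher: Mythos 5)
Your proposal is correct and follows essentially the same route as the paper: repeatedly extract a bisimplicial vertex (guaranteed by Theorem~\ref{bisimplicial}, which applies to every induced subgraph since $\mathcal C$ is hereditary), compare the two cliques $\{v\}\cup K_1$, $\{v\}\cup K_2$ against the best clique of the remaining graph, and delete $v$, for $\mathcal O(n\cdot nm)=\mathcal O(n^2m)$ total time. The only difference is presentational: you phrase the recursion iteratively and spell out the $\mathcal O(nm)$ search for a bisimplicial vertex (uniqueness of the two-clique partition of $N(v)$), which the paper simply asserts.
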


\begin{proof}
By Theorem \ref{bisimplicial}, $G$ contains a vertex $v$ that is bisimplicial. This vertex can be found in time $\mathcal O(nm)$. Let $N(v)$ consist of
(possibly empty) cliques $K_1$ and $K_2$.
Then  \[\omega_w(G)=\max\{\omega_w(G\setminus v),\omega_w(\{v\}\cup K_1),\omega_w(\{v\}\cup K_2)\},\] and if $K$ is the maximum weight clique of
$G\setminus v$, then a maximum weight clique of $G$ is $K$, $\{v\}\cup K_1$ or $\{v\}\cup K_2$.
So it is enough to find a maximum weight clique of $G\setminus v$, which can be done by applying (recursively) the same procedure on $G\setminus v$.

The total running time of this algorithm is $\mathcal O(n\cdot nm)=\mathcal O(n^2m)$.
\end{proof}

\subsection*{Clique coloring}

A {\it k-clique-coloring} of a graph $G$ is a function $c:V(G)\rightarrow \{1,2,\ldots,k\}$, such that for every inclusion-wise maximal clique $K$ of size at least 2,
$c(K)=\{c(v)\,:\,v\in K\}$ has at least 2 elements. We say  that $G$ is {\it $k$-clique-colorable} if it admits a $k$-clique-coloring. The {\it clique-chromatic number} of $G$, denoted by $\chi_C(G)$, is the smallest number $k$ such that $G$ is $k$-clique-colorable.

There are graphs in $\mathcal C$ that are not 2-clique-colorable, as
shown in Figure \ref{fig:3-clique-color}.
We now prove that every $G\in\mathcal C$ is $3$-clique-colorable.

 \begin{figure}[h!]
   \begin{center}
     \psset{xunit=21.0mm,yunit=21.0mm,radius=0.1,labelsep=0.5mm}
     \def\rputnode(#1,#2)#3#4{\Cnode(#1,#2){#3}\nput{0}{#3}{\small$#4$}}
     \begin{pspicture}(3,2.4)

       \rputnode(1,1){a}{}
       \rputnode(2,1){b}{}
       \rputnode(1,0){a1}{}
       \rputnode(2,0){b1}{}
       \rputnode(1.5,1.7){c}{}

       \rputnode(0.3,1.5){a2}{}
       \rputnode(2.7,1.5){b2}{}
       \rputnode(0.8,2.2){c1}{}
       \rputnode(2.2,2.2){c2}{}

       \ncline{a}{b}
       \ncline{a}{c}
       \ncline{b}{c}
       \ncline{a}{a1}
       \ncline{a}{a2}
       \ncline{b}{b1}

       \ncline{b}{b2}
       \ncline{c}{c1}
       \ncline{c}{c2}
       \ncline{a1}{b1}

       \ncline{a2}{c1}
       \ncline{b2}{c2}
       \ncline{0x}{0x3}

     \end{pspicture}
   \end{center}
 \caption{\label{fig:3-clique-color} Graph from $\mathcal C$ that is not 2-clique-colorable}
 \end{figure}

\begin{theorem}
If $G\in\mathcal C$, then $\chi_C(G)\leq 3$.
\end{theorem}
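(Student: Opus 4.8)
The plan is to mirror the structure of the proof of Theorem~\ref{bisimplicial}: induct on $|V(G)|$, handle the basic graphs directly, and then push a $3$-clique-coloring through clique cutsets and through a minimally-sided $2$-join. The base case is when $G$ is basic. If $G$ is the line graph of a triangle-free chordless graph, then by Lemma~\ref{p1l2.4} $G$ is diamond-free, so every maximal clique of size at least $2$ is an edge or a triangle coming from a vertex of $R$; a proper edge-coloring of $R$ with few colors (any triangle-free graph is $3$-edge-colorable by Vizing, in fact $\Delta(R)$-edge-colorable when $R$ has no parallel edges, but $3$ suffices for our purposes since a chordless triangle-free graph has small enough structure) yields a $2$-clique-coloring: a proper edge-coloring of $R$ gives a coloring of $V(L(R))$ in which no triangle and no edge of $L(R)$ is monochromatic. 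Actually it is cleaner to observe that it is enough to make every maximal clique non-monochromatic, and for a line graph of a triangle-free graph the maximal cliques correspond to stars and triangles of $R$; a proper $2$-coloring suffices here if $R$ is bipartite and $3$ colors always suffice. For P-graphs, by Lemma~\ref{p2l4.2} the graph is diamond-free, so again maximal cliques of size $\geq 2$ are controlled: the line-graph part is handled as above, and the special clique $K=\{v_1,\dots,v_k\}$ together with the pendant vertices attached to it needs separate care. I would color $v_1,\dots,v_k$ using two colors so that $K$ itself is non-monochromatic (if $k\geq 2$), and check that each maximal clique containing some $v_i$ and pendant vertices of $L(R)$ is non-monochromatic; here the extra third color gives the necessary slack.

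For the inductive step, first suppose $G$ has a clique cutset with split $(A,K',B)$, giving blocks $G_A=G[A\cup K']$ and $G_B=G[K'\cup B]$. By induction both blocks are $3$-clique-colorable. The standard argument: any $3$-clique-coloring of a graph restricted to a clique uses at most $3$ colors on $K'$, and since $|K'|$ may be large we cannot simply permute colors to agree on $K'$. Instead I would use the fact (routine for clique cutsets) that a $3$-clique-coloring of $G_A$ and one of $G_B$ can be glued: the maximal cliques of $G$ are exactly the maximal cliques of $G_A$ together with those of $G_B$ (every maximal clique of $G$ lies entirely in $A\cup K'$ or in $K'\cup B$), so it suffices to recolor $G_B$ by a permutation/identification of colors that matches the coloring of $G_A$ on $K'$ — and because $K'$ is a clique that is already non-monochromatic in $G_A$ (or a single vertex/empty, in which case there is nothing to match beyond a relabeling), a suitable bijection on the color set $\{1,2,3\}$ exists after possibly first recoloring $G_A$ so that $K'$ uses a canonical pattern. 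The only subtlety is when $K'$ has size $1$ or $2$; these are easily dispatched by noting that with $3$ colors there is enough freedom.

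For the $2$-join step, let $(X_1,X_2,A_1,A_2,B_1,B_2)$ be a split of a minimally-sided $2$-join with $X_1$ a minimal side, and $G_1,G_2$ the blocks with marker paths of length $3$. By Lemma~\ref{new2}, $G_1,G_2\in\mathcal D$, and by Lemma~\ref{extreme} we have $|A_1|,|B_1|\geq 2$, so $A_2$ and $B_2$ are cliques by consistency. By induction $G_1$ and $G_2$ have $3$-clique-colorings $c_1,c_2$. The goal is to combine them into a $3$-clique-coloring of $G$. The marker path in $G_1$ is $P_2=a_2x b_2$ (length $3$, one internal vertex $x$), and similarly $P_1=a_1 y b_1$ in $G_2$; the real graph $G$ is obtained by gluing $X_1$ (with its $A_1,B_1$) to $X_2$ (with its $A_2,B_2$) along the complete bipartite patterns. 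The key structural fact is that every maximal clique of $G$ is either contained in $G[X_1\cup A_2]$ or in $G[X_1\cup B_2]$ or in $G[X_2\cup A_1]$ or in $G[X_2\cup B_1]$ — because $A_2\cup B_2$ contains no edge between $A_2$ and $B_2$ except when they share vertices (they don't, they're disjoint), and any clique meeting both $X_1\setminus(A_1\cup B_1)$-type vertices and $X_2$ must route through $A_1\times A_2$ or $B_1\times B_2$. So the maximal cliques split into ones living essentially in $X_1$-side and ones living in $X_2$-side, with interaction only across $A_1$–$A_2$ (a clique, since $A_2$ is a clique and $A_1$ in the P-graph case is constrained) and $B_1$–$B_2$. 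I would arrange that in $c_1$ the vertices of $A_1$ (all adjacent to the marker vertex $a_2$) and $a_2$ itself determine a color pattern, and similarly force $c_2$ on $A_2,a_1$ to be "compatible": concretely, recolor so that a specially chosen vertex $\alpha_1\in A_1$ gets a fixed color matching a chosen vertex $\alpha_2\in A_2$, and likewise on the $B$ side, and argue that $3$ colors give enough room to do this while keeping every maximal clique non-monochromatic. The main obstacle — and where I expect the real work to be — is exactly this compatibility bookkeeping across the two special sets $A$ and $B$ simultaneously: one must ensure that the recoloring needed to match on $A_1$–$A_2$ does not destroy the match on $B_1$–$B_2$, and that maximal cliques of the form (subset of $X_1$) $\cup$ $A_2$ remain non-monochromatic after the marker vertex $a_2$ in $G_1$ is "replaced" by the actual clique $A_2$. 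The length-$3$ marker path (rather than length $2$) is presumably what buys the freedom to do this; I would exploit the internal vertex of the marker path as a "buffer" color that can be reassigned freely, and I expect the case analysis to hinge on whether $|A_2|=1$ or $A_2$ is a larger clique (mirroring the case split in Lemma~\ref{l:2joinDeg2} and the proof of Theorem~\ref{bisimplicial}).
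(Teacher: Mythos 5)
Your proposal does not establish the theorem: it is a sketch of a decomposition-based argument whose two crucial steps are left undone, and at least one of them is genuinely problematic as stated. For the clique-cutset step, the claim that $3$-clique-colorings of $G_A$ and $G_B$ can be made to agree on $K'$ by a permutation of the colors (after a ``canonical'' recoloring) is not routine and is false as a general gluing principle: if $K'$ is a clique of size $3$ (or larger) colored with pattern $1,1,2$ in $G_A$ and $1,2,3$ in $G_B$, no bijection of $\{1,2,3\}$ reconciles the two, and recoloring one block to force a prescribed pattern on $K'$ may destroy the non-monochromaticity of other maximal cliques; you give no argument for why such a recoloring exists. For the $2$-join step you explicitly concede that the compatibility bookkeeping across $A_1$--$A_2$ and $B_1$--$B_2$ is ``where the real work'' lies, but that work is precisely what a proof would have to contain: the maximal cliques of $G$ crossing the join are of the form $A_1'\cup A_2$ or $B_1'\cup B_2$ and are not visible as maximal cliques in either block, so the inductive colorings of $G_1$ and $G_2$ give no direct control over them. (Also, a marker path of length $3$ has two internal vertices, not one.) The base cases are likewise only waved at, though those could probably be repaired.

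The paper's proof is far shorter and uses a tool you overlooked even though it is proved immediately before this statement: Theorem~\ref{bisimplicial} guarantees a bisimplicial vertex $v$ in any $G\in\mathcal C$. One then inducts on $|V(G)|$: $3$-clique-color $G\setminus v$, write $N(v)=K_1\cup K_2$ with $K_1,K_2$ disjoint anticomplete cliques, and color $v$ with a color different from that of one vertex of $K_1$ and one vertex of $K_2$ (possible with $3$ colors). Every maximal clique of size at least $2$ containing $v$ equals $\{v\}\cup K_1$ or $\{v\}\cup K_2$, so it is not monochromatic, and maximal cliques avoiding $v$ are handled by induction. You should use the bisimplicial-vertex theorem as a black box rather than re-running the decomposition; the decomposition route requires gluing lemmas for clique colorings that you have not supplied.
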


\begin{proof}
The proof is by induction on $|V(G)|$.
By Theorem \ref{bisimplicial}, $G$ contains a vertex $v$  that is bisimplicial.
By induction, we can 3-clique-color $G\setminus v$.
Let $K_1$ and $K_2$ be disjoint, anticomplete cliques  such that $K_1\cup K_2=N(v)$.
To obtain a 3-clique-coloring of $G$ from the 3-clique-coloring of $G\setminus v$ it is enough to color $v$
with a color different from a vertex of $K_1$ and a vertex of $K_2$ (note that if $K_i$ is empty, for some $i\in \{ 1,2\}$, then any of the three
colors satisfies the property).
\end{proof}

\section{Stable set problem}
\label{sec:stable}

Let $G$ be a graph and  $w:V(G)\rightarrow [0,+\infty)$ a {\it weight function on $G$}. A {\it maximum weight stable set} of $G$ is a stable set $S$ of
$G$, such that $\sum_{v\in S} w(v)$ has the maximum value. If $S$ is a maximum weight stable set of $G$, we denote by
$\alpha_w(G)$ the value of the sum  $\sum_{v\in S} w(v)$.

In this section we give a polynomial-time algorithm for finding a maximum weight stable set of a weighted graph in $\mathcal C$.
To do this we first introduce a different way to decompose w.r.t.\ a 2-join, one that is suited for the stable set problem.


A {\it gem} $\Gamma$ is the graph defined with $V(\Gamma)=\{p_1,p_2,p_3,p_4,z\}$ and  $E(\Gamma)=\{p_1p_2,p_2p_3,p_3p_4,p_1z,p_2z,p_3,p_4z\}$. Vertex $z$ is the {\it center} of the gem $\Gamma$.
Let $(X_1,X_2,A_1,A_2,B_1,B_2)$ be a split of a  2-join of $G\in\mathcal C$ and $C_i=X_i\setminus(A_i\cup B_i)$, for $i\in\{1,2\}$. To build a {\it gem-block} $G_2^g$ replace $X_1$ by an induced path $pxyq$
plus a vertex $z$ complete to this path, such that $p$ (resp.\ $q$) is complete to $A_2$ (resp.\ $B_2$)
and these are the only edges between $\{ p,x,y,q,z\}$ and $X_2$.
Note that $G_2^g$ is not necessarily in $\mathcal C$.
Let $a:=\alpha_w(G[A_1\cup C_1])$, $b:=\alpha_w(G[B_1\cup C_1])$, $c:=\alpha_w(G[C_1])$ and $d:=\alpha_w(G[X_1])$. We give the following weights to the new vertices of $G_2^g$: $w(p)=a$, $w(x)=a+b-d$, $w(y)=d$, $w(q)=2d-a$ and $w(z)=c+d$.

\begin{lemma}[\cite{nicolas.kristina:two}]\label{gem}
If $G_2^g$ is the gem-block of $G$, then the weights of $G_2^g$ are non-negative and
$\alpha_w(G_2^g)=\alpha_w(G)+d$.
\end{lemma}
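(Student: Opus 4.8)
The statement has two parts: (1) the five new weights $w(p), w(x), w(y), w(q), w(z)$ are all non-negative, and (2) $\alpha_w(G_2^g) = \alpha_w(G) + d$. I would handle these in order, establishing first a few inequalities among the quantities $a = \alpha_w(G[A_1 \cup C_1])$, $b = \alpha_w(G[B_1 \cup C_1])$, $c = \alpha_w(G[C_1])$, $d = \alpha_w(G[X_1])$ that drive everything.

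\medskip

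\emph{Non-negativity.} Since $A_1 \cup C_1$, $B_1 \cup C_1$, $C_1 \subseteq X_1$, monotonicity of $\alpha_w$ under taking induced subgraphs (all weights being non-negative) gives $c \le a \le d$, $c \le b \le d$. This immediately yields $w(p) = a \ge 0$, $w(y) = d \ge 0$, $w(q) = 2d - a \ge d \ge 0$, and $w(z) = c + d \ge 0$. The only non-obvious one is $w(x) = a + b - d \ge 0$, i.e.\ $\alpha_w(G[A_1 \cup C_1]) + \alpha_w(G[B_1 \cup C_1]) \ge \alpha_w(G[X_1])$. To see this, take a maximum weight stable set $S$ of $G[X_1]$ (so $w(S) = d$) and let $S_A = S \cap (A_1 \cup C_1)$, $S_B = S \cap B_1$. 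Then $S_A$ is a stable set of $G[A_1 \cup C_1]$ and $S_B$ is a stable set of $G[B_1]$, which is a subgraph of $G[B_1 \cup C_1]$; since $S_A$ and $S_B$ partition $S$, we get $d = w(S_A) + w(S_B) \le a + \alpha_w(G[B_1]) \le a + b$, as desired.

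\medskip

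\emph{The weight identity.} I would prove $\alpha_w(G_2^g) = \alpha_w(G) + d$ by showing both inequalities. For ``$\ge$'': given a maximum weight stable set $S$ of $G$, write $S_2 = S \cap X_2$ and extend $S_2$ to a stable set of $G_2^g$ by adding suitable vertices among $\{p,x,y,q,z\}$ depending on whether $S$ meets $A_1$, $B_1$, $C_1$ only, etc.; the bookkeeping here uses the consistency conditions (in particular condition (iv)/(v), which force $A_2, B_2$ to be cliques or a single vertex, controlling how $S_2$ interacts with $p$ and $q$) and the chosen weights, so that the total gain is exactly $d$. For ``$\le$'': given a maximum weight stable set $T$ of $G_2^g$, with $T_1 = T \cap \{p,x,y,q,z\}$ and $T_2 = T \cap X_2$, I would in each case ($T_1$ one of $\{p\}, \{q\}, \{x\}, \{y\}, \{p, q\}, \{z\}, \emptyset$, up to the adjacency structure of the gem) replace $T_1$ by an appropriate stable set inside $X_1$ compatible with $T_2$ across the 2-join — for instance $T_1 = \{p\}$ (so $T_2$ avoids $A_2$, hence across the join any stable set of $G[A_1 \cup C_1]$ is compatible) is replaced by a maximum weight stable set of $G[A_1 \cup C_1]$ of weight $a = w(p)$, losing nothing; $T_1 = \{x\}$ or $\{y\}$ or $\{p,q\}$ gets matched against $G[X_1]$ or $G[C_1]$ with the arithmetic of the weights arranged precisely so the net change is $-d$. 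This shows $\alpha_w(G_2^g) - d \le \alpha_w(G)$.

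\medskip

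\emph{Main obstacle.} The routine-but-delicate part is the case analysis in the weight identity: one must check that for \emph{every} possible trace $T_1$ of a stable set on the gem $\{p,x,y,q,z\}$, the value $w(T_1)$ equals $d$ plus the maximum weight of a stable set of the corresponding piece of $X_1$ that is ``compatible'' with the complete/anticomplete pattern the gem vertex imposes on $X_2$ — and symmetrically for the reverse direction. Getting the weights $w(x) = a+b-d$ and $w(q) = 2d-a$ to make all these identities hold simultaneously is the heart of the lemma; this is presumably exactly why these particular formulas were chosen, and it is where I would expect to spend most of the effort. I would also need to be slightly careful that the gem-block is well-defined even when $C_1 = \emptyset$ (then $c = 0$) and when $A_2$ or $B_2$ is a single vertex rather than a clique, but consistency (Lemma~\ref{l:consistent}) and the definition of a 2-join ensure the construction and the argument go through uniformly.
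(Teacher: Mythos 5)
Your proposal is correct and follows essentially the same route as the proof in the cited source \cite{nicolas.kristina:two} (the present paper only quotes the lemma): non-negativity from $c\le a,b\le d$ and $d\le a+b$, then both inequalities for $\alpha_w(G_2^g)=\alpha_w(G)+d$ via a case analysis on how a stable set meets $A_2$, $B_2$ and the gem $\{p,x,y,q,z\}$. The case-by-case arithmetic you defer does check out exactly as you predict (e.g.\ $\{p,q\}\mapsto 2d$, $\{p,y\}\mapsto a+d$, $\{x,q\}\mapsto b+d$, $\{z\}\mapsto c+d$, each losing at most $d$ against the corresponding piece of $X_1$), and only the complete/anticomplete structure of the 2-join is needed, not conditions (iv)/(v) of consistency.
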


The gem blocks are useful for computing $\alpha$, but they are not preserving for our class ${\cal C}$, so
we cannot recursively decompose using gem blocks.
Instead, we will first construct the 2-join decomposition tree  $T_G$ (as in Section \ref{sec:decTh}) using marker paths of length 3,
and then we will reprocess it by replacing marker paths by gems.
As a consequence, the leaves of our decomposition tree may fail to be basic. So, we define {\it extensions} of basic graphs in the following way.

Let $P$ be a flat path of $G$ of length 3. {\it Extending} $P$ means adding a new vertex $z$
that is complete to $V(P)$ and anticomplete to the rest of the graph.
An {\it extension} of a pair $(G,\mathcal M)$, where $G$ is a graph and $\mathcal M$ a set of vertex-disjoint flat paths of $G$ of length 3,
 is any weighted graph obtained by extending the flat paths of $\mathcal M$ and giving any non-negative weights to all the vertices.
 An {\it extension} of $G$ is any graph that is an extension of $(G,\mathcal M)$ for some $\mathcal M$.
We define $\mathcal D_{\textsc{basic}}^{\textsc{ext}}$ to be the class of all graphs that are an extension of a graph from $\mathcal D_{\textsc{basic}}$.

 Let us examine the graphs in $\mathcal D_{\textsc{basic}}^{\textsc{ext}}$.
If $G$ is a line graph of a triangle-free chordless graph, then an extension $G'$ of $G$
is again a line graph (but not of a triangle-free chordless graph). Indeed, if $R$ is the root graph of $G$, then a flat path of $G$ of length 3 correspond to a path
$B=b_1b_2b_3b_4b_5$ of $R$ all of whose interior vertices are of degree 2. Hence, $G'=L(R')$, where $R'$ is the graph obtained from $R$ by adding edge $b_2b_4$
for every such $B$.
Similarly, if $G$ is a P-graph with special clique $K$ and skeleton $R$, then each flat path $P=a_1a_2a_3a_4$ of $G$ either corresponds  to a path $B=b_1b_2b_3b_4b_5$ of $R$
that belongs to a branch or a limb of $R$, or an endnode of $P$, say $a_4$ belongs to $K$
(in the latter case let $C=c_1c_2c_3c_4$ be the subpath of a limb of $R$ such that $L(R[V(C)])$ is the path $a_1a_2a_3$ in $G$).
To obtain $R'$ from $R$, for each $B$ we add the edge $b_2b_4$, and for each $C$ we add the edge $c_2c_4$.
Then an extension $G'$ of $G$ is obtained from $L(R')$ by adding clique $K$ and edges between them so that
for every $v\in K$, $N_{G'}(v)=N_G(v)\cup Z_v$, where $Z_v$ is the set of all centers of gems that were used to extend flat paths with endnode  $v$.

\begin{lemma}\label{SSbasicAlg}
There is an algorithm with the following specifications:
\begin{description}
\item[ Input:] A weighted graph $G'\in\mathcal D_{\textsc{basic}}^{\textsc{ext}}$.
\item[ Output:]
A maximum weight stable set of $G'$.
\item[ Running time:] $\mathcal O(n^4)$.
\end{description}
\end{lemma}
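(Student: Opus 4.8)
The plan is to exploit the structural description of graphs in $\mathcal D_{\textsc{basic}}^{\textsc{ext}}$ given just before the statement: every such graph $G'$ is an extension of a basic graph, hence it is either a line graph $L(R')$ (possibly after adding edges to make the skeleton non-chordless) or a P-graph-like object $L(R')$ together with a special clique $K$ and extra centers of gems attached to $K$. In both cases $G'$ is \emph{almost} a line graph, and the stable set problem on line graphs is exactly the maximum weight matching problem on the root graph, solvable in polynomial time by Edmonds' algorithm. So the strategy is: first handle the pure line graph case, then reduce the P-graph case to a bounded number of line graph subproblems by guessing how a maximum weight stable set interacts with the small special clique $K$ and the gem centers $Z_v$.

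First I would treat the case where $G'$ is an extension of a line graph of a triangle-free chordless graph. As explained in the excerpt, $G' = L(R')$ where $R'$ is obtained from the root graph $R$ by adding one chord to each extended flat path. Then a maximum weight stable set of $G'$ corresponds to a maximum weight matching of $R'$ (with the vertex weights of $G'$ becoming edge weights of $R'$), which can be computed in $\mathcal O(n^3)$ time by Edmonds' weighted matching algorithm (here $R'$ has $\mathcal O(n)$ vertices and $\mathcal O(n)$ edges, since $R'$ is obtained from a sparse root graph by adding a linear number of chords; one should note that adding these chords keeps $R'$ with $\mathcal O(n)$ edges, so matching runs well within $\mathcal O(n^4)$).

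Next I would handle the case where $G'$ is an extension of a P-graph with special clique $K = \{v_1,\dots,v_k\}$. A stable set contains at most one vertex of $K$. I would branch on which vertex $v_i \in K$ (if any) the maximum weight stable set uses, and — in the extension — on whether it uses some center $z$ of a gem that was attached to a flat path ending at that $v_i$. There are only $\mathcal O(n)$ such choices in total. Fixing such a choice, I delete from $G'$ the chosen clique vertex together with all its neighbors (and handle the gem centers attached to $K$ separately, as each such center has a small, controlled neighborhood). What remains, after removing $K$ and the $Z_v$'s, is an extension of a line graph, i.e. $L(R')$ for the modified skeleton — a pure line graph — so again a matching computation on $R'$ solves it. Taking the best over all $\mathcal O(n)$ branches gives $\alpha_w(G')$, and the stable set itself is recovered from the optimal branch together with the matching that realizes it. Each branch costs a matching computation of $\mathcal O(n^3)$, so the total is $\mathcal O(n^4)$.

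The main obstacle I expect is the bookkeeping around the gem centers: in an extension, a center $z$ attached to a flat path $P = a_1a_2a_3a_4$ is complete to $V(P)$, and when $a_4 \in K$ the center is also ``near'' the clique, so one must be careful that a stable set uses at most one vertex among $\{a_1,a_2,a_3,a_4,z\}$ that interacts with $K$, and that the reduction to a line graph after removing $K$-related vertices really does produce a graph whose stable sets are matchings — in particular one must check the center $z$ never becomes a problematic vertex (its closed neighborhood in the leftover graph is $\{a_1,a_2,a_3\}$, which is a $P_3$, i.e.\ $z$ together with $a_2$ still looks locally like an edge of the root graph after adding the chord $a_1a_3$ — wait, more carefully, the vertices $a_1a_2a_3$ plus $z$ form a graph that is $L$ of a 4-edge path or similar small gadget, which is still a line graph). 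Making this local analysis precise — verifying that every piece left after the branching is genuinely a line graph with $\mathcal O(n)$ edges so that Edmonds' algorithm applies within the claimed time — is the delicate part; the rest is a routine ``guess the intersection with a small separator, then solve matching'' argument.
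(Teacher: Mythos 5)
Your proposal follows essentially the same route as the paper: observe that $G'\setminus K$ (and hence each $G'\setminus N[v]$ for $v\in K$) is a line graph, use the fact that a stable set meets the special clique $K$ in at most one vertex to branch over $\mathcal O(n)$ cases, and solve each case by Edmonds' weighted matching algorithm in $\mathcal O(n^3)$, for $\mathcal O(n^4)$ total; your worry about the gem centers is resolved exactly as in the paper, since the centers belong to $L(R')$ and any induced subgraph of a line graph is a line graph. The only step you gloss over, which the paper spells out, is that the algorithm must first recover the structure from the bare input $G'$ (find the gems to reconstruct $G$, test for claws to distinguish the two basic cases, and locate the special clique $K$ from the claw centers), all of which the paper does within the $\mathcal O(n^4)$ budget.
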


\begin{proof}
Let $G'$ be an extension of $G\in \mathcal D_{\textsc{basic}}$. In order to compute a maximum weight stable set of $G'$, we first need to compute $G$ and then decide if $G$ a line graph of a triangle-free chordless graph or a P-graph.
Since $G$ is diamond-free, every diamond of $G'$ is contained in some gem of $G'$. So, to obtain $G$ from $G'$ it is enough to find all gems contained in $G'$. This can be done in time $\mathcal O(n^4)$. To decide whether $G$ is a line graph of a triangle-free chordless graph or a P-graph it is enough to test whether or not $G$ contains a claw (the line graph of a triangle-free chordless graph does not contain a claw, and a P-graph does).
In case $G$ is a P-graph, we find its special clique $K$ by finding all centers of claws and extend them to a maximal clique (in case there is only one center of claw, say $u$,
then we check whether $u$ is contained in a clique of size 3, and if it is we extend that clique to a maximal clique, and otherwise $K=\{ u\}$).
All this can also be done in time $\mathcal O(n^4)$.

If $G$ is the line graph of a triangle-free chordless graph, then $G'$ is also a line graph, so the maximum weighted stable set of $G'$ can be computed in time $\mathcal O(n^3)$ using Edmonds' algorithm \cite{edmonds}.

If $G$ is a P-graph, then $G'\setminus K$ is a line graph $L(R')$, and hence maximum weight stable set of $G'$ is either contained in $L(R')$, or has exactly one vertex of $K$.
So, it is enough to compute a maximum weight stable set of $L(R')$, and a maximum weight stable set of $G'\setminus N[v]$, for each $v\in K$. Since, for each $v\in K$ the graph $G'\setminus N[v]$ is a line graph, we conclude that using Edmonds' algorithm a maximum weight stable set of $G'$ can be computed in time $\mathcal O(n^4+n\cdot n^3)=\mathcal O(n^4)$.
\end{proof}

\begin{lemma}\label{SSalgD}
There is an algorithm with the following specifications:
\begin{description}
\item[ Input:] A weighted graph $G\in\mathcal D$.
\item[ Output:]
A maximum weight stable set of $G$.
\item[ Running time:] $\mathcal O(n^4m)$.
\end{description}
\end{lemma}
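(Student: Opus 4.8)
The plan is to combine the $2$-join decomposition tree $T_G$ from Section~\ref{sec:decTh} with the gem-block trick (Lemma~\ref{gem}) and the base-case algorithm of Lemma~\ref{SSbasicAlg}. First I would check whether $G$ has a $2$-join at all: if not, then by Theorem~\ref{decomposeTW} (together with Lemma~\ref{Star=Clique} and Lemma~\ref{diamondCliqueCut}, which rule out clique cutsets and diamonds for graphs in $\mathcal D$) $G$ is already basic, so $G\in\mathcal D_{\textsc{basic}}$ and we just call the algorithm of Lemma~\ref{SSbasicAlg}, which runs in $\mathcal O(n^4)$. Otherwise $G$ has a $2$-join, and by Lemma~\ref{new3} we can build the $2$-join decomposition tree $T_G$; by Lemma~\ref{DT-construction} this takes $\mathcal O(n^4m)$ time and produces a tree of depth $p\le n$ whose leaves $G_B^1,\dots,G_B^p,G^p$ are all in $\mathcal D_{\textsc{basic}}$, each carrying its set of marker flat paths of length $3$.

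The core of the algorithm is to process $T_G$ from the leaves up, maintaining at each node a weight function so that the stored $\alpha_w$ value of that node equals $\alpha_w$ of the original piece plus a known additive correction. Concretely, when we decompose $G^i$ along its extreme $2$-join $(X_1^i,X_2^i)$ into $G^{i+1}$ (containing a marker path $P^{i+1}$ replacing $X_1^i$) and $G_B^{i+1}$ (containing $X_1^i$), I would first recursively solve the subtree rooted at $G_B^{i+1}$ to obtain $\alpha_w(G_B^{i+1})$, hence the four quantities $a=\alpha_w(G[A_1^i\cup C_1^i])$, $b=\alpha_w(G[B_1^i\cup C_1^i])$, $c=\alpha_w(G[C_1^i])$, $d=\alpha_w(G[X_1^i])$ needed in Lemma~\ref{gem} — these can be read off from the solved block $G_B^{i+1}$ by restricting/deleting the marker path appropriately, since $G_B^{i+1}$ is basic and we can recompute maximum weight stable sets of its relevant induced subgraphs with Lemma~\ref{SSbasicAlg}. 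Then I replace the marker path $P^{i+1}$ inside $G^{i+1}$ by a gem with the weights $w(p)=a$, $w(x)=a+b-d$, $w(y)=d$, $w(q)=2d-a$, $w(z)=c+d$ prescribed before Lemma~\ref{gem}; by that lemma the weights stay non-negative and $\alpha_w$ of the new weighted graph equals $\alpha_w(G^{i})$ shifted by $d$. Crucially the gem is an extension of the flat marker path, so after doing this for every marker path the deepest node $G^p$ — originally basic with its marker flat paths — becomes a graph in $\mathcal D_{\textsc{basic}}^{\textsc{ext}}$, to which Lemma~\ref{SSbasicAlg} applies; similarly each $G_B^j$, once its own descendant markers are turned into gems, is in $\mathcal D_{\textsc{basic}}^{\textsc{ext}}$. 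Unwinding the additive corrections $d$ along the path from the deepest leaf back to the root recovers $\alpha_w(G)$, and tracing which vertices of the final stable set lie in the gems versus the original graph lets us reconstruct an actual maximum weight stable set of $G$.

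For the running time: building $T_G$ costs $\mathcal O(n^4m)$ by Lemma~\ref{DT-construction}; the tree has $O(n)$ leaves and $O(n)$ internal nodes, and at each node we invoke Lemma~\ref{SSbasicAlg} a constant number of times on graphs of size $O(n)$, i.e.\ $\mathcal O(n^4)$ per node, for a total of $\mathcal O(n\cdot n^4)=\mathcal O(n^5)$ over all nodes. Since $m\ge n-1$ for connected inputs (and disconnected inputs are handled componentwise, or note a graph in $\mathcal D$ is connected as the empty set would be a clique cutset), this is dominated by the $\mathcal O(n^4m)$ cost of building $T_G$, giving the claimed $\mathcal O(n^4m)$ bound overall.

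The main obstacle I anticipate is the bookkeeping around the gem substitution inside an already-partially-extended block: one must verify that replacing a marker flat path of $G^{i+1}$ by a gem does not interfere with the other marker paths still present (they are vertex-disjoint by item (iv) of the tree construction, so this is fine), and that the weights fed into Lemma~\ref{gem} are computed with respect to the \emph{original} graph $G$ rather than some intermediate weighted block — here one uses that $G_B^{i+1}$ faithfully represents $G[X_1^i]$ together with a length-$3$ path in place of $X_2^i$, so that $\alpha_w$ of $G[A_1^i\cup C_1^i]$, $G[B_1^i\cup C_1^i]$, $G[C_1^i]$, $G[X_1^i]$ are all recoverable from it by deleting the appropriate endpoints/interior of that path. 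Making this invariant precise (what each node's stored weighted graph is, and what its $\alpha_w$ equals in terms of $\alpha_w$ of the corresponding induced subgraph of $G$) and checking it is preserved by one decomposition step is the technical heart of the proof; everything else is an application of Lemmas~\ref{gem}, \ref{SSbasicAlg}, \ref{new3} and \ref{DT-construction}.
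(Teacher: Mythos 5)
Your proposal is essentially the paper's own proof: test for a 2-join, fall back on Lemma~\ref{SSbasicAlg} if there is none, otherwise build $T_G$ (Lemmas~\ref{DT-construction} and~\ref{new3}), turn each marker path $P^i$ into a gem whose weights come from four weighted stable set computations on the block $G_B^i$ with its earlier marker paths already extended, telescope via Lemma~\ref{gem}, and account the running time as $\mathcal O(n^4m+n\cdot n^4)=\mathcal O(n^4m)$, exactly as in the paper. The only slip is your closing remark that the gem weights are computed ``with respect to the original graph $G$'': when $\mathcal M_B^{i}\neq\emptyset$ the four values read off the gem-extended $G_B^{i}$ are $\alpha_w$-values of the intermediate weighted graph $G^{i-1}$ (with its previously inserted gems), and this is precisely what they must be for the repeated application of Lemma~\ref{gem} to telescope -- which is what the paper's top-down reprocessing (for $i=1,\ldots,p$) computes.
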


\begin{proof}
Check whether $G$ contains a 2-join (this can be done in time ${\cal O} (n^2m)$ by the algorithm in \cite{fast2j}).
If it does not, then by Theorem \ref{decomposeTW} $G\in \mathcal D_{\textsc{basic}}\subseteq \mathcal D_{\textsc{basic}}^{\textsc{ext}}$,
and hence we compute maximum weight stable set in ${\cal O} (n^4)$ time by Lemma \ref{SSbasicAlg}.

Otherwise, we construct the 2-join decomposition tree $T_G$ (of depth $1\leq p\leq n$) using marker paths of length 3
in ${\cal O} (n^4m)$ time by Lemma \ref{DT-construction}.
By Lemma \ref{new3} all graphs $G_B^1, \ldots ,G_B^p,G^p$ that correspond to the leaves of $T_G$ are in $\mathcal D_{\textsc{basic}}$.
We now reprocess $T_G$.

Let $P^1$ be the marker path used in the construction of $G^1$. We replace $G^1$ by the corresponding gem block $G^{1g}$.
To do this we need to compute the weights $a^1,b^1,c^1,d^1$ that need to be assigned to the vertices of the gem, and this amounts to
computing four weighted stable set problems on $G^1_B$. Since $G^1_B\in \mathcal D_{\textsc{basic}}\subseteq \mathcal D_{\textsc{basic}}^{\textsc{ext}}$
this can be done in ${\cal O} (n^4)$ time by Lemma \ref{SSbasicAlg}.
By Lemma \ref{gem} $\alpha_w (G^{1g})=\alpha_w (G)+d^1$. In all the other graphs that correspond to the vertices of $T_G$ and contain $P^1$,
we extend $P^1$ using weights $a^1,b^1,c^1,d^1$.
We continue this process for $i=2, \ldots ,p$.
So if $P^i$ is the marker path used in construction of $G^i$, we compute the weights needed to transform it into a gem block, by computing four
weighted stable set problems on $G^i_B$ whose paths in ${\cal M}^i_B$ have all already been extended. Since this graph is in
$\mathcal D_{\textsc{basic}}^{\textsc{ext}}$, this can be done in ${\cal O} (n^4)$ time by Lemma \ref{SSbasicAlg}.
In all the graphs that correspond to vertices of $T_G$ that contain $P^i$ we extend $P^i$ using calculated weights $a^i,b^i,c^i,d^i$.

The last graph we reprocess is $G^p$, and let us denote by $G^{pg}$ the graph that we obtain at the end of the reprocessing procedure.
By repeated application of Lemma \ref{gem},
$\alpha_w (G^{pg})=\alpha_w (G)+d^1+\ldots +d^p$, and so we deduce $\alpha_w (G)$.
The proof of Lemma \ref{gem} actually shows how to keep track of a stable set of $G$ whose weight is $\alpha_w (G)$.
Since $p\leq n$, this algorithm can be implemented to run in time
 $\mathcal O(n^4m+n\cdot n^4)=\mathcal O(n^4m)$.
\end{proof}

\begin{theorem}\label{SSalg}
There is an algorithm with the following specifications:
\begin{description}
\item[ Input:] A weighted graph $G\in\mathcal C$.
\item[ Output:]
A maximum weight stable set of $G$.
\item[ Running time:] $\mathcal O(n^6m)$.
\end{description}
\end{theorem}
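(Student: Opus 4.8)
The plan is to reduce the general problem on $\mathcal C$ to the clique-cutset-free case and then apply Lemma~\ref{SSalgD}. First I would compute a clique cutset decomposition tree $T$ of $G$ in time $\mathcal O(nm)$ using Theorem~\ref{th:tarjan}. By construction, $T$ has at most $n$ leaves, and since each internal split produces at least one child with no clique cutset, the total number of nodes is $\mathcal O(n)$; moreover each leaf of $T$ is a graph with no clique cutset, hence each leaf belongs to $\mathcal D$ (it is clearly still in $\mathcal C$ as an induced subgraph of $G$). The size of each node is at most $n$, so summing the sizes over all nodes is $\mathcal O(n^2)$; this bookkeeping is what controls the final running time.

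Next I would process $T$ from the leaves up. For each leaf $G'$, which is in $\mathcal D$, compute a maximum weight stable set of $G'$ in time $\mathcal O(n^4m)$ by Lemma~\ref{SSalgD}. For an internal node with split $(A,K',B)$ and blocks $G_{A}=G[A\cup K']$, $G_{B}=G[K'\cup B]$, the standard clique-cutset recombination for stable sets applies: a maximum weight stable set of the node either avoids $K'$ entirely, or uses exactly one vertex $v\in K'$. Concretely, one first (recursively) solves the two blocks with the original weights to handle the ``avoid $K'$'' case by taking the union of a maximum weight stable set of $G_{A}\setminus K'$ and of $G_{B}\setminus K'$; then for each $v\in K'$ one combines a maximum weight stable set of $G_{A}\setminus (N(v)\setminus\{v\})$ (restricted to $A\cup\{v\}$, so $v$ is forced in) with one of $G_{B}\setminus (N(v)\setminus\{v\})$ restricted to $B$, adding $w(v)$. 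To make this efficient without re-solving the blocks from scratch for each $v$, one standard device is to push weights: when recursing into $G_{A}$, solve it with a modified weight function that also records, for each $v\in K'$, the maximum weight of a stable set of $G_{A}$ containing $v$; this is computable by the same recursive call structure. One then takes a maximum over $v\in K'$ of these ``$v$-forced'' values from both blocks (minus $w(v)$ counted once) together with the ``$K'$-avoided'' value.

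Counting the running time: the clique cutset tree has $\mathcal O(n)$ nodes, and at each node we invoke the $\mathcal O(n^4m)$ algorithm of Lemma~\ref{SSalgD} on a block of size at most $n$; the recombination steps at internal nodes cost only polynomially less. Since the leaves are where the expensive calls happen and there are at most $n$ of them, the total is $\mathcal O(n\cdot n^4m)=\mathcal O(n^5m)$. Here I should note that the claimed bound in the statement is $\mathcal O(n^6m)$, so there is slack; the extra factor presumably comes from a coarser analysis in the paper (for instance, treating each of the $\mathcal O(n)$ blocks as possibly needing the full decomposition machinery applied to a graph whose 2-join decomposition tree itself has depth up to $n$, or from re-solving blocks once per vertex of the clique cutset). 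In any case, bounding each of the $\mathcal O(n)$ clique-cutset blocks by $\mathcal O(n^5m)$ (allowing a linear blow-up from the per-vertex reprocessing in the recombination) yields $\mathcal O(n^6m)$ safely.

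The main obstacle I anticipate is the careful handling of the clique-cutset recombination so that forcing a vertex $v\in K'$ into the stable set does not require an independent from-scratch call to Lemma~\ref{SSalgD} for every $v$, which would cost an extra factor of $n$ per block; this is handled by solving each block once while simultaneously tabulating the best $v$-containing stable set for all $v\in K'$, which is why working with the $\mathcal D$-algorithm as a black box that returns enough information (not merely the optimum value) matters. The second, more routine, point to get right is verifying that every leaf of the clique cutset decomposition tree genuinely lies in $\mathcal D$ — it has no clique cutset by property~(iii) of the tree, and it is $(\text{theta},\text{wheel})$-free since it is an induced subgraph of $G$ — so that Lemma~\ref{SSalgD} is applicable.
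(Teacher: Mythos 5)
Your overall strategy is the paper's: build the clique cutset decomposition tree via Theorem~\ref{th:tarjan}, solve the leaves (which, as you correctly argue, lie in $\mathcal D$) by Lemma~\ref{SSalgD}, and recombine across clique cutsets by the standard ``avoid $K'$ or use exactly one $v\in K'$'' case analysis. The gap is in how you realize the recombination and charge its cost. First, the device you lean on for efficiency --- that a single call to the algorithm of Lemma~\ref{SSalgD} can simultaneously ``record, for each $v\in K'$, the maximum weight of a stable set containing $v$'' --- is not justified: that algorithm is a black box returning one maximum weight stable set, and nothing in the paper provides such a tabulation; so your $\mathcal O(n^5m)$ claim does not stand. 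Second, and more seriously, your fallback accounting is also not sound as stated: the per-$v$ subproblems you pose live on $G_{A}\setminus(N(v)\setminus\{v\})$ where $G_{A}$ is an \emph{internal} block. Such a graph is neither a node of the tree nor necessarily in $\mathcal D$ (deleting vertices can create clique cutsets), so Lemma~\ref{SSalgD} cannot be invoked on it, and solving it instead by a fresh recursive call to the whole algorithm makes the ``$\mathcal O(n^{5}m)$ per block, hence $\mathcal O(n^{6}m)$ total'' charge circular (and, done naively level by level, the per-vertex branching can multiply across levels rather than add).

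What is missing is precisely the point the paper gets from Tarjan's method: all auxiliary computations can be pushed down to the \emph{leaves} of $T$. Processing the tree bottom-up, at each internal node one only needs, for the already-processed side, the value of a best stable set avoiding $K'$ and, for each $v\in K'$, a best one containing $v$; these can be encoded as at most $|K'|+1$ maximum weight stable set computations on the corresponding leaf with modified (still nonnegative) weights --- e.g.\ zeroing out $K'$ for the ``avoid'' case and giving $v$ a dominating weight for the ``force $v$'' case --- and then absorbed into reweighted cutset vertices of the other block. This yields $\mathcal O(n^2)$ computations, each on a graph of $\mathcal D$, hence each $\mathcal O(n^4m)$ by Lemma~\ref{SSalgD}, for the stated $\mathcal O(n^6m)$. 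Without this organization (or an equivalent argument that your per-$v$ subproblems stay within the reach of Lemma~\ref{SSalgD}), the running-time bound is not established.
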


\begin{proof}
By  Theorem \ref{th:tarjan} we construct the  clique cutset decomposition tree $T$ of $G$ in ${\cal O} (nm)$ time.
So all the leaves of $T$ are graphs from $\mathcal D$.
By using Tarjan's method from \cite{tarjan} to compute a maximum weight stable set of $G$ it is enough to compute $\mathcal O(n^2)$  maximum weight stable sets
on the leaves of $T$ (each one of which can be computed in ${\cal O} (n^4m)$ time by Lemma \ref{SSalgD}).
Therefore, this algorithm can be implemented to run in time
$\mathcal O(nm+n^2\cdot n^4m)=\mathcal O(n^6m)$.
\end{proof}

\section{Vertex coloring}
\label{sec:vCol}

A {\it k-coloring} of a graph $G$ is a function $c:V(G)\rightarrow \{1,2,\ldots,k\}$, such that for every $uv\in E(G)$,  $c(u)\neq c(v)$. We say that $G$ is {\it $k$-colorable} if it admits a $k$-coloring. The {\it chromatic number} of $G$, denoted by $\chi(G)$, is the smallest number $k$ such that $G$ is $k$-colorable.
In this section we give a polynomial-time coloring algorithm for ${\cal C}$ and prove that every $G\in\mathcal C$ is $\max\{3,\omega(G)\}$-colorable.

A graph is {\it sparse} if every edge is incident with at least one vertex of degree at most 2. Note that every sparse graph is chordless. A proper {\it 2-cutset} of a connected graph $G$ is a pair of non-adjacent vertices $a,b$ such that there is a partition $(X,Y,\{a,b\})$ of $V(G)$ with $X$ and $Y$ anticomplete, both $G[X\cup\{a,b\}]$ and $G[Y\cup\{a,b\}]$ contain a path from $a$ to $b$ and neither $G[X\cup\{a,b\}]$ nor $G[Y\cup\{a,b\}]$ is a chordless path. We say that $(X,Y,\{ a,b\} )$ is a {\it split} of this proper 2-cutset. The {\it blocks of decomposition} $G_X$ and $G_Y$ w.r.t.\ this cutset are defined as follows. Block $G_X$ (resp.\ $G_Y$) is the graph obtained by taking $G[X\cup\{a,b\}]$ (resp.\ $G[Y\cup\{a,b\}]$) and adding a new vertex $u$ (resp.\ $v$) complete to $\{a,b\}$ (and anticomplete to the rest).

A decomposition theorem for the class of chordless graphs is proved in \cite{lmt:isk4}. An improvement of this theorem, that is an {\it extreme decomposition} for this class, is proved in \cite{mft:chordless}. We give both results in the following theorem.

\begin{theorem}[\cite{lmt:isk4,mft:chordless}]\label{DecomposChordless}
 If $G$ is a 2-connected chordless graph, then either $G$ is sparse or $G$ admits a proper 2-cutset. Additionally, if $(X,Y,\{ a,b\} )$ is a split of a proper 2-cutset of $G$ such that $|X|$ is minimum among all such splits, then $a$ and $b$ both have at least two neighbors in $X$, and $G_X$ is sparse.
\end{theorem}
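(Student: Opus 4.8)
The plan is to prove both assertions of the theorem together, by induction on $|V(G)|$.

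\emph{Preliminary remark and the first part.} A key observation is that a 2-connected chordless graph $G$ on at least $3$ vertices is \emph{minimally} 2-connected: for any edge $e=uv$, if $G-e$ were 2-connected, Menger's theorem would produce two internally disjoint $u$–$v$ paths in $G-e$, each of length $\ge 2$ (no parallel edges), whose union would be a cycle having $e$ as a chord, contradicting chordlessness. Now assume $G$ is 2-connected, chordless, and not sparse, so there is an edge $xy$ with $\deg x,\deg y\ge 3$. By the remark $G-xy$ is connected but not 2-connected, so it has a cutvertex $c$; using 2-connectedness of $G$ and $\deg x,\deg y\ge 3$ one checks that $c\notin\{x,y\}$, that $(G-xy)-c$ has exactly two components $A\ni x$ and $B\ni y$ with $|A|,|B|\ge 2$, and that $xy$ is the only edge between $A$ and $B$. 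I claim that if $c\not\sim x$ then $\{c,x\}$ is a proper 2-cutset, with split $(A\setminus\{x\},\,B,\,\{c,x\})$. Indeed the two sides are anticomplete; $c$ has a neighbour in each of $A$ and $B$ (else $x$ or $y$ would be a cutvertex of $G$), so both blocks contain a $c$–$x$ path; $G[A\cup\{c\}]$ is not a chordless path, since any chordless path on $A\cup\{c\}$ must have its endpoints in $\{x,c\}$ (an endpoint in $A\setminus\{x\}$ would be a degree-$1$ vertex of $G$) and hence would run from $x$ to $c$, forcing $c\sim x$; and $G[B\cup\{c,x\}]$ is not a chordless path, since such a path would run $x$–$y$–$\cdots$–$c$, making $y$ internal and thus of degree $2$ in $G$, contradicting $\deg y\ge 3$. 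Symmetrically $\{c,y\}$ works if $c\not\sim y$.

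\emph{The amalgam case.} It remains to handle the case in which every cutvertex of $G-xy$ is adjacent to both $x$ and $y$; then $xyc$ is a triangle and $\deg c\ge 4$. Since $x$ and $y$ are not cutvertices of $G-xy$, a block--cut-tree argument shows $G-xy$ is the union of exactly two 2-connected chordless graphs $B_x\ni x$ and $B_y\ni y$ meeting in the single vertex $c$, with $cx\in E(B_x)$, $cy\in E(B_y)$ and $\deg_{B_x}x,\deg_{B_y}y\ge 2$; in particular $|B_x|,|B_y|<|V(G)|$, so the induction hypothesis applies to $B_x$. If $B_x$ has a proper 2-cutset, it transfers to one of $G$: the set $B_y\setminus\{c\}$ attaches to $G$ only at $c$ and through the edge $xy$, so it can be absorbed into the side of the $B_x$-cutset containing both $c$ and $x$ (a careful choice of the $B_x$-cutset, invoking the extreme part of the induction hypothesis, puts $c$ and $x$ on a common side; the straddling configuration is resolved by extracting a different proper 2-cutset of $G$ directly). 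If instead $B_x$ is sparse, then the edge $cx$ forces $x$ or $c$ to have degree $\le 2$ in $B_x$, and combining this with the triangle $xyc$ and the symmetric information about $B_y$ locates a proper 2-cutset of $G$ explicitly. I expect this to be the main obstacle: checking that each reduction genuinely yields a \emph{proper} 2-cutset (non-adjacent cutset vertices, two non-path blocks) and clearing the handful of degenerate configurations.

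\emph{Second part: minimality of $|X|$.} Let $(X,Y,\{a,b\})$ be a split of a proper 2-cutset of $G$ with $|X|$ minimum; since $G[Y\cup\{a,b\}]$ is not a chordless path we have $|Y|\ge 2$, hence $|V(G_X)|=|X|+3<|V(G)|$. I first show $a$ (and by symmetry $b$) has at least two neighbours in $X$. It has at least one, say $x_0$, by 2-connectedness. If $x_0$ were its only $X$-neighbour and $x_0\not\sim b$, then $\{x_0,b\}$ is a proper 2-cutset with split $(X\setminus\{x_0\},\,Y\cup\{a\},\,\{x_0,b\})$: the sides are anticomplete, both blocks contain an $x_0$–$b$ path, and neither block is a chordless path, since a chordless path on $G[X\cup\{b\}]$ (resp.\ on $G[Y\cup\{a,b,x_0\}]$) would either have a degree-$1$ vertex of $G$ as an endpoint or would force $G[X\cup\{a,b\}]$ (resp.\ $G[Y\cup\{a,b\}]$) to be a chordless path, contradicting properness of the original cutset. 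This contradicts the minimality of $|X|$. The subcase $x_0\sim b$, where $a$–$x_0$–$b$ is an induced path, is treated separately.

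\emph{Second part: $G_X$ is sparse.} Recall $G_X$ is $G[X\cup\{a,b\}]$ together with a new degree-$2$ vertex $u$ adjacent to $a$ and $b$. It is chordless, because a cycle through $u$ cannot acquire a chord: $u$ has degree $2$ and $a\not\sim b$, so any such chord would already be a chord of the corresponding cycle of $G$ obtained by replacing $a$–$u$–$b$ by an $ab$-path through $X$. It is also 2-connected, which is exactly what properness of the 2-cutset on the $X$-side (with the vertex $u$ added) guarantees. If $G_X$ were not sparse, then by the first part (which holds for the smaller graph $G_X$ by induction) $G_X$ would have a proper 2-cutset; since $u$ has degree $2$ while the two cutset vertices each need two neighbours on each side, $u$ is an interior vertex of one of the two blocks, and deleting $u$ and re-identifying the corresponding subpath with the original $ab$-path turns this into a proper 2-cutset of $G$ whose side contained in $X$ is a proper subset of $X$ — again contradicting minimality of $|X|$. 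Hence $G_X$ is sparse, completing the induction. Throughout, the real bookkeeping lies in the properness conditions: guaranteeing at each step that the new cutset vertices are non-adjacent and that neither new block is a chordless path.
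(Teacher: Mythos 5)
This theorem is not proved in the paper at all --- it is quoted from \cite{lmt:isk4,mft:chordless} --- so your attempt has to be judged on its own, and as it stands it is an outline with genuine gaps, mostly at the places you yourself flag. First, the ``amalgam case'' of part one (the cutvertex $c$ of $G-xy$ adjacent to both $x$ and $y$) is never actually handled: ``a careful choice of the $B_x$-cutset\dots'', ``the straddling configuration is resolved\dots'', ``I expect this to be the main obstacle'' are placeholders, not arguments. In fact that case is vacuous, which you missed: $c\sim x$, $c\sim y$ and $xy\in E(G)$ would give a triangle, but a $2$-connected chordless graph on at least $4$ vertices is triangle-free (pick $z$ outside a triangle $uvw$, take two paths from $z$ meeting the triangle only at their ends, say at $u$ and $v$; rerouting through $w$ yields a cycle of which $uv$ is a chord), and if $|V(G)|=3$ then $G=K_3$ is sparse. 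So the whole $B_x,B_y$ induction is unnecessary, but as written that case is simply unproven. Second, in the ``at least two neighbours'' step the subcase $x_0\sim b$ is announced as ``treated separately'' and never treated; it does need an argument (for instance: an $x_0$--$b$ path inside $G[X\cup\{b\}]$ avoiding the edge $x_0b$, together with $ax_0$ and an $ab$-path through $Y$, gives a cycle with chord $x_0b$; hence no such path exists, which forces $N(b)\cap X=\{x_0\}$ and then $x_0$ is a cutvertex of $G$, a contradiction).

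Third, in the sparseness step your reason for excluding the marker vertex $u$ from the cutset of the hypothetical proper 2-cutset of $G_X$ is incorrect: the definition of a proper 2-cutset does \emph{not} require each cutset vertex to have two neighbours on each side --- that is exactly the ``extreme'' property being proven, and only for minimum splits. A proper 2-cutset of $G_X$ of the form $\{u,w\}$ with $w\in X$ is not excluded a priori, and it does not transfer to $G$ by deleting $u$; it must be handled separately (one shows $a$ and $b$ lie on different sides, that chordlessness prevents $w$ from being adjacent to both $a$ and $b$, and that $\{a,w\}$ or $\{b,w\}$ is then a proper 2-cutset of $G$ with a side strictly inside $X$). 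Even in the case where $u$ is interior to a block, the properness of the transferred cutset of $G$ (both blocks contain a path between the cutset vertices, neither is a chordless path) is asserted rather than checked, and you explicitly defer this ``bookkeeping'', which is where the real work of the cited proofs lies. Two smaller slips: in the case $c\not\sim x$ the correct reason the block $G[A\cup\{c\}]$ is not a chordless path is that $x$ would then have degree $2$ in $G$ (not that it ``forces $c\sim x$''), and the chordlessness of $G_X$ needs the $a$--$u$--$b$ subpath replaced by an $ab$-path through $Y$, not through $X$. So the skeleton of the argument is reasonable, but several steps that would need to be carried out are missing or justified by incorrect reasons.
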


A {\it k-edge-coloring} of a graph $G$ is a function $c:E(G)\rightarrow \{1,2,\ldots,k\}$, such that for every two distinct edges with a common vertex, say $uv$ and $uw$,
 $c(uv)\neq c(uw)$. $G$ is {\it $k$-edge-colorable} if it admits a $k$-edge-coloring. The {\it edge-chromatic number} of $G$ is the smallest number $k$ such that $G$ is $k$-edge-colorable.

The edge-coloring of chordless graphs is studied in \cite{mft:chordless}, where the authors obtained the following result.
For a graph $G$, let $\Delta(G)=\max\{\deg(v)\,:\,v\in V(G)\}$ and $\delta(G)=\min\{\deg(v)\,:\,v\in V(G)\}$.

\begin{theorem}[\cite{mft:chordless}]\label{ColorChordless}
Every  chordless graph $G$ is $\{3,\Delta(G)\}$-edge-colorable. Moreover, there is an $\mathcal O(n^3m)$-time algorithm that finds such an edge-coloring.
\end{theorem}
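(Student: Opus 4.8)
The plan is to establish $\chi'(G)\le\max\{3,\Delta(G)\}$ by induction on $|V(G)|$, guided by the structure in Theorem~\ref{DecomposChordless}, and then to observe that every step is constructive and cheap. Since $\chi'(G)\ge\Delta(G)$ always, the case $\Delta(G)\le 2$ is just the elementary fact that a disjoint union of paths and cycles is $3$-edge-colorable (odd cycles being the only obstruction to $2$), so the content is that a chordless graph with $\Delta(G)\ge 3$ is Class~$1$. The first reduction is to the $2$-connected case: if $G$ is disconnected or has a cut vertex, color each block $B$ with $\max\{3,\Delta(B)\}\le\max\{3,\Delta(G)\}$ colors by induction, and at each cut vertex $v$ permute the color classes inside the incident blocks so that the sets of colors appearing on $v$'s edges in distinct blocks become pairwise disjoint; this is possible since those block-degrees of $v$ sum to $\deg_G(v)\le\max\{3,\Delta(G)\}$. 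Thus we may assume $G$ is $2$-connected, and Theorem~\ref{DecomposChordless} then tells us that either $G$ is sparse or $G$ has a proper $2$-cutset.

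If $G$ is sparse, we are done when $\Delta(G)\le 2$ (then $G$ is a cycle), so suppose $\Delta(G)\ge 3$. In a sparse graph every edge has an endpoint of degree at most~$2$, so the vertices of degree at least~$3$ form a stable set; in particular the vertices of maximum degree form a stable set, and Fournier's classical theorem (a graph whose maximum-degree vertices form a stable set is Class~$1$) gives $\chi'(G)=\Delta(G)$.

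Now suppose $G$ is $2$-connected and has a proper $2$-cutset; then $G$ is not a cycle, so $\Delta(G)\ge 3$ and $\max\{3,\Delta(G)\}=\Delta(G)$. Take an extreme split $(X,Y,\{a,b\})$, i.e.\ one with $|X|$ minimum; by Theorem~\ref{DecomposChordless}, $a$ and $b$ each have at least two neighbors in $X$ and the block $G_X$ is sparse, hence chordless. The block $G_Y$ is chordless as well: since $ab\notin E(G)$, any cycle of $G_Y$ through its marker vertex $v$ reads $v\,a\,Q\,b\,v$ for an $ab$-path $Q$ of $G[Y\cup\{a,b\}]$, and $Q$ must be induced --- otherwise, closed up with any $ab$-path of $G[X\cup\{a,b\}]$ (which exists and is internally disjoint from $Q$), it would give a cycle of $G$ carrying a chord. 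Both blocks have strictly fewer vertices than $G$ (here $|X|\ge 2$) and maximum degree at most $\Delta(G)$ (the marker vertices have degree~$2$, and $\deg_{G_X}(a)=|N_G(a)\cap X|+1\le\deg_G(a)$ because $a$ has a neighbor in $Y$ by $2$-connectedness, and symmetrically). So the sparse case gives a $\Delta(G)$-edge-coloring $c_X$ of $G_X$ and induction gives one, $c_Y$, of $G_Y$, and it remains to merge them. Here $E(G)$ is the disjoint union of $E(G[X\cup\{a,b\}])$ and $E(G[Y\cup\{a,b\}])$, and $c_X,c_Y$ already agree at every vertex except possibly $a,b$; writing $S_a$ (resp.\ $S_b$) for the set of colors $c_Y$ places on the edges from $a$ (resp.\ $b$) into $Y$, it suffices to re-edge-color $G[X\cup\{a,b\}]$ with $\Delta(G)$ colors so that no edge at $a$ uses a color of $S_a$ and no edge at $b$ uses a color of $S_b$; appending this to the restriction of $c_Y$ then finishes the proof. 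Because $a$ and $b$ each have at least two neighbors in $X$, we have $|S_a|,|S_b|\le\Delta(G)-2$, so there are at least two colors of slack at each of $a$ and $b$, and this slack together with the degree-$2$ marker vertex $u$ of $G_X$ is what the re-coloring exploits. One cannot in general achieve it by a single global permutation of the colors of $c_X$ (the constraints at $a$ and at $b$ may be jointly unsatisfiable by one permutation), so the re-coloring has to be built directly by Vizing-type fan and Kempe-chain manipulations inside the sparse graph $G[X\cup\{a,b\}]$ (equivalently: temporarily attach $|S_a|$ pendant edges at $a$ and $|S_b|$ at $b$ to soak up the colors of $S_a$ and $S_b$, color with $\Delta(G)$ colors, and delete them). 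Making this constrained re-coloring work is where the real effort lies, and is the step I expect to be the main obstacle.

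For the running time, Theorem~\ref{DecomposChordless} is effective: a minimum-side proper $2$-cutset is found by testing the $\mathcal{O}(n^2)$ candidate pairs $\{a,b\}$ with elementary connectivity computations, in $\mathcal{O}(n^2m)$ time. Each decomposition step removes at least one vertex and spins off a (never re-decomposed) sparse block, so the recursion has $\mathcal{O}(n)$ levels; on each level one $\Delta$-edge-colors a sparse graph and performs one constrained re-coloring at the cutset, each doable in $\mathcal{O}(n^2m)$ time by Vizing/Fournier-type routines. Multiplying the $\mathcal{O}(n)$ levels by the $\mathcal{O}(n^2m)$ work per level yields the claimed $\mathcal{O}(n^3m)$ bound.
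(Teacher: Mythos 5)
First, note that the paper does not prove Theorem~\ref{ColorChordless} at all: it is imported verbatim from \cite{mft:chordless}, and the relevant machinery that paper develops (and that the present paper quotes as Lemma~\ref{ListChord} and extends in Lemma~\ref{2list2sparse}) is a \emph{list}-edge-coloring lemma for sparse graphs. Your skeleton --- reduce to the $2$-connected case, invoke Theorem~\ref{DecomposChordless}, handle sparse graphs via the observation that vertices of degree at least $3$ form a stable set (Fournier), and induct on a minimum-side proper $2$-cutset, merging at $\{a,b\}$ --- is indeed the same overall strategy as \cite{mft:chordless}, and the peripheral verifications (blocks and $G_Y$ are chordless, $\Delta$ does not increase, $|S_a|,|S_b|\le \Delta-2$) are correct.

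However, there is a genuine gap exactly where you flag it, and it is not a technicality but the heart of the theorem: you must recolor the sparse side $G[X\cup\{a,b\}]$ with $\Delta$ colors so that the edges at $a$ avoid the \emph{specific} set $S_a$ and the edges at $b$ avoid the \emph{specific} set $S_b$ simultaneously. Neither of your two suggested fixes closes this. A global permutation of the colors of $c_X$ cannot in general satisfy both constraints at once, as you yourself observe; and the pendant-edge trick fails because attaching $|S_a|$ pendants at $a$ and $|S_b|$ at $b$ and coloring with $\Delta$ colors only guarantees that \emph{some} $|S_a|$ colors are kept off the real edges at $a$ and \emph{some} $|S_b|$ colors off those at $b$ --- it gives no control over \emph{which} colors these are, so after deleting the pendants you are back to the same two-vertex prescription problem. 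What is actually needed is a constrained (list-type) edge-coloring statement for sparse graphs in which prescribed lists of admissible colors are attached to the edges at $a$ and at $b$ (here $\{1,\dots,\Delta\}\setminus S_a$ and $\{1,\dots,\Delta\}\setminus S_b$, of sizes at least $\deg_{G_X}(a)$ and $\deg_{G_X}(b)$); this is precisely the content of Lemma~\ref{ListChord}, which has its own nontrivial inductive proof and does not follow from Vizing's or Fournier's theorems by ``fan and Kempe-chain manipulations'' without a careful argument that the two prescriptions can be met together. Until that lemma (or an equivalent) is proved, both the coloring bound and the $\mathcal O(n^3m)$ algorithmic claim remain unestablished.
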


In this section we will prove a variant of the previous theorem (see  Lemma \ref{TwoColoredBasic}). The following result is an important step towards obtaining a $\max\{3,\omega(G)\}$-coloring for our basic classes.

\begin{lemma}[\cite{mft:chordless}]\label{ListChord}
Let $G=(V,E)$ be a sparse graph and suppose that a list $L_{uv}$ of colors is associated with each edge $uv\in E$. Let $S$ be a stable set of $G$ that contains all vertices of $G$ of degree at least 3. Suppose that for every vertex $u\in S$, all edges incident to $u$ receive the same list. If for each edge $uv\in E$, $|L_{uv}|\geq \max\{\deg(u),\deg(v)\}$ and for each edge $uv\in E$ with no end in $S$, $|L_{uv}|\geq 3$, then there is an edge-coloring $c$ of $G$ such that, for each edge $uv\in E$, $c(uv)\in L_{uv}$.
Furthermore, there is an $\mathcal O(nm)$-time algorithm that finds such an edge-coloring $c$.
\end{lemma}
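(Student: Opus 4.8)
The plan is to argue by induction on $|E(G)|$, repeatedly removing a low-degree vertex or a whole branch of $G$ and extending the colouring. Note first that the hypotheses are inherited by subgraphs: deleting vertices or edges only lowers degrees, keeps $S$ a stable set that still contains all vertices of degree at least~$3$, keeps the property that all edges at a vertex of $S$ carry the same list, and leaves every list of size at least $\max\{\deg(u),\deg(v)\}$ (and at least~$3$ for edges with no end in $S$); so every smaller graph arising in the recursion again satisfies the hypotheses. Two easy reductions handle degenerate cases. If $G$ has a vertex $v$ with $\deg(v)\le 1$, delete its unique incident edge $e=vv'$ (if any), colour $G-e$ by induction, and colour $e$ last: $|L_e|\ge\deg(v')$ while $e$ is adjacent to only $\deg(v')-1$ coloured edges, so a colour remains. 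If some component of $G$ is a cycle $C$, colour and delete it: if some edge of $C$ has a list of size at least~$3$ one colours that edge last and the rest greedily (a path with lists of size at least~$2$), and otherwise every edge of $C$ meets $S$, which, $S$ being stable, makes $C$ an even cycle with all lists of size at least~$2$, hence list-edge-colourable. So we may assume $G$ has no cyclic component and every vertex has degree~$2$ or at least~$3$.

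Now comes the structural step. As $G$ is sparse, every edge has an endpoint of degree exactly~$2$; equivalently, the vertices of degree at least~$3$ form a stable set contained in $S$, so $E(G)$ partitions into the edge sets of its branches $B=u\,z_1\cdots z_{k-1}\,u'$, where $z_1,\dots,z_{k-1}$ have degree~$2$ and $u,u'$ have degree at least~$3$; each branch has length $k\ge 2$ since $S$, being stable, forbids an edge between two vertices of degree at least~$3$. Fix such a $B$ and set $G'=G\setminus\{z_1,\dots,z_{k-1}\}$; then $\deg_{G'}(u)=\deg_G(u)-1$, $\deg_{G'}(u')=\deg_G(u')-1$ (with a decrease of~$2$ when $u=u'$), all other degrees are unchanged, $G'$ satisfies the hypotheses, and by induction it has a valid colouring $c'$. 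It remains to extend $c'$ to the $k$ edges $e_0=uz_1,\dots,e_{k-1}=z_{k-1}u'$, i.e.\ to list-edge-colour the path $B$ so that $c(e_0)$ avoids the $\deg_G(u)-1$ colours used by $c'$ on the other edges at $u$, and $c(e_{k-1})$ avoids the $\deg_G(u')-1$ colours used on the other edges at $u'$.

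This extension is the crux, and it is where the remaining hypotheses are used. Since $u\in S$, the edge $e_0$ has list $L_u$ of size at least $\deg_G(u)$, so after deleting the $\deg_G(u)-1$ forbidden colours at least one colour remains for $e_0$; symmetrically for $e_{k-1}$; and each interior edge has a list of size at least~$2$, of size at least~$3$ as soon as both its (degree-$2$) endpoints lie outside $S$. If some interior edge has a list of size at least~$3$, colour $e_0$ and $e_{k-1}$ within their nonempty available lists, then colour the path from both ends inward, colouring that size-$3$ edge last. Otherwise every interior edge meets $S$, and since $S$ is stable and $z_1,z_{k-1}\notin S$ this forces the $z_i$ to alternate strictly outside/inside $S$, so $k$ is even and $B$ is rigidly structured; I expect this rigid subcase to be the only genuine difficulty, and one would treat it using the equal-list property at the vertices $z_2,z_4,\dots$ of $S$ together with the fact (from sparseness) that $uu'\notin E(G)$, or else circumvent it by strengthening the induction hypothesis so that $c'$ is produced with prescribed control on the colours appearing at the ends of the deleted branch. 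Finally, each reduction removes at least one vertex or edge, each step is polynomial, and with standard bookkeeping the whole algorithm runs in $\mathcal{O}(nm)$ time.
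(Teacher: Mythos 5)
Your reductions (degree-at-most-one vertices, cycle components, peeling off the interior of a branch) are fine, but the argument has a genuine gap exactly at the step you flag and then skip: the ``rigid subcase'' is not a routine leftover, it is the whole difficulty of the lemma, and the extension scheme you rely on actually fails there. The simplest instance is a branch $B=uz_1u'$ of length $2$, which lands in your ``otherwise'' case vacuously (no interior edges). Take $G=K_{2,3}$ with $u,u'$ the degree-$3$ vertices, $S=\{u,u'\}$, and all edges listed $\{1,2,3\}$; this satisfies every hypothesis. Removing $z_1$ leaves a $4$-cycle $G'$, and a perfectly valid list colouring of $G'$ may use colours $\{1,2\}$ on the two remaining edges at $u$ and also $\{1,2\}$ on the two remaining edges at $u'$; then both $uz_1$ and $z_1u'$ are forced to colour $3$, which is impossible since they share $z_1$, even though $G$ itself is colourable (a different $c'$ extends). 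A length-$4$ rigid branch fails the same way: if the available sets at both ends reduce to $\{a\}$ and the common interior list is $\{a,x\}$, both interior edges are forced to $x$. So one cannot colour $G'$ by an unconstrained induction and extend; one needs control over which colours the inductive colouring leaves free at the two branch ends (a strengthened induction hypothesis, a recolouring/permutation argument, or a different decomposition). Your proposal only gestures at this (``I expect \dots one would treat it \dots or else circumvent it by strengthening the induction hypothesis'') without carrying it out, and that is precisely the missing content; the $\mathcal O(nm)$ running time is likewise only asserted.

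For calibration: this lemma is imported from \cite{mft:chordless} and the present paper gives no proof of it, so there is no in-paper argument to compare against; but judged on its own, your write-up identifies the right obstacle and then leaves it unresolved, so it does not constitute a proof.
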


Let $v_1,\ldots,v_k$, where $1\leq k\leq 3$, be some vertices of a branch $B$ of $G$, such that they do not induce a path of length 2. Furthermore, let the list of colors $L_i$, $|L_i|\geq 2$, be associated with $v_i$, for $1\leq i\leq k$, such that if $v_i$ and $v_j$ are adjacent, for some $1\leq i<j\leq k$, then $L_i\cap L_j\neq \emptyset$. Note that branch $B$ can be edge-colored with $\left|\bigcup_{i=1}^k L_i\right|$ colors, so that every edge incident with $v_i$ is colored with a color from $L_i$, for $1\leq i\leq k$. Indeed, if no two of the vertices from the set $\{v_1,\ldots,v_k\}$ are adjacent, then we can color $B$ greedily (starting from one endnode of $B$). If w.l.o.g.\ $v_1$ and $v_2$ are adjacent, then we can obtain the desired edge-coloring by first coloring the edge $v_1v_2$ (with a color from $L_1\cap L_2$) and then greedily coloring the rest of the branch (starting from the other edge incident with $v_1$ and the other edge incident with $v_2$). We say that such an edge-coloring of the branch $B$ is {\em according to the lists} $L_1, \ldots,L_k$.

A vertex $v$ of $G$ is {\em free} if it is of degree 2 and both of its neighbors are also of degree 2.
Vertices $u$ and $v$ of $G$ are {\it parallel} if they are of degree 2, and contained in distinct parallel branches of $G$.
A {\it ring} of $G$ is a hole of $G$ that has at most one vertex that is of degree at least 3 in $G$.
A {\it small theta} of $G$ is an induced subgraph of $G$ isomorphic to $K_{2,3}$ ($K_{2,3}$ is complete bipartite graph whose sides have sizes 2 and 3). Note that if $H$ is a small theta of a sparse graph $G$, then only degree 3 vertices of $H$ can have neighbors in $G\setminus H$.

A set $S=\{v_1, \ldots,v_k\}$, $1\leq k\leq 3$, of vertices is {\it good} if the following hold:
\begin{itemize}
\item[(i)] at most one of $v_1,\ldots,v_k$ is of degree 2 and not free;
\item[(ii)] if $k=3$, then $S$ is not contained in a ring of $G$ of length 5;
\item[(iii)] if $k=3$ and some $v_i\in S$ is of degree 2 and not free, then the two vertices from $S\setminus\{v_i\}$ are not adjacent.
\end{itemize}

\begin{lemma}\label{2list2sparse}
Let $G$ be a triangle-free sparse graph with $\delta(G)=2$, and let $v_1,\ldots,v_k\in V(G)$, $1\leq k\leq 3$, be such that $\{v_1,\ldots,v_k\}$ does not induce a path of length 2. To vertices $v_i$, for $1\leq i\leq k$, the lists of colors $L_i\subseteq\{1,\ldots,s\}$, where $s=\max\{3,\Delta(G)\}$, are associated so that $|L_i|\geq\deg(v_i)$. Furthermore, if $v_i$ and $v_j$ are adjacent, for some $1\leq i<j\leq k$, then $L_i\cap L_j\neq \emptyset$. If one of the following holds:
\begin{itemize}
\item[(1)] $k=3$, $\{v_1,\ldots,v_k\}$ is contained in a ring of length 5 and $\left|L_1\cup L_2\cup L_3\right|\geq 3$;
\item[(2)] $k\leq 3$ and the set $\{v_1,\ldots,v_k\}$ is good;
\item[(3)] $k=2$, and if $v_1$ and $v_2$ are both of degree 2, then $\{v_1,v_2\}$ is not contained in a small theta of $G$;
\end{itemize}
then there is an $s$-edge-coloring of $G$, such that every edge incident with $v_i$ is colored with a color from $L_i$, for $1\leq i\leq k$. Furthermore, there is an $\mathcal O(nm)$-time algorithm that finds such an edge-coloring.
\end{lemma}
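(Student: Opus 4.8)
The plan is to deduce the statement from Lemma~\ref{ListChord}. Since $G$ is sparse, its set $S$ of vertices of degree at least $3$ is a stable set, and (as $\delta(G)=2$, so that $G$ has no limb) every edge of $G$ lies in a unique branch or ring of $G$; a disconnected $G$ is treated component by component, so I assume $G$ connected. First I would dispose of the case in which $G$ has no branch vertex, so that $G$ is a single cycle: an even cycle is $2$-edge-colourable, an odd one $3$-edge-colourable, and a direct check shows that the $v_i$-constraints can be respected. The only tight situation is three constrained degree-$2$ vertices on a cycle of length $5$: this is either covered by hypothesis~(1), where $|L_1\cup L_2\cup L_3|\ge 3$ is exactly the extra room needed, or excluded by goodness via~(ii); and a small theta never lies inside a cycle, so~(3) raises no issue here. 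From now on $G$ is connected with at least one branch vertex.

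The main idea is to pre-colour the parts of $G$ that carry the $v_i$ and then finish with Lemma~\ref{ListChord} on the rest. I would: (a)~using the branch-colouring observation stated just before the lemma, pre-colour every branch whose interior contains some $v_i$, together with every branch incident to a $v_i$ that is itself a branch vertex (and similarly for rings carrying some $v_i$), arranging in addition that the colours appearing at the branch-vertex endpoints of these branches are under control; (b)~for each branch vertex $u$ put $L_u:=\{1,\dots,s\}$, or $L_u:=L_i$ when $u=v_i$, with the colours already used at $u$ by the pre-coloured branches deleted, give $L_u$ to every not-yet-coloured edge incident to $u$, and give $\{1,2,3\}$ to every edge with no branch-vertex end; (c)~apply Lemma~\ref{ListChord} to the sparse graph $G'$ obtained from $G$ by deleting the edges of the pre-coloured branches, with the stable set $S\cap V(G')$ and these lists, and superpose the two colourings.

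To justify step~(c) I would check that at each branch vertex $u$ the pre-coloured branches use pairwise distinct colours at $u$, lying in $L_i$ when $u=v_i$; then $|L_u|\ge s-(\text{number of pre-coloured branches at }u)\ge\deg_{G'}(u)$, and the same bound holds when $u=v_i$ since $|L_i|\ge\deg(u)$, every edge with no branch-vertex end has a list of size $3\le s$ while both its ends have degree at most $2$, and all edges at a vertex of $S\cap V(G')$ receive that vertex's list; so the hypotheses of Lemma~\ref{ListChord} are met. The superposition of its output with the pre-colouring is then a proper $s$-edge-colouring of $G$: each part is proper on its own, and at a branch vertex the pre-coloured branches avoid one another by the distinctness property, while the remaining edges avoid their colours because these were removed from $L_u$. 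The $L_i$-constraints hold by construction, and the running time is that of Lemma~\ref{ListChord} together with linear bookkeeping, namely $\mathcal O(nm)$.

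I expect the main obstacle to be step~(a): producing a pre-colouring that has the distinctness property and keeps the endpoint colours controlled. Inside one branch the observation already yields a valid colouring, using that $\{v_1,\dots,v_k\}$ induces no path of length $2$ and that adjacent $v_i,v_j$ satisfy $L_i\cap L_j\neq\emptyset$ (so that the edge $v_iv_j$ may be coloured first); the real work is in coordinating several pre-coloured branches that meet at a common branch vertex $u$, in particular when $u$ is itself a $v_i$, when a $v_j$ on a branch through $u$ is adjacent to $u$ (which forces the corresponding edge into the possibly one-element set $L_i\cap L_j$), or when a short branch leaves little freedom to steer its endpoint colours. Goodness~(i) caps at one the number of degree-$2$, non-free $v_i$, and conditions~(ii), (iii) and the small-theta exclusion of~(3) rule out exactly the local configurations --- three constrained degree-$2$ vertices on a $C_5$, or two short parallel branches between the same pair of branch vertices whose interiors are both constrained --- in which no pairwise-distinct choice of the forced colours exists. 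I would therefore split according to which of~(1), (2), (3) holds, with~(2) subdivided by how many $v_i$ are branch vertices or non-free and by how the $v_i$ are spread over the branches, and in each case exhibit a compatible pre-colouring. These verifications are elementary but numerous, and I expect them to form the bulk of the proof.
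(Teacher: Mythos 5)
Your overall strategy (pre-colour the branches and rings carrying the $v_i$, then finish in one shot with Lemma~\ref{ListChord} on the remaining edges) is genuinely different from the paper's proof, which is an induction on $|V(G)|$ that repeatedly peels off a single ring, limb or branch and recurses with adjusted lists at the attachment vertices. But as written your step~(c) has a genuine gap: you only verify $|L_u|\ge \deg_{G'}(u)$ at a branch vertex $u$, whereas Lemma~\ref{ListChord} requires $|L_{uv}|\ge\max\{\deg_{G'}(u),\deg_{G'}(v)\}$ for every remaining edge $uv$. Since $G$ is sparse with $\delta(G)=2$, every neighbour of a branch vertex has degree exactly~$2$, so every uncoloured edge at $u$ forces $|L_u|\ge 2$. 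This fails in configurations that your hypotheses allow: take $\Delta(G)=3$ (so $s=3$) and two free constrained vertices $v_1,v_2$ lying in the interiors of two different branches that share a degree-$3$ branch vertex $u$ (a good set with $k=2$). Your recipe pre-colours both branches, so two of the three edges at $u$ are coloured, $\deg_{G'}(u)=1$ and $|L_u|=1$, while the surviving edge $uw$ has $\deg_{G'}(w)=2$; the hypotheses of Lemma~\ref{ListChord} are violated and the lemma cannot be invoked as a black box, even though the colouring is in fact completable by hand. The same arithmetic shows that whenever $s$ minus the number of pre-coloured edges at some branch vertex drops to $1$, the one-shot application of Lemma~\ref{ListChord} breaks down, so the scheme needs a substantive repair (e.g.\ recursing with the lemma itself so that attachment vertices become constrained vertices of a smaller instance, which is essentially what the paper does).

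A second, related point: you correctly identify step~(a) --- coordinating the forced colours of several pre-coloured branches meeting at a common branch vertex, including the cases where that vertex is itself some $v_i$ or is adjacent to some $v_j$ --- as the bulk of the work, but you leave it entirely unverified. In the paper this is precisely where the long case analysis lives (Cases~1--4 of part~(2) and Cases~1--2.3.2 of part~(3), including the trick of adding the edge $u_1u_2$ when two parallel length-$2$ branches are both constrained), and the exclusions in goodness and in~(3) are used at specific places inside it. So even setting aside the Lemma~\ref{ListChord} issue, the proposal defers the decisive verifications rather than carrying them out.
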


\begin{proof}
We prove the result for each of the cases (1), (2) and (3) separately.

\medskip
\noindent
(1) Let $B=u_1u_2u_3u_4u_5u_1$ be the ring of $G$ that contains $\{v_1,\ldots,v_k\}$, and w.l.o.g.\ $v_1=u_1$, $v_2=u_2$ and $v_3=u_4$. Furthermore, let $v$ be
vertex of $B$ with maximum degree. As a first step, we $s$-edge-color $B$.

First, assume that $|L_1\cup L_2|\geq 3$. Then, we color $u_1u_2$ with a color  $c\in L_1\cap L_2$, then color edges $u_1u_5$ and $u_2u_3$ with distinct colors from $L_1\setminus\{c\}$ and $L_2\setminus\{c\}$, respectively, and finally color edges $u_3u_4$ and $u_1u_5$ with distinct colors from $L_3$. So, w.l.o.g.\ let $L_1=L_2=\{1,2\}$. Then we may assume $3\in L_3$, and let $c\in L_3\setminus\{3\}$. Now, we color the edges $u_1u_5$ and $u_2u_3$ with a color  $c'\in\{1,2\}\setminus\{c\}$,  $u_1u_2$ with the color from $\{1,2\}\setminus\{c'\}$, $u_3u_4$ in 3 and $u_4u_5$ in $c$.

So, we have obtained an $s$-edge-coloring of $B$. Let $G'$ be the graph obtained from $G$ by removing all vertices of $B$ except $v$. Now, to complete the $s$-edge-coloring of $G$ we $s$-edge-color $G'$ using Lemma \ref{ListChord} (such that all edges receive the list $\{1,2,\ldots,s\}$), and then permute the colors (in this edge-coloring of $G'$) such that all edges incident with $v$ (in $G$) have different colors.

\medskip
\noindent(2) We prove the claim by induction on $|V(G)|$. If $k=1$, then to obtain an $s$-edge-coloring of $G$ we first $s$-edge-color $G$ by Lemma \ref{ListChord} (such that all edges receive the list $\{1,2,\ldots,s\}$), and then permute the colors such that edges incident with $v_1$ have colors from the list $L_1$. So, we may assume that $k\geq 2$. Also, by induction, we may assume that $G$ is connected. We now consider the following cases.

\medskip
\noindent{\bf Case 1.} $\Delta(G)=2$.
\medskip

\noindent If there is an edge $e$ of $G$ that is not incident with $v_1$, $v_2$ nor $v_3$, then to obtain an edge coloring of $G$ we first edge-color the path $G\setminus\{e\}$ according to the lists $L_1$, $L_2$ and $L_3$ and then color the edge $e$. So, we may assume that every edge of $G$ is incident with at least one of $v_1$, $v_2$ or $v_3$.

Let $k=2$. Then $G$ is of length 4 and vertices $v_1$ and $v_2$ are not adjacent. So, to obtain an edge coloring of $G$ we first color edges incident with $v_1$ (with colors from $L_1$) and then edges incident with $v_2$ (with colors from $L_2$).

Let $k=3$. Since $G$ is not a hole of length 5, we have that $G=u_1u_2u_3u_4u_5u_6u_1$, and no two  vertices from $\{v_1,v_2,v_3\}$ are adjacent. So, we may assume w.l.o.g.\ that $v_1=u_1$, $v_2=u_3$ and $v_3=u_5$. If there is a color $c\in L_1\cap L_2\cap L_3$, then to obtain an edge-coloring of $G$ we first color edges $u_1u_2$, $u_3u_4$ and $u_5u_6$  with $c$, and then color the remaining edges according to the lists $L_1$, $L_2$ and $L_3$. So, let us assume that $L_1\cap L_2\cap L_3=\emptyset$. Then to obtain an edge-coloring of $G$ we first greedily edge-color the path $u_6u_1u_2u_3$ according to the lists $L_1$ and $L_2$. Note that either the colors of $u_6u_1$ and $u_2u_3$ are distinct, or $u_6u_1$ and $u_2u_3$ are colored with the same color which is not in $L_3$. In both cases we can color the remaining edges  of $G$ according to the list $L_3$.
\medskip

\noindent{\bf Case 2.} $v_1$ is contained in a ring.
\medskip

\noindent
By Case 1 we may assume that $\Delta (G) \geq 3$.
Let $B$ be the ring of $G$ that contains $v_1$, let $v$ be the vertex of $B$ of degree at least 3 and let $G'$ be the graph obtained from $G$ by deleting vertices of $B\setminus v$ (and edges incident with these vertices). Note that $G'$ is triangle-free sparse and that $v$ is of degree at least 3, of degree 1 or is free in $G'$. Furthermore, if $v$ is of degree 1 in $G'$, then let $P=v\ldots v'$ be the limb of $G'$ that contains $v$; otherwise $P=\{v\}$ and $v'=v$. Also, let $V''=(V(G)\setminus(V(B)\cup V(P)))\cup\{v'\}$ and $G''=G[V'']$.

In this case we will assume that $k=3$, that is, if $k=2$, then we take $v_3$ to be an arbitrary vertex such that $\{v_1,v_2,v_3\}$ satisfies conditions of this lemma, and $L_3=\{1,2,\ldots,s\}$ (such $v_3$ exists: if $v_2\not\in B$, then we may define $v_3$ to be $v$ or a free vertex of $B$; if $v_2\in B$, then we may define $v_3$ to be a vertex from $V''\setminus\{v\}$ of degree at least 3 or free). It suffices to consider the following cases.

\smallskip
\noindent{\bf Case 2.1.} $\{v_1,v_2,v_3\}\subseteq V(B)$.
\smallskip

\noindent To obtain an $s$-edge-coloring of $G$ we first edge-color $B$ according to the lists $L_1,\ldots,L_k$ (as in Case 1). Then, we $s$-edge-color $G'$ using Lemma \ref{ListChord} (such that all edges receive the list $\{1,2,\ldots,s\}$), and then permute the colors (in this edge-coloring of $G'$) such that all edges incident with $v$ (in $G$) have different colors.
\smallskip

\noindent{\bf Case 2.2.} $v_2\in B$ and $v_3\not\in B$.
\smallskip

\noindent
W.l.o.g. $v_1\neq v$.
First assume that $P=\{v\}$. If  $v_3$ is not adjacent to $v$, then  to obtain a desired edge-coloring of $G$ we first edge-color $B$ according to the lists $L_1$ and $L_2$ (as in Case 1). Let $L$ be the set of colors used for coloring edges incident with $v$ in this coloring. Then, to complete $s$-edge-coloring of $G$ we, by induction, edge-color $G'$ such that the lists $L'$ and $L_{3}$ are associated with vertices $v$  and $v_3$, where $L'=\{1,2,\ldots,s\}\setminus L$ if $v_2\neq v$, or $L'=L_2\setminus L$ if $v=v_2$. So, let us assume that $v_3$ is adjacent to $v$. Then $v_1$ and $v_2$ are not adjacent and not adjacent to $v$, and they are free or $v_2=v$. If  $v=v_2$, then  to obtain a desired edge-coloring of $G$ we first, by induction, edge-color $G'$ such that the lists $L_2$ and $L_{3}$ are associated with vertices $v_2$  and $v_3$, and then edge-color $B$ (as in Case 1) such that the lists $L_1$ and  $\{1,2,\ldots,s\}\setminus L_2'$ are associated with $v_1$ and $v_2$, where $L_2'$ is the set of colors used for coloring edges incident with $v$ in edge-coloring of $G'$. Hence, suppose that $v\neq v_2$. Let $c\in L_3$.  Then  to obtain a desired edge-coloring of $G$ we first edge-color $B$ such that  the lists $L_1$, $L_2$ and $\{1,2,\ldots,s\}\setminus \{c\}$, are associated with $v_1$, $v_2$ and $v$. Then, to complete $s$-edge-coloring of $G$ we, by induction, edge-color $G'$ such that the lists $\{1,2,\ldots,s\}\setminus L$ and $L_{3}$ are associated with vertices $v$  and $v_3$, where $L$ is the set of colors used for coloring edges incident with $v$ in this edge-coloring of $B$.

Next, assume that $P\neq \{v\}$. If $v_3\not\in V(P)$, then we first edge-color $B$ such that the lists $L_1$ and $L_2$ are associated with $v_1$ and $v_2$ (as in Case 1). Then we edge-color $P$ such that the edge incident with  $v$ is colored with a color not used for coloring edges incident with $v$ in this edge-coloring of $B$. Finally, we edge color $G''$ by induction, such that the lists $L''=\{1,2,\ldots,s\}\setminus\{c\}$ and $L_3$ are associated with $v'$ and $v_3$, where $c$ is the color used for coloring edge of $P$ incident with $v'$ (note that $L''\cap L_3\neq\emptyset$, since $|L''|=s-1$). So, suppose that $v_3\in V(P)$. If $v_3$ is not adjacent to $v$, then we first edge-color $B$ such that the lists $L_1$ and $L_2$ are associated with $v_1$ and $v_2$ (as in Case 1). Then we greedily edge-color $P$ from $v$ to $v'$, such that the edge incident with  $v$ is colored with a color not used for coloring edges incident with $v$ in this edge-coloring of $B$, and that edges incident with $v_3$ are colored with colors from $L_3$. To complete edge-coloring of $G$, we edge color $G''$ using Lemma \ref{ListChord} (such that all edges receive the list $\{1,2,\ldots,s\}$), and then permute the colors (in this edge-coloring of $G''$), such that all edges incident with $v'$ (in $G$) have different colors. Finally, suppose that $v$ and $v_3$ are adjacent.
Then $v_3$ is of degree 2 and not free, and so $v_1$ and $v_2$ are not adjacent, $v_1$ is free and $v_2$ is either free or $v_2=v$. In particular, no vertex of
$\{ v_1,v_2\}$ is adjacent to $v$. If $L_2\cap L_3\neq \emptyset$ then
let $c\in L_2\cap L_3$, and otherwise let $c$ be any color from $L_3$.
Then we first edge-color $B$ (as in Case 1), such that: if $v\neq v_2$, then lists $L_1$, $L_2$ and $\{1,2,\ldots,s\}\setminus\{c\}$  are associated $v_1$, $v_2$ and $v$; if $v=v_2$, then lists $L_1$ and $L_2\setminus\{c\}$  are associated $v_1$ and $v_2$. Then we greedily edge-color $P$ such that $vv_3$ is colored with $c$ and the other edge from $P$ incident with $v_3$ with a color from $L_3\setminus\{c\}$. Finally, we edge-color $G''$ using Lemma \ref{ListChord} (such that all edges receive the list $\{1,2,\ldots,s\}$), and then permute the colors (in this edge-coloring of $G''$) such that all edges incident with $v'$ (in $G$) have different colors.
\smallskip

\noindent{\bf Case 2.3.} $v_2,v_3\not\in V(B)$.
\smallskip

\noindent First, assume that $P=\{v\}$ (i.e.\ $v=v'$). If $v_1=v$, then to obtain an $s$-edge-coloring of $G$, we first $s$-edge-color $G'$ by induction, such that the lists $L_1$, $L_2$ and $L_3$ are associated with $v_1$, $v_2$ and $v_3$ (note that $|L_1|\geq 4$, and hence $|L_1\cup L_2\cup L_3|\geq 4$). Finally, we edge-color $B$ and then permute the colors in this edge-coloring of $B$ such that all edges incident with $v$ receive different color. So, suppose $v\neq v_1$. If $v_1$ is not adjacent to $v$, then we first $s$-edge-color $G'$ by induction, such that the lists  $L_2$ and $L_3$ are associated with  $v_2$ and $v_3$, and then edge-color $B$ (as in Case 1) such that the lists $L_1$ and $\{1,2,\ldots,s\}\setminus L$ are associated with $v_1$ and $v$, where $L$ is the set of colors used for coloring edges incident with $v$ in this edge-coloring of $G'$. Finally, suppose that $v_1$ is adjacent to $v$. Then $v_1$ is of degree 2 and not free, so $v_2$ and $v_3$ are either of degree at least 3 or free, and hence $\{v_2,v_3\}$ is anticomplete to $v$.
Let $c$ be a color from $L_1$, and $L'=\{1,2,\ldots,s\}\setminus\{c\}$. Hence, to obtain an $s$-edge-coloring of $G$, we first edge-color  $G'$ by induction, such that the lists $L'$, $L_2$ and $L_3$ are associated with $v$, $v_2$ and $v_3$ (note that $|L'|=s-1\geq 3$, and hence $|L'\cup L_2\cup L_3|\geq 3$), and then edge-color $B$ (as in Case 1) such that the lists $L_1$ and $\{1,2,\ldots,s\}\setminus L''$ are associated with $v_1$ and $v$, where $L''$ is the set of colors used for coloring edges incident with $v$ in this edge-coloring of $G'$.

Now, suppose that $v\neq v'$. If $v_2,v_3\in V''$, then we first, by induction, $s$-edge-color $G''$ such that the lists $L_2$ and $L_3$ are associated with $v_2$ and $v_3$. Then we edge-color $P$ greedily from $v'$ to $v$ (note that $v$ and $v'$ are not adjacent), such that the edge incident with $v'$ is colored with a color not used for coloring edges incident with $v'$ in this edge-coloring of $G''$, and that the edge incident with $v$ is colored with a color from $L_1$. Let this color be $c$. To complete edge-coloring of $G$, we edge-color $B$ such that: if $v_1=v$, then the list $L_1\setminus \{c\}$ is associated with $v_1$; if $v_1\neq v$, then the lists $L_1$ and $L=\{1,2,\ldots,s\}\setminus \{c\}$ are associated with $v_1$ and $v$ (note that $L\cap L_1\neq\emptyset$, since $|L|=s-1$). Next, suppose that $v_2,v_3\in V(P)$. In this case, we first edge-color $P$ such that the lists $L_2$, $L_3$ and possibly $L_1$ (if $v_1=v$) are associated with $v_2$, $v_3$ and possibly $v_1$ (if $v_1=v$). Let $c$ be the color of the edge incident with $v$ in this edge-coloring of $P$. Then we edge-color $B$ such that: if $v_1=v$, then the list $L_1\setminus \{c\}$ is associated with $v_1$; if $v_1\neq v$, then the lists $L_1$ and $L=\{1,2,\ldots,s\}\setminus \{c\}$ are associated with $v_1$ and $v$ (note that $L\cap L_1\neq\emptyset$, since $|L|=s-1$). To complete edge-coloring of $G$, we $s$-edge-color $G''$ using Lemma \ref{ListChord} (such that all edges receive the list $\{1,2,\ldots,s\}$), and then permute the colors (in this edge-coloring of $G''$) such that all edges incident with $v'$ (in $G$) have different colors.
Finally, we may assume that $v_2\in V(P)\setminus\{v'\}$ and $v_3\in V''\setminus\{v'\}$. To obtain $s$-edge-coloring of $G$ we first $s$-edge-color $P$ such that the lists $L_2$ and possibly $L_1$ (if $v_1=v$) are associated with $v_2$ and possibly $v_1$ (if $v_1=v$). Let $c$ (resp.\ $c'$) be the color of the edge incident with $v$ (resp.\ $v'$) in this edge-coloring of $P$. Next, by induction, we $s$-edge-color $G''$ such that the lists $L_3$ and $L'=\{1,2,\ldots,s\}\setminus\{c'\}$ are associated with $v_3$ and $v'$ (note that $L_3\cap L'\neq \emptyset$, since $|L'|=s-1$). To complete $s$-edge-coloring of $G$ we $s$-edge-color $B$ such that: if $v_1=v$, then the list $L_1\setminus \{c\}$ is associated with $v_1$; if $v_1\neq v$, then the lists $L_1$ and $L=\{1,2,\ldots,s\}\setminus \{c\}$ are associated with $v_1$ and $v$ (note that $L\cap L_1\neq\emptyset$, since $|L|=s-1$).
\medskip

By Case 2, from now on we may assume that no vertex from $\{v_1,\ldots,v_k\}$ is contained in a ring, and by Case 1 we may assume that $\Delta (G)\geq 3$.
In particular, since every vertex of $G$ is contained in a ring or a branch, every vertex of $\{ v_1,\ldots ,v_k\}$ is contained in a branch of $G$.

\medskip
\noindent{\bf Case 3.} $v_1$ is free.
\medskip

\noindent Let $B$ be the branch of $G$ that contains $v_1$ with endnodes $u_1$ and $u_2$, and let $G'$ be the graph obtained from $G$ by deleting internal vertices of $B$ (and edges incident with these vertices). Note that since $G$ is sparse, vertices $u_1$ and $u_2$ are free or of degree at least 3 in $G'$, and every neighbor of $u_1$ and $u_2$ is of degree 2 in $G$ and $G'$. In particular, for every $v\in V(G')\setminus\{u_1,u_2\}$, if $\{u_1,u_2,v\}$ is not contained in a ring of length 5 of $G'$, then the set $\{u_1,u_2,v\}$ is good in $G'$.
\smallskip

\noindent{\bf Case 3.1.} Neither $v_2$ nor $v_3$ is adjacent to both $u_1$ and $u_2$.
\smallskip

\noindent First, let us assume that $V(B)\cap \{v_2,\ldots,v_k\}\neq \emptyset$. If $\{v_1,\ldots,v_k\}\subseteq V(B)$, then to obtain an $s$-edge-coloring of $G$ we first $s$-edge-color $B$ according to the lists $L_1,\ldots,L_k$, and then, by induction, $s$-edge-color $G'$ with the list $L_{i}'$ associated with $u_i$, for $i\in\{1,2\}$. The list $L_i'$, for $i\in\{1,2\}$, is defined as follows: $L_{i}'=\{1,2,\ldots,s\}\setminus \{c_{u_i}\}$ if $u_i\not\in\{v_2,\ldots,v_k\}$ and $L_{i}'=L_{j}\setminus \{c_{u_i}\}$ if $u_i=v_j$, for some $2\leq j\leq k$, where $c_{u_1}$ (resp.\ $c_{u_2}$) is the color of the edge incident with $u_1$ (resp.\ $u_2$) in this edge-coloring of $B$.

Let us now assume that w.l.o.g.\ $v_2\in B$, but $v_3\not\in B$ (in this case $k=3$), and w.l.o.g.\ let $v_2$ be in the $u_1v_1$-subpath of $B$. If  $v_2$ and $v_3$ are not adjacent (i.e.\ $v_2\neq u_1$, or $v_2=u_1$ and $v_3$ is not adjacent to $u_1$), then  to obtain desired $s$-edge-coloring of $G$ we first $s$-edge-color $B$ according to the lists $L_1$ and $L_2$, and such that the edges incident with $u_1$ and $u_2$ receive different colors (this can be done since the edge incident with $u_2$ is the last that we color, and we have at least 2 options for coloring it). Then, by induction, we $s$-edge-color $G'$ such that the lists $L_{1}'$, $L_{2}'$ and $L_3$ are associated with vertices $u_1$, $u_2$ and $v_3$ ($L_{1}'$ and $L_{2}'$ are defined as in the previous part of the proof, and they satisfy $|L_1'\cup L_2'|\geq 3$, since $L_1'\neq L_2'$). So, let us assume that $v_3$ is adjacent to $u_1$ and $v_2=u_1$. Let $c\in L_2\cap L_3$. To obtain desired $s$-edge-coloring of $G$, we first greedily $s$-edge-coloring $B$ starting with the edge incident with $u_1$ and giving it a color $c'\in L_2\setminus\{c\}$, and such that the color of the edge incident with $u_2$ is not $c'$. Then, by induction, we $s$-edge-color $G'$ such that the lists $L_{1}''=L_2\setminus\{c'\}$, $L_{2}''=\{1,2,\ldots,s\}\setminus\{c_{u_2}\}$ and $L_3$ are associated with vertices $u_1$, $u_2$ and $v_3$, where $c_{u_2}$ is the color of the edge incident with $u_2$ in the edge-coloring of $B$ (note that $|L_1''\cup L_2''|\geq 3$, since $L_1''\neq L_2''$).

Finally, let us assume that $v_2$ and $v_3$ are not in $B$.
Observe that $\{ u_1,v_1,v_2,u_2\}$ cannot induce a path, since otherwise both $v_2$ and $v_3$ would be of degree 2 and not free. Therefore, by the case we are in,
at most one of the sets $\{v_2,v_3,u_1\}$ and $\{v_2,v_3,u_2\}$ induces a path of length 2. W.l.o.g.\ assume that $\{v_2,v_3,u_1\}$ does not induce a path of length 2. Furthermore, if $u_1$ is adjacent to $v_2$ or $v_3$, then that vertex is not free or degree at least 3 in $G$.
Also, by the case we are in, $\{ v_2,v_3,u_1\}$ cannot be contained in a ring of $G'$ of length 5.
 Hence the set $\{v_2,v_3,u_1\}$ is good in $G'$. Now, to obtain desired $s$-edge-coloring of $G$ we first, by induction, $s$-edge-color $G'$ such that the lists $\widetilde{L}_{1}$, $L_{2}$ and possibly $L_3$ (if $k=3$) are associated with vertices $u_1$, $v_2$ and possibly $v_3$ (if $k=3$), where $\widetilde{L}_1=\{1,2,\ldots,s\}\setminus\{c_1'\}$ ($c_1'\not\in L_1$ if $|L_1|=2$, or arbitrary otherwise). Then branch $B$ is greedily 3-edge-colored in the following way: we color the edge incident with $u_1$ with color $c_1'$, the edge incident with $u_2$ with a color not used for coloring edges incident with $u_2$ in $G'$, then color $v_1u_2$-subpath of $B$ (greedily from $u_2$ to $v_1$) and finally color $u_1v_1$-subpath of $B$ (greedily from $v_1$ to $u_1$).

\smallskip

\noindent{\bf Case 3.2.} $v_2$ is adjacent to both $u_1$ and $u_2$.
\smallskip

\noindent In this case, $v_2$ is of degree 2 and not free in $G$. Also, if $k=3$, then $v_3$ must be free or of degree at least 3 in $G$, and it follows that $v_3$ is anticomplete
to $\{ u_1,u_2\}$ and so is free or of degree at least 3 in $G'$.

First, assume that $k=2$ or $v_3\not\in B$. Note that in this case, if $k=3$, then $\{u_1,v_2,v_3\}$ is not contained in a ring of $G'$ of length 5. So, to obtain desired $s$-edge-coloring of $G$ we do the following. First, by induction, we $s$-edge-color $G'$ so that the lists $\widetilde{L}_{1}$, $L_{2}$ and possibly $L_3$ (if $k=3$) are associated with vertices $u_1$, $v_2$ and possibly $v_3$ (if $k=3$), where $\widetilde{L}_1=\{1,2,\ldots,s\}\setminus\{c_1'\}$ ($c_1'\not\in L_1$ if $|L_1|=2$, or arbitrary otherwise). Then branch $B$ is greedily 3-edge-colored in the following way: we color the edge incident with $u_1$ with color $c_1'$, the edge incident with $u_2$ with a color not used for coloring edges incident with $u_2$ in $G'$, then color $v_1u_2$-subpath of $B$ (greedily from $u_2$ to $v_1$) and finally color $u_1v_1$-subpath of $B$ (greedily from $v_1$ to $u_1$).

Next let us assume that $v_3$ is in $B$ and free. Then $v_1$ and $v_3$ are not adjacent, and let us w.l.o.g.\ assume that $v_3$ is in the $v_1u_2$-subpath of $B$. Then to obtain desired $s$-edge-coloring of $G$ we first, by induction, $s$-edge-color $G'$ such that the lists $\widetilde{L}_1$ and $L_2$ are associated with vertices $u_1$ and $v_2$, where $\widetilde{L}_1=\{1,2,\ldots,s\}\setminus\{c_1'\}$ ($c_1'\not\in L_1$ if $|L_1|=2$, or arbitrary otherwise). Then branch $B$ is greedily 3-edge-colored in the following way: we color the edge incident with $u_1$ with color $c_1'$, the edge incident with $u_2$ with a color not used for coloring edges incident with $u_2$ in $G'$, then color $v_1u_2$-subpath of $B$ (greedily from $u_2$ to $v_1$) and finally color $u_1v_1$-subpath of $B$ (greedily from $v_1$ to $u_1$).

So, w.l.o.g.\ let $v_3=u_1$. Then to obtain desired $s$-edge-coloring of $G$ we first, by induction, $s$-edge-color $G''=G\setminus\{v_2\}$, such that the lists  $L_1$, $L_3''$ and $L''$ are associated with vertices $v_1$, $v_3$ and $u_2$, where $L_3''=L_3\setminus\{c'\}$ for some $c'\in L_2\cap L_3$, and $L''=\{1,2,\ldots,s\}\setminus\{c''\}$ for some $c''\in L_2\setminus\{c'\}$ (note that $\{v_1,v_3,u_2\}$ is not contained in ring of $G''$ of length 5). Finally, we color the edge $u_1v_2$ in $c'$ and $u_2v_2$ in $c''$.
\medskip

By Case 3, from now on we may assume that no vertex from $\{v_1,\ldots,v_k\}$ is free. Therefore it suffices to consider the following cases.

\medskip
\noindent{\bf Case 4.} $v_1$ and possibly $v_3$ (if $k=3$) are of degree at least 3.
\medskip

\noindent If $v_2$ is also of degree at least 3, then the proof follows from Lemma \ref{ListChord} (with $S$ the set of all vertices of degree at least 3, lists $L_i$, for $i\in\{1,\ldots,k\}$, given to edges incident with $v_i$, and list $\{1,\ldots,s\}$ given to all other edges). So, suppose that $\deg(v_2)=2$. Let $B=u_1\ldots u_2$ be the branch of $G$ that contains $v_2$, and $G'$ be the graph obtained from $G$ by deleting internal vertices of $B$ (and edges incident with these vertices). Note that since $G$ is sparse $\{v_1,v_3\}$ is anticomplete to $\{u_1,u_2\}$ and each of the vertices $u_1$ and $u_2$ is free or of degree at least 3 in $G'$.

First, let us assume that $\Delta(G)=4$. If $v_1,v_3\not\in B$, then to obtain $s$-edge-coloring of $G$, we first $s$-edge-color $B$ according to $L_2$. Let $L'=\{1,2,\ldots,\Delta(G)\}\setminus\{c_{u_1}\}$ and $L''=\{1,2,\ldots,\Delta(G)\}\setminus\{c_{u_2}\}$, where $c_{u_1}$ (resp.\ $c_{u_2}$) is the color of the edge incident with $u_1$ (resp.\ $u_2$) in this edge-coloring of $B$. Finally, we $s$-edge-color $G'$ using Lemma \ref{ListChord}, such that the lists  $L_1$, $L'$, $L''$ and possibly $L_3$ (if $k=3$) are associated with edges incident with $v_1$, $u_1$, $u_2$ and possibly $v_3$ (if $k=3$), respectively, and the list $\{1,2,\ldots,\Delta(G)\}$ associated with all other edges. If w.l.o.g.\ $v_1=u_1$, then
to obtain $s$-edge-coloring of $G$, we first $s$-edge-color $B$ according to the lists $L_1$, $L_2$ and possibly $L_3$ (if $v_3=u_2$). Next, we associate with $v_1$ the list $L_1'=L_1\setminus\{c_{u_1}\}$, and to $u_2$ the list $L''=\{1,2,\ldots,\Delta(G)\}\setminus\{c_{u_2}\}$ if $v_3\neq u_2$, or $L''=L_3\setminus\{c_{u_2}\}$ if $v_3=u_2$, where $c_{u_1}$ (resp.\ $c_{u_2}$) is the color of the edge incident with $u_1$ (resp.\ $u_2$) in this edge-coloring of $B$. Then we $s$-edge-color $G'$, by induction, such that the lists $L_1'$, $L''$ and possibly $L_3$ (if $k=3$ and $v_3\neq u_2$) are associated with $u_1$, $u_2$ and possibly $v_3$ (if $k=3$ and $v_3\neq u_2$).

Finally, let $\Delta(G)=3$. In this case $L_1=L_3=\{1,2,3\}$, and hence any 3-edge-coloring of $G$ respects the lists $L_1$ and $L_3$. So, to obtain a desired $s$-edge-coloring of $G$ we first $s$-edge-color $G$ using Lemma \ref{ListChord} (we give the list $\{1,2,3\}$ to all edges) and then permute the colors such that the edges incident with $v_2$ receive colors from the list $L_2$.
\medskip

\noindent(3) We prove the claim by induction on $|V(G)|$. By induction, we may assume that $G$ is connected.
It suffices to consider the following cases.
\medskip

\noindent{\bf Case 1.} $v_1$ and $v_2$ are of degree at least 3.
\medskip

\noindent The proof in this case follows from Lemma \ref{ListChord} (with $S$ the set of all vertices of degree at least 3, lists $L_i$, for $i\in\{1,2\}$, given to edges incident with $v_i$, and list $\{1,2,\ldots,s\}$ given to all other edges).
\medskip

\noindent{\bf Case 2.} $v_1$ is of degree 2.
\medskip

\noindent
If $\Delta (G)=2$ then $G$ is a hole and it is easy to see how to obtain the desired coloring. So we may assume that $\Delta (G)\geq 3$.
We now consider the following cases.
\medskip

\noindent{\bf Case 2.1.} $v_1$ is contained in a ring of $G$.
\medskip

\noindent Let $B$ be that ring, let $v$ be the  vertex of degree at least 3 of $B$, and let $G'$ be the graph obtained from $G$ by deleting degree 2 vertices of $B$ (and edges incident with these vertices). Note that $G'$ is triangle-free sparse and that $v$ is of degree at least 3, of degree 1 or is free in $G'$. Also, since $G$ is sparse, $v$ is not adjacent to a vertex of degree at least 3. In particular, if $v$ is contained in a small theta of $G$ (or any of its induced subgraphs), then $v$ is not a degree 2 vertex of this small theta. Finally, if $v$ is of degree 1 in $G'$, then let $P=v\ldots v'$ be the limb of $G'$ that contains $v$; otherwise $P=\{v\}$ and $v=v'$. Also, let $V''=(V(G)\setminus(V(B)\cup V(P))\cup\{v'\}$ and $G''=G[V'']$.

If $\{v_1,v_2\}\in V(B)$, then we proceed as in Case 2.1 of part (2). So, let us assume that $v_2\not\in V(B)$. Our proof in this case is similar to the proof of Case 2.3 of part (2).

First, let $P=\{v\}$ (i.e.\ $v=v'$). If $v_1=v$, then to obtain an $s$-edge-coloring of $G$, we first $s$-edge-color $G'$ by induction such that the lists $L_1$ and $L_2$ are associated with $v_1$ and $v_2$. To complete edge-coloring of $G$, we edge-color $B$ and then permute the colors in this edge-coloring of $B$ such that all edges incident with $v$ receive different color. So, suppose $v\neq v_1$. If $v_1$ is not adjacent to $v$, then we first $s$-edge-color $G'$ (using part (2)) such that the list $L_2$ is associated with $v_2$, and then edge-color $B$ (as in Case 1 of (2)) such that the lists $L_1$ and $\{1,2,\ldots,s\}\setminus L$ are associated with $v_1$ and $v$, where $L$ is the set of colors used for coloring edges incident with $v$ in this edge-coloring of $G'$.  So, suppose that $v_1$ is adjacent to $v$. Then we first $s$-edge-coloring $G'$ by induction such that the lists  $L_2$ and $L'=\{1,2,\ldots,s\}\setminus\{c\}$ are associated with  $v_2$ and $v$, where $c\in L_1$ is arbitrary (note that $L_2\cap L'\neq\emptyset$, since $|L'|=s-1$). To complete edge-coloring of $G$ we edge-color $B$ (as in Case 1 of (2)) such that the lists $L_1$ and $\widetilde{L}=\{1,2,\ldots,s\}\setminus L''$ are associated with $v_1$ and $v$, where $L''$ is the list of colors used for coloring edges incident with $v$ in this edge-coloring of $G'$ (note that $c\in L_1\cap \widetilde{L}$).

Suppose now that $v\neq v'$. If $v_2\in V''$, then we first $s$-edge-color $G'$ (using part (2)) such that the list $L_2$ is associated with $v_2$ . Then we edge-color $P$ greedily from $v'$ to $v$ (note that $v$ and $v'$ are not adjacent), such that the edge incident with $v'$ is colored with a color not used for coloring edges incident with $v'$ in this edge-coloring of $G''$, and that the edge incident with $v$ is color with a color from $L_1$. Let this color be $c$. To complete edge-coloring of $G$, we edge color $B$ such that: if $v_1=v$, then the list $L_1\setminus \{c\}$ is associated with $v_1$; if $v_1\neq v$, then the lists $L_1$ and $L=\{1,2,\ldots,s\}\setminus \{c\}$ are associated with $v_1$ and $v$ (note that $L\cap L_1\neq\emptyset$, since $|L|=s-1$). Next, suppose that $v_2\in V(P)$. In this case, we first edge-color $P$ such that the lists $L_2$ and possibly $L_1$ (if $v_1=v$) are associated with $v_2$  and possibly $v_1$ (if $v_1=v$). Let $c$ be the color of the edge incident with $v$ in this edge-coloring of $P$. Then we edge-color $B$ such that: if $v_1=v$, then the list $L_1\setminus \{c\}$ is associated with $v_1$; if $v_1\neq v$, then the lists $L_1$ and $L=\{1,2,\ldots,s\}\setminus \{c\}$ are associated with $v_1$ and $v$ (note that $L\cap L_1\neq\emptyset$, since $|L|=s-1$). To complete edge-coloring of $G$, we $s$-edge-color $G''$ using Lemma \ref{ListChord} (such that all edges receive the list $\{1,2,\ldots,s\}$), and then permute the colors (in this edge-coloring of $G''$) such that all edges incident with $v'$ (in $G$) have different colors.
\medskip

By Case 2.1, from now on we may assume that neither $v_1$ nor $v_2$ is contained in a ring of $G$.  Let $B=u_1\ldots u_2$ be the branch of $G$ that contains $v_1$, and let $G'$ be the graph obtained from $G$ by deleting internal vertices of $B$ (and edges incident with these vertices). Since $G$ is sparse vertices $u_1$ and $u_2$ are free or of degree at least 3 in $G'$, and every neighbor of $u_1$ and $u_2$ is of degree 2 in $G$ and $G'$. In particular, if $u_1$ (resp.\ $u_2$) is contained in a small theta of $G$ (or any of its induced subgraphs), then $u_1$ (resp.\ $u_2$) is not a degree 2 vertex of this small theta.
\smallskip

\noindent{\bf Case 2.2.} $v_1$ and $v_2$ are not parallel.
\smallskip

\noindent In this case $\{u_1,u_2,v_2\}$ is not contained in a ring of $G'$ of length 5.

If $v_2\in B$, then to obtain $s$-edge-coloring of $G$ we first $s$-edge-color $B$ according to the lists $L_1$ and $L_2$, and then by induction $s$-edge-color $G'$ such that the list $L'$ and $L''$ are associated with $u_1$ and $u_2$. Lists are defined as follows: $L'=\{1,2,\ldots,s\}\setminus \{c_{u_1}\}$ (resp.\ $L''=\{1,2,\ldots,s\}\setminus \{c_{u_2}\}$) if $u_1\neq v_2$ (resp.\ $u_2\neq v_2$) or $L'=L_{2}\setminus \{c_{u_1}\}$ (resp.\ $L''=L_{2}\setminus \{c_{u_2}\}$) if $u_1=v_2$ (resp.\ $u_2=v_2$), where $c_{u_1}$ (resp.\ $c_{u_2}$) is the color of the edge incident with $u_1$ (resp.\ $u_2$) in the edge-coloring of $B$.

So, let $v_2\not\in B$. Since $\{u_1,u_2,v_2\}$ does not induce a path of length 2 (by the case we are in),and since the set is good in $G'$, to obtain $s$-edge-coloring of
$G$ we first $s$-edge-color $B$ according to the list $L_1$, and then by part (2) $s$-edge-color $G'$ with the lists $L'=\{1,2,\ldots,s\}\setminus \{c_{u_1}\}$, $L''=\{1,2,\ldots,s\}\setminus \{c_{u_2}\}$ and $L_2$ associated with $u_1$, $u_2$ and $v_2$, where $c_{u_1}$ (resp.\ $c_{u_2}$) is the color of the edge incident with $u_1$ (resp.\ $u_2$) in the edge-coloring of $B$ (note that $L_2\cap L'\neq\emptyset$ and $L_2\cap L''\neq\emptyset$).
\smallskip

\noindent{\bf Case 2.3.} $v_1$ and $v_2$ are parallel.
\smallskip

\noindent If $v_1$ or $v_2$ is free, then we can apply part (2). So, suppose that both $v_1$ and $v_2$ are not free. Let $B'$ be the branch of $G$ that contains $v_2$.
\smallskip

\noindent{\bf Case 2.3.1.} At least one of the branches $B$ and $B'$ is of length at least 3.
\smallskip

\noindent W.l.o.g.\ let $B'$ be of length at least 3 and $v_2$ adjacent to $u_1$. Now we define colors $c_1$ and $c_2$ that are going to be used when edge-coloring $G$:
\begin{itemize}
  \item if $v_1$ is adjacent to both $u_1$ and $u_2$, then $c_1$ and $c_2$ are distinct colors from $L_1$;
  \item if $v_1$ is adjacent to $u_1$, but not adjacent to $u_2$, then $c_1$ is a color from $L_1$ and $c_2$ a color not from $\{c,c_1\}$, where $c$ is a color from $L_1$ distinct from $c_1$;
  \item if $v_1$ is adjacent to $u_2$, but not adjacent to $u_1$, then $c_2$ is a color from $L_1$ and $c_1$ a color not from $\{c,c_2\}$, where $c$ is a color from $L_1$ distinct from $c_2$.
\end{itemize}
Now, we first, by part (2), $s$-edge-color $G'$ such that the lists $L'=\{1,2,\ldots,s\}\setminus\{c_1\}$, $L''=\{1,2,\ldots,s\}\setminus\{c_2\}$ and $L_2$  are associated with $u_1$, $u_2$ and $v_2$ (note that $|L'\cup L''\cup L_2|\geq 3$, since $L'\cup L''=\{1,2,\ldots,s\}$). To complete the edge-coloring of $G$ we color the branch $B$ in the following way: we first color the edges incident with $u_1$ and $u_2$ with colors $c_1$ and $c_2$, respectively, and then greedily edge-color the rest of $B$ starting from $v_1$ and according to the list $L_1$.
\smallskip

\noindent{\bf Case 2.3.2.} Branches $B$ and $B'$ are of length 2.
\smallskip

\noindent First, let us assume that both $u_1$ and $u_2$ are of degree at least 4 in $G$, and let $G''=G\setminus\{v_1,v_2\}$. Then $G''$ is triangle-free sparse and vertices $u_1$ and $u_2$ are free or of degree at least 3 in $G''$. Now we define colors $c_1,c_2,c_3,c_4$ that are going to be used when edge-coloring $G$:
\begin{itemize}

\item if $|L_1\cap L_2|\geq 2$ and $c',c''\in L_1\cap L_2$, then $c_1=c_4=c'$, $c_2=c_3=c''$;

\item if $|L_1\cap L_2|=1$ and $L_1\cap L_2=\{c\}$, then $c_1=c_4=c$, $c_2=c'$ and $c_3=c''$, where $c'\in L_2\setminus\{c\}$ and $c''\in L_1\setminus\{c\}$;

\item if $L_1\cap L_2=\emptyset$, then $c_1,c_3\in L_1$, $c_1\neq c_3$, and $c_2,c_4\in L_2$, $c_2\neq c_4$.
\end{itemize}
Now, by induction, we edge-color $G''$ such that the lists $\{1,2,\ldots,s\}\setminus\{c_1,c_2\}$ and $\{1,2,\ldots,s\}\setminus\{c_3,c_4\}$  are associated with $u_1$ and $u_2$, and then color edges $u_1v_1$, $u_1v_2$, $u_2v_1$ and $u_2v_2$ in colors $c_1$, $c_2$, $c_3$ and $c_4$, respectively.

So, we may assume that w.l.o.g.\ $\deg_G(u_1)=3$. Let $\widetilde{G}$ be the graph obtained from $G\setminus\{v_1,v_2\}$ by adding the edge $u_1u_2$. Since $u_1$ is of degree 2 in $\widetilde{G}$, graph $\widetilde{G}$ is sparse, and since $\{v_1,v_2\}$ is not contained in a small theta of $G$  graph $\widetilde{G}$ is triangle-free. Furthermore, since at least one neighbor of $u_1$ in $\widetilde{G}$ is of degree 2, $u_1$ is not a degree 2 vertex of some small theta of $\widetilde{G}$.

We define lists of colors  $\widetilde{L}_1$ and $\widetilde{L}_2$ that are going to be used for obtaining an edge-coloring of $\widetilde{G}$ (and $G$):
\begin{itemize}
  \item[(i)] if $|L_1\cap L_2|\geq 2$ and $c_1,c_3\in L_1\cap L_2$, then $\widetilde{L}_1=\{c_1,c_2\}$ and $\widetilde{L}_2=\{1,2,\ldots,s\}\setminus\{c_3\}$, where $c_2\not\in\{c_1,c_3\}$;

\item[(ii)] if $|L_1\cap L_2|=1$ and $L_1\cap L_2=\{c_1\}$, then $\widetilde{L}_1=\{c_1,c_2\}$ and $\widetilde{L}_2=\{1,2,\ldots,s\}\setminus\{c_2\}$, where $c_2\in L_1\setminus\{c_1\}$;

\item[(iii)] if $L_1\cap L_2=\emptyset$, then $\widetilde{L}_1=\{c_1,c_2\}$ and $\widetilde{L}_2=\{1,2,\ldots,s\}\setminus\{c_2\}$, where $c_1\in L_1$  and $c_2\in L_2$.
\end{itemize}
Now, by induction, we $s$-edge-color $\widetilde{G}$ such that the lists $\widetilde{L}_1$ and $\widetilde{L}_2$  are associated with $u_1$ and $u_2$. Furthermore, we can permute the colors $c_1$ and $c_2$ in case (i), such that the edge $u_1u_2$ is colored with $c_1$. Finally, to obtain an edge-coloring of $G$ we extend the obtained edge-coloring of $\widetilde{G}\setminus\{u_1u_2\}$ in the following way. In case (i) we color the edges $u_1v_1$, $u_1v_2$, $u_2v_1$ and $u_2v_2$ with colors $c_1$, $c_3$, $c_3$ and $c_1$, respectively; in case (ii) we color the edges $u_1v_1$, $u_1v_2$, $u_2v_1$ and $u_2v_2$ with colors $c_1$, $c_3$, $c_2$ and $c_1$, respectively, where $c_3\in L_2\setminus\{c_1\}$; in case (iii) we color the edges $u_1v_1$, $u_1v_2$, $u_2v_1$ and $u_2v_2$ with colors $c_3$, $c_4$, $c_1$ and $c_2$, respectively, where $c_3\in L_1\setminus\{c_1\}$ and $c_4\in L_2\setminus\{c_2\}$.
\medskip

Note that this proof yields an $\mathcal O(nm)$-time algorithm that finds described edge-coloring. Indeed, all steps in the proof can be done in linear time, except when Lemma \ref{ListChord} is applied (which takes $\mathcal O(nm)$), but then the edge-coloring of $G$ can be completed in linear time.
\end{proof}
%
%
%

\begin{lemma}\label{TwoColoredBasic}
Let $G$ be a triangle-free chordless graph, and let $v_1$ and $v_2$ be distinct vertices of $V(G)$ both of degree at least 1.
Let $\widetilde{G}$ be a graph obtained from $G$ by adding a path
$Q=q_1\ldots q_k$, $k\geq 2$, (whose vertices are  disjoint from vertices of $G$) and edges $q_1v_1$ and $q_kv_2$ (these are the only edges between $G$ and $Q$).
Assume that $\widetilde{G}$
 is also triangle-free chordless. Suppose that we are given two lists of colors $L_1,L_2\subseteq\{1,2,\ldots,s\}$, where $s=\max\{3,\Delta(G)\}$, such that $|L_1|\geq \deg_G(v_1)$,
  $|L_2|\geq\deg_G(v_2)$, and if $v_1$ and $v_2$ are adjacent, then $L_1\cap L_2\neq\emptyset$. Also, suppose that if both $v_1$ and $v_2$ are of degree 1 in $G$
  and $|L_1|=|L_2|=1$, then their neighbors in $G$ are distinct.  Then there exists an $s$-edge-coloring of $G$ such that every edge of $G$ incident with $v_i$ is colored with a color from $L_i$, for $i\in\{1,2\}$.
  Furthermore, such an edge coloring can be obtained in ${\cal O} (n^3m)$-time.
\end{lemma}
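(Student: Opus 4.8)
\emph{Overall approach.} The plan is to prove the lemma by induction on $|V(G)|$, using the extreme decomposition of chordless graphs (Theorem~\ref{DecomposChordless}) to reduce to the sparse case, where the statement is essentially Lemma~\ref{2list2sparse}; it is convenient to prove it together with its one‑distinguished‑vertex variant (the zero‑variant being Theorem~\ref{ColorChordless}). The hypothesis ``$\widetilde G$ triangle‑free chordless'' plays exactly two roles: it makes the base case go through, and it is inherited along the decomposition. For the latter, whenever $v_1,v_2$ both lie in an induced subgraph $G[Z]$ that we recurse on, the graph $G[Z]$ together with $Q$ (which is disjoint from $G$ and attaches only at $v_1,v_2$) equals $\widetilde G[Z\cup V(Q)]$, an induced subgraph of $\widetilde G$, hence triangle‑free chordless; and $\Delta$ does not increase. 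When instead we recurse with a new pair of distinguished vertices $\{a,b\}$ arising from a $2$‑cutset, we use as auxiliary path a chordless $ab$‑path of the other side, subdivided if needed so that it has at least two internal vertices; since $G$ is chordless, adjoining such a path again yields a triangle‑free chordless graph.

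\emph{Base case.} If $G$ is $2$‑connected and sparse (isolated vertices may be discarded, and non‑$2$‑connected $G$ is reduced below), then $\delta(G)=2$ and Lemma~\ref{2list2sparse}(3) applies once we know that, if $v_1$ and $v_2$ are both of degree $2$, then $\{v_1,v_2\}$ is not contained in a small theta of $G$. This is where $\widetilde G$ is used: suppose $v_1,v_2$ had degree $2$ in $G$ and lay in an induced $K_{2,3}$ with sides $\{a,b\}$ and $\{v_1,v_2,w\}$; then $N_G(v_1)=N_G(v_2)=\{a,b\}$, and the cycle $v_1 q_1\ldots q_k v_2\, a\, w\, b\, v_1$ of $\widetilde G$ has $v_1a$ as a chord (on this cycle the neighbours of $v_1$ are $q_1$ and $b$), contradicting that $\widetilde G$ is chordless. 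So Lemma~\ref{2list2sparse}(3) gives the required $s$‑edge‑coloring in time $\mathcal O(nm)$.

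\emph{Reductions and inductive step.} If $G$ is disconnected or has a cut vertex, one works component by component and block by block, permuting colours at each shared vertex so that its $\le s$ incident edges get distinct colours; in a piece containing at most one of $v_1,v_2$ we invoke the one‑distinguished‑vertex variant (for which the sparse leaves are the $k\le 1$ case of Lemma~\ref{2list2sparse}(2)). So assume $G$ is $2$‑connected and not sparse, and take by Theorem~\ref{DecomposChordless} a proper $2$‑cutset with split $(X,Y,\{a,b\})$, $|X|$ minimum, so that $a$ and $b$ have at least two neighbours in $X$ and $G[X\cup\{a,b\}]$ is sparse (it is $G_X$ with the degree‑$2$ marker deleted). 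If both $v_1,v_2\in Y\cup\{a,b\}$: first edge‑color $G[Y\cup\{a,b\}]$ by induction, using the lists $L_1,L_2$ at $v_1,v_2$ and the subdivided chordless $ab$‑path of the $X$‑side as auxiliary path; this uses certain colours at $a$ and at $b$. Then edge‑color the sparse graph $G[X\cup\{a,b\}]$ by Lemma~\ref{2list2sparse}, giving $a$ the list $\{1,\ldots,s\}$ minus the colours already at $a$, likewise for $b$, and keeping $L_i$ at $v_i$ whenever $v_i\in X$. Since $a,b$ have $\ge 2$ neighbours in $X$ and $G[X\cup\{a,b\}]$ has no marker vertex, these complementary lists have size $\ge\deg(a)$, resp.\ $\ge\deg(b)$, in $G[X\cup\{a,b\}]$, the two colorings agree on the edges at $a$ and $b$, and we obtain the desired $s$‑edge‑coloring of $G$. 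The other configurations are symmetric, except that when both $v_1,v_2$ lie on the minimal (sparse) side we reverse the order: first color $G[X\cup\{a,b\}]$ by induction with the lists $L_1,L_2$ (so $a,b$ carry no list, avoiding four constrained vertices on the sparse side), then $G[Y\cup\{a,b\}]$ by induction with the complementary lists at $a,b$.

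\emph{Main obstacle.} The hard part is this last case analysis, for two reasons. First, when Lemma~\ref{2list2sparse} is applied to a sparse block with a distinguished triple $\{a,b,v_i\}$ (or pair), one must verify that this set meets one of the structural hypotheses of that lemma --- not inducing a path of length $2$, or being ``good'', or lying in a ring of length $5$ --- and guaranteeing this may require choosing the $2$‑cutset, or which side to color first, with care; the corner cases $v_i\in\{a,b\}$, and those where the auxiliary $ab$‑path would be too short, need separate (routine) attention. Second, one must keep every list inside $\{1,\ldots,s\}$ even though $\Delta$ can strictly drop on a block; since $\Delta$ is non‑increasing under the decomposition this causes no real difficulty (if necessary, pad a block with disjoint $4$‑cycles attached at one vertex to restore its maximum degree without affecting sparseness, triangle‑freeness, or chordlessness). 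The bound $\mathcal O(n^3m)$ then follows by bounding the recursion depth by $\mathcal O(n)$, charging $\mathcal O(nm)$ per sparse leaf for Lemma~\ref{2list2sparse}, and adding the cost of computing the decompositions, exactly as for Theorem~\ref{ColorChordless}.
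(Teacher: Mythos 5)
Your overall architecture is the same as the paper's: induction on $|V(G)|$, reduction at cut vertices, Theorem~\ref{DecomposChordless} applied with a minimum-side proper 2-cutset so that the small side is sparse and $a,b$ have two neighbours there, Lemma~\ref{2list2sparse} on the sparse side, induction on the other side, and maintenance of the $\widetilde{G}$-hypothesis by gluing $Q$ to chordless paths through the side not being recursed on. Your identification of the two roles of $\widetilde{G}$ is also correct, and your small-theta exclusion in the sparse base case is exactly the paper's argument; the subdivision of the auxiliary $ab$-path and the degree-padding device are harmless substitutes for what the paper does implicitly.

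However, the two items you label ``routine'' are where the real content of the proof sits, and as written they are gaps. First, your cut-vertex reduction --- ``colour block by block, permute at the shared vertex, invoke the one-distinguished-vertex variant'' --- fails when $v_1$ and $v_2$ lie in different pieces meeting at a cut vertex $v$: the piece coloured second then has \emph{two} constrained vertices ($v_2$ and $v$), so permutation is not available and the full two-vertex lemma must be applied there; moreover, when $v_2$ is adjacent to $v$ the complementary list at $v$ need not intersect $L_2$, which is why the paper's Case~3 reserves a colour $c_2\in L_2$ \emph{before} colouring the first piece (and an analogous reservation is needed in its Case~4.1 when $v_1$ is adjacent to $a$). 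Second, verifying that the constrained set handed to Lemma~\ref{2list2sparse} on the sparse side (typically $\{v_1,a,b\}$) is good, or falls under the 5-ring clause, is not routine: the paper has to prove that $v_1$ cannot be adjacent to both $a$ and $b$, that $G[X_1\cup\{a,b\}]$ is not a hole of length~5, and that $\{a,b\}$ is not contained in a 5-ring, and these arguments use the minimality of $X_1$ together with cycle-with-chord contradictions in $\widetilde{G}$ built from $Q$ --- i.e.\ the very certificate you deploy only for inheritance. Until these verifications (the bulk of the paper's Cases~3 and~4.1) are actually carried out, and the mixed configuration $v_1\in X$, $v_2\in Y$ is treated explicitly rather than declared symmetric, the proposal is an accurate plan of the paper's proof rather than a complete proof; the remaining ingredients (both distinguished vertices on one side, and the $\mathcal{O}(n^3m)$ accounting) are fine.
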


\begin{proof}
We prove this lemma by induction on $|V(G)|$. By induction, Theorem \ref{ColorChordless} and Lemma \ref{2list2sparse}, we may assume that $G$ is connected.
\medskip

\noindent{\bf Case 1.} $G$ contains a vertex $v$ of degree 1.
\medskip

\noindent First, suppose that $G$ is a path, i.e.\ $P=v\ldots v'$. If $\{v_1,v_2\}\cap\{v,v'\}=\emptyset$, then we $s$-edge-color this path according to the lists $L_1$ and $L_2$. If $|\{v_1,v_2\}\cap\{v,v'\}|=1$ and w.l.o.g.\ $v=v_1$, then we first color the edge incident with $v$ (with a color from $L_1$ if $v_1v_2\not\in E(G)$, or a color from $L_1\cap L_2$ is $v_1v_2\in E(G)$) and then greedily $s$-edge-color the rest of $G$ (starting from $v$) according to the list $L_2$. If w.l.o.g.\ $v_1=v$ and $v_2=v'$, we first color edges incident with $v_1$ and $v_2$ (with colors from $L_1$ and $L_2$ if $v_1v_2\not\in E(G)$, or a color from $L_1\cap L_2$ if $v_1v_2\in E(G)$), and then greedily edge-color the rest of $G$.

So, suppose that $G$ is not a path. Let $B=v\ldots v'$ be the limb of $G$ that contains $v$ and let $G'$ the graph induced by $(V(G)\setminus V(B))\cup\{v'\}$. If $\{v_1,v_2\}\subseteq V(B)$, then we first $s$-edge-color $B$ in the following way: if $B$ is not of length 2 or  $\{v_1,v_2\}\neq\{v,v'\}$, then we $s$-edge-color $B$ as in the previous paragraph;  if $B$ is of length 2 and w.l.o.g\ $v_1=v$ and $v_2=v'$, then we color the edge incident with $v$ with a color $c\in L_1$ and then color the edge incident with $v'$ with a color from $L_2\setminus\{c\}$ (note that $|L_2|\geq 3$). To complete edge-coloring of $G$ we $s$-edge-color $G'$ using Theorem \ref{ColorChordless} and permute the colors in this edge-coloring of $G'$ so that the edges incident with $v'$ (in $G$) all receive different colors.

If $\{v_1,v_2\}\subseteq V(G')$, then we first, by induction, $s$-edge-color $G'$ so that the lists $L_1$ and $L_2$ are associated with $v_1$ and $v_2$ (note that $v'$ is of degree at least 2 in $G'$, so a vertex is of degree 1 in $G'$ iff it is of degree 1 in $G$). Then, we greedily $s$-edge-color $B$ (starting from $v'$) such that edges incident with $v'$ all receive different colors.

Finally, suppose w.l.o.g.\ that $v_1\in V(B)\setminus\{v'\}$ and $v_2\in V(G')\setminus\{v'\}$. If both $v_1$ and $v_2$ are of degree 1 and adjacent to $v'$, then we first color edges incident with $v_1$ and $v_2$ (with colors from $L_1$ and $L_2$), then $s$-edge-color $G\setminus\{v_1,v_2\}$ using Theorem \ref{ColorChordless} and finally permute the colors in this edge-coloring of $G\setminus\{v_1,v_2\}$ such that edges incident with $v'$ all receive different colors. So, suppose that w.l.o.g.\ $v_2$ is of degree at least 2 or not adjacent to $v'$. Then, to obtain an $s$-edge-coloring of $G$, we first greedily $s$-edge-color $B$ such that edges incident with $v_1$ receive colors from the list $L_1$. Let $L'=\{1,2,\ldots,s\}\setminus\{c\}$, where $c$ is the color of the edge incident with $v'$ in this edge-coloring of $B$. Also, let $Q''$ be the path induced by $V(Q)$ and vertices of the $v_1v'$-subpath of $B$, and let $Q'$ be the path induced by $V(Q'')\setminus\{v'\}$. Then $Q'$ is disjoint from $G'$, its endnodes are adjacent to $v'$ and $v_2$, and the graph induced by $V(G')\cup V(Q')$ is triangle-free chordless. Hence, to complete $s$-edge-coloring of $G$ we, by induction, $s$-edge-color $G'$ so that the  lists $L_2$ and $L'$ are associated with $v_2$ and $v'$ (note that $\deg_{G'}(v')\geq 2$, and if $v'v_2\in E(G)$, then $\deg_{G'}(v_2)\geq 2$ and hence $L'\cap L_2\neq\emptyset$ since $|L'|=s-1$).
\medskip

By Case 1, we may assume that $\delta(G)\geq 2$. By Theorem \ref{DecomposChordless}, it is enough to consider the following cases.

\medskip
\noindent{\bf Case 2.} $G$ is sparse.
\medskip

\noindent Follows from part (3) of Lemma \ref{2list2sparse}. Indeed, in this case $v_1$ and $v_2$ are not degree 2 vertices that belong to a small theta $H$ of $G$, since
otherwise $\widetilde{G}[V(H)\cup V(Q)]$ is a cycle with chords.

\medskip
\noindent{\bf Case 3.} $G$ has a cutvertex.
\medskip

\noindent Let $v$ be a cutvertex of $G$ and let $(X_1,X_2,\{v\})$ be a partition of $V(G)$ such that $X_1$ is anticomplete to $X_2$.

First, let us assume that $v_i\in X_i$, for $i\in\{1,2\}$. Let $Q_i'$, for $i\in\{1,2\}$, be a chordless path between $v_{3-i}$ and $v$ in $G$, and let $Q_i$ be the path induced
in $\widetilde{G}$ by $(V(Q)\cup V(Q_i'))\setminus\{v\}$. Then $Q_i$ is disjoint from $G[X_{i}\cup\{v\}]$, its endnodes are adjacent to $v$ and $v_{i}$ respectively, and the graph induced by $X_{i}\cup\{v\}\cup V(Q_i)$ is triangle-free chordless. So, if $v_i$ is not adjacent to $v$, for some $i\in\{1,2\}$, then we obtain an $s$-edge-coloring  of $G$ in the following way. We first
$s$-edge-color $G[X_{3-i}\cup\{v\}]$ using Theorem \ref{ColorChordless} and then permute the colors in this edge-coloring so that edges incident with $v_{3-i}$ receive colors from the list $L_{3-i}$. Let $L'$ be the set of colors used for coloring edges incident with $v$ in this coloring, and let $L=\{1,2,\ldots,s\}\setminus L'$. Then, by induction, we $s$-edge-color $G[X_i\cup\{v\}]$ so that edges incident with $v_i$ receive colors from the list $L_i$ and edges (from $G[X_i\cup\{v_i\}]$) incident with $v$ receive colors from the list $L$. Merging these colorings we obtain an $s$-coloring of $G$. So, let us assume that $v_i$ is adjacent to $v$, for $i\in\{1,2\}$. Let $c_i\in L_i$, for $i\in\{1,2\}$, be distinct colors. To obtain an
$s$-edge-coloring  of $G$ we first, by induction, $s$-edge-color $G[X_{1}\cup\{v\}]$, such that edges incident with $v_{1}$ receive colors from the list $L_{1}$ and edges incident with $v$ (in $G[X_{1}\cup\{v\}]$) colors from the list $\{1,2,\ldots,s\}\setminus\{c_2\}$. Let $L$ be the set of colors used for coloring edges incident with $v$ in this coloring, and let $L'=\{1,2,\ldots,s\}\setminus L$. Then, to obtain a desired edge-coloring of $G$ we, by induction, $s$-edge-color $G[X_{2}\cup\{v\}]$, so that edges incident with $v_{2}$ receive colors from the list $L_{2}$ and edges incident with $v$ (in $G[X_{2}\cup\{v\}]$) colors from the list $L'$ (note that $c_2\in L'\cap L_2$).

So, we may assume  w.l.o.g.\ that $v_1,v_2\in X_1\cup\{v\}$. To obtain an $s$-edge-coloring of $G$, we first, by induction, $s$-edge-color $G[X_1\cup \{v\}]$ so that the lists $L_1$ and $L_2$ are associated with $v_1$ and $v_2$. Then we $s$-edge-color $G[X_2\cup\{v\}]$ using Theorem \ref{ColorChordless} and permute the colors in this edge-coloring so that edges incident with $v$ (in $G$) all receive different colors.

\medskip
\noindent{\bf Case 4.} $G$ has a proper 2-cutset $\{a,b\}$.
\medskip

\noindent By Case 2 we may assume that $G$ is 2-connected. Let a proper 2-cutset $\{a,b\}$ of $G$, with the split $(X_1,X_2,\{ a,b\} )$, be chosen so that the side $X_1$ is
minimum among all such splits, that is, by Theorem \ref{DecomposChordless}, such that the block of decomposition $G_{X_1}$ is sparse and $a$ and $b$ each have at least
two neighbors in $X_1$.
Then, $G[X_1\cup\{a,b\}]$ is also triangle-free sparse, and since $G_{X_1}$ is sparse, each of the vertices $a$ and $b$ is of degree at least 3 or free in $G[X_1\cup\{a,b\}]$.
\smallskip

\noindent{\bf Case 4.1.} $v_i\in X_i$, for $i\in\{1,2\}$.
\smallskip

\noindent First, let us assume that $v_1$ is adjacent to both $a$ and $b$.
Since $G_{X_1}$ is
sparse, $v_1$ is of degree 2. By the minimality of $X_1$, this
implies that $G[(X_1\setminus\{v_1\})\cup\{a,b\}]$ is a chordless
path. Let $Q'$ be a chordless path in $G$ whose one endnode is $v_2$, the other is a vertex of $\{ a,b\}$, and no interior vertex is in $\{ a,b\}$.
Then $V(Q)\cup V(Q')\cup X_1\cup\{a,b\}$ induces in $\widetilde{G}$ a cycle with a chord ($av_1$ or $bv_1$), a contradiction.
Therefore $v_1$ cannot be adjacent to both $a$ and $b$.
Similarly, $G[X_1\cup\{a,b\}]$ is not a hole of length 5. Indeed, if we suppose the opposite, then $v_1$ is adjacent to $a$ or $b$, w.l.o.g.\ to $a$. Now, if $Q'$ is a path from $v_2$ to $a$ in $G[X_2\cup \{ a,b\} ]$ whose interior does not go through $b$, then $V(Q)\cup (Q')\cup X_1\cup\{a,b\}$ induces a cycle with chord $av_1$, a contradiction. Finally, note that $\{a,b\}$ is not contained in a ring of length 5 of $G[X_1\cup\{a,b\}]$. Indeed, if we suppose the opposite, then, since $G[X_1\cup\{a,b\}]$ is not a hole of length 5, the degree at least 3 vertex of this ring is a cutvertex of $G$.

By the previous paragraph w.l.o.g.\ $v_1$ is not adjacent to $b$. Next, suppose that $v_1a$ is also not an edge. Then, to
obtain an $s$-edge-coloring of $G$ we first $s$-edge-color
$G[X_{2}\cup\{a,b\}]$ using Theorem \ref{ColorChordless}, and then permute the colors so that edges incident with $v_{2}$
receive colors from the list $L_{2}$. Let $L_a'$ (resp.\ $L_b'$)
be the set of colors used for coloring edges incident with $a$
(resp.\ $b$) in this coloring, and let
$L_a=\{1,2,\ldots,s\}\setminus L_a'$ (resp.\
$L_b=\{1,2,\ldots,s\}\setminus L_b'$). We
complete $s$-edge-coloring of $G$ using part (2) of Lemma \ref{2list2sparse}, that is,
we $s$-edge-color $G[X_1\cup\{a,b\}]$ so that edges incident with
$v_1$, $a$ and $b$ receive colors from the lists $L_1$, $L_a$ and
$L_b$.

Hence, we may assume that $v_1$
is adjacent to $a$ (but not to $b$). Let
$c_1\in L_1$, and  $Q'$ be the path induced by $V(Q)\cup \{v_1\}$. Then $Q'$ is disjoint from $G[X_2\cup\{a,b\}]$, its endnodes are adjacent to $a$ and $v_2$, and
$\widetilde{G}[X_2\cup\{a,b\}\cup V(Q')]$ is triangle-free chordless. So, to obtain an
$s$-edge-coloring of $G$ we first, by induction, $s$-edge-color
$G[X_{2}\cup\{a,b\}]$, so that edges incident with $v_{2}$
receive colors from the list $L_{2}$ and edges incident with $a$ (in
$G[X_{2}\cup\{a,b\}]$) colors from the list
$\{1,2,\ldots,s\}\setminus\{c_1\}$. Let $L_a'$ (resp.\ $L_b'$) be
the set of colors used for coloring edges incident with $a$ (resp.\
$b$) in this coloring, and let
$L_a=\{1,2,\ldots,s\}\setminus L_a'$ (resp.\
$L_b=\{1,2,\ldots,s\}\setminus L_b'$).  To complete $s$-edge-coloring of $G$ we $s$-edge-color
$G[X_{1}\cup\{a,b\}]$ using part (2) of Lemma \ref{2list2sparse}, so that
edges incident with $v_{1}$, $a$ and $b$ receive colors from the
lists $L_{1}$, $L_a$ and $L_b$, respectively (note that
$c_1\in L_1\cap L_a$).
\smallskip

\noindent{\bf Case 4.2.} $v_1,v_2\in X_i\cup\{a,b\}$, for some $i\in\{1,2\}$.
\smallskip

\noindent Note that if $a$ and $b$ are of degree 1 in $G[X_j\cup\{a,b\}]$, for some $j\in\{1,2\}$, then their neighbors in $X_j$ are distinct. Indeed, if we suppose the opposite, then their common neighbor is a cutvertex of $G$.

Now, to obtain an $s$-edge-coloring of $G$ we first, by induction, $s$-edge-color $G[X_{i}\cup\{a,b\}]$, so that edges incident with $v_{j}$, for $j\in\{1,2\}$, receive colors from the list $L_{j}$. Let $L_a$ (resp.\ $L_b$) be the set of colors used for coloring edges incident with $a$ (resp.\ $b$) in this coloring, and let $L_a'=\{1,2,\ldots,s\}\setminus L_a$ (resp.\ $L_b'=\{1,2,\ldots,s\}\setminus L_b$). Let $Q_{3-i}'$ be a chordless path from $a$ to $b$ in $G[X_i\cup\{a,b\}]$, and $Q_{3-i}$ be the path induced by $V(Q_{3-i}')\setminus\{a,b\}$. Then $Q_{3-i}$ is disjoint from $G[X_{3-i}\cup\{a,b\}]$, its endnodes are adjacent to $a$ and $b$, and $G[X_{3-i}\cup\{a,b\}\cup V(Q_{3-i})]$ is triangle-free chordless.
So, to complete $s$-edge-coloring of $G$ we, by induction, $s$-edge-color $G[X_{3-i}\cup\{a,b\}]$, so that edges incident with $a$ and $b$ receive colors from the lists $L_a'$ and $L_b'$.
\medskip

Finally, let us explain how this proof yields an $\mathcal O(n^3m)$-time algorithm. By Case 1, in linear time we can  reduce the problem to the one where $\delta(G)\geq 2$. By Lemma \ref{2list2sparse}, Case 2 can be done in $\mathcal O(nm)$ time. In Case 3, either we use induction and apply Lemma \ref{2list2sparse}, or we use Theorem \ref{ColorChordless} to color the entire side. In each step of Case 4 we first find a desired 2-cutset, which can be done in $\mathcal O(n^2m)$ time (see \cite{mft:chordless}), and then edge-color "the basic" side, which can be done in $\mathcal O(nm)$ time (by Lemma \ref{2list2sparse}). Since there is at most $\mathcal O(n)$ steps and each time we use Theorem \ref{ColorChordless} the entire side is colored, the running time of the algorithm is $\mathcal O(n\cdot(n^2m+nm)+n^3m)=\mathcal O(n^3m)$, as claimed.
\end{proof}

\begin{lemma}\label{vcn1}
Let $G \in \mathcal D$ and let $(X_1,X_2,A_1,A_2,B_1,B_2)$ be a split of a minimally-sided 2-join of $G$, with $X_1$ being
its minimal side, and let $G_1$ and $G_2$ be the corresponding blocks of decomposition. Let $s=\max\{3,\omega(G)\}$, and assume that we
are given an $s$-coloring $c$ of $G[X_2]$. We can extend $c$ to an $s$-coloring of $G$ in ${\cal O} (n^3m)$-time.
Furthermore, if $G$ is a basic graph then it can be $\max\{3,\omega(G)\}$-colored in ${\cal O} (n^3m)$-time.
\end{lemma}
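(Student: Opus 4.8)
The plan is to reduce the coloring of $G$ to a \emph{constrained edge-coloring of the root graph of the basic block $G_1$}, which is exactly the setting of Lemma~\ref{TwoColoredBasic}. First I would collect the structure that comes for free. Since $X_1$ is a minimal side, Lemma~\ref{extreme} gives $|A_1|,|B_1|\ge 2$; the 2-join is consistent by Lemma~\ref{l:consistent}, so conditions (iv) and (v) of consistency force $A_2$ and $B_2$ to be cliques. By Lemma~\ref{new2} and Lemma~\ref{extreme}(ii), $G_1\in\mathcal D$ has no 2-join, hence is basic by Theorem~\ref{decomposeTW}. Finally, for every clique $Q$ of $G[A_1]$ the set $A_2\cup Q$ is a clique of $G$, so $|c(A_2)|+\omega(G[A_1])\le\omega(G)\le s$, and symmetrically $|c(B_2)|+\omega(G[B_1])\le s$. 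These two ``budget'' inequalities are what guarantees that enough colors lie outside $c(A_2)$ and outside $c(B_2)$. Since the only edges between $X_1$ and $X_2$ form the complete bipartite graphs $A_1$--$A_2$ and $B_1$--$B_2$, extending $c$ to $G$ amounts precisely to coloring $G[X_1]=G_1\setminus V(P_2)$ so that the color classes of $A_1$ avoid $c(A_2)$ and those of $B_1$ avoid $c(B_2)$, where $P_2=a_2m_1m_2b_2$ is the marker path of $G_1$.

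Consider first the case $G_1=L(R_1)$ with $R_1$ triangle-free and chordless. Because the interior vertices $m_1,m_2$ of $P_2$ are anticomplete to $X_1$, the path $P_2$ corresponds in $R_1$ to a path $v_0v_1v_2v_3v_4$ whose three middle vertices have degree $2$, with $a_2$ the edge $v_0v_1$ and $b_2$ the edge $v_3v_4$; setting $R_1':=R_1\setminus\{v_1,v_2,v_3\}$ one gets $G[X_1]=L(R_1')$, $A_1=\mathrm{star}_{R_1'}(v_0)$ and $B_1=\mathrm{star}_{R_1'}(v_4)$. I would then apply Lemma~\ref{TwoColoredBasic} to $R_1'$ with special vertices $v_0,v_4$, lists $L_1:=\{1,\dots,s\}\setminus c(A_2)$, $L_2:=\{1,\dots,s\}\setminus c(B_2)$, and auxiliary path $Q=v_1v_2v_3$, so that $\widetilde G=R_1$ is triangle-free chordless. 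The budget inequalities give $|L_1|\ge|A_1|=\deg_{R_1'}(v_0)$ and $|L_2|\ge|B_1|=\deg_{R_1'}(v_4)$; the degenerate hypothesis of that lemma is vacuous because $|A_1|,|B_1|\ge 2$; and $v_0,v_4$ are non-adjacent in $R_1'$, since otherwise the vertex $v_0v_4$ of $G_1$ would be adjacent to both $a_2$ and $b_2$ and hence would lie in $A_1\cap B_1=\emptyset$. Translating the resulting edge-coloring of $R_1'$ back to $L(R_1')$ colors $G[X_1]$ correctly, and merging with $c$ gives the $s$-coloring of $G$.

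When $G_1$ is a P-graph with skeleton $R_1$ and special clique $K_1$, I would run the same reduction but also color $K_1$, after first locating $P_2$: its endpoints $a_2,b_2$ have degree $\ge 3$, so each of them is either a branch vertex of $R_1$ or a vertex of $K_1$, and I would split into cases accordingly. In each case $G_1\setminus V(P_2)$ is obtained from a line graph $L(R_1')$, with $R_1'$ an induced subgraph of $R_1$ (hence triangle-free chordless), by re-attaching the surviving part of $K_1$, and with $A_1,B_1$ being either stars of $R_1'$ or pendant-vertex sets of a single label; I would edge-color $R_1'$ by Lemma~\ref{TwoColoredBasic} with the lists above, chosen so as to also leave, for each label $i$, a free color for the clique vertex $w_i$ (here condition (viii) of the skeleton and the budget for big cliques through $K_1$ are used), and then color $K_1$ greedily. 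The ``furthermore'' for a basic $G\in\mathcal D$ is the same with the external constraints dropped: if $G=L(R)$ then a $\{3,\Delta(R)\}$-edge-coloring of $R$ from Theorem~\ref{ColorChordless} is a $\max\{3,\omega(G)\}$-coloring of $G$ since $\omega(L(R))=\Delta(R)$; if $G$ is a P-graph, edge-color its skeleton by Theorem~\ref{ColorChordless} and color the special clique greedily. Each step costs $\mathcal O(n^3m)$ (the bottleneck being Theorem~\ref{ColorChordless} or Lemma~\ref{TwoColoredBasic}) and only a bounded number of steps is performed, giving total time $\mathcal O(n^3m)$.

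The main obstacle I expect is the P-graph case: one must check that $s$ colors suffice \emph{simultaneously} for the skeleton's edge-coloring, for the external lists imposed on $A_1$ and $B_1$, and for the special clique $K_1$ — that is, that $\omega(G)\le s$ leaves enough room everywhere, which requires a careful count of the big cliques of $G$ passing through $K_1$ and through $A_2,B_2$. One must also verify precisely where the marker path $P_2$ sits inside $G_1$, in particular that $a_2$ and $b_2$ never correspond to interior vertices of branches or limbs of $R_1$, so that the reduction to Lemma~\ref{TwoColoredBasic} (applied to an induced subgraph of $R_1$ with the two attachment vertices as special vertices) is legitimate.
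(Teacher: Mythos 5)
Your setup and your line-graph case match the paper: the same structural facts ($|A_1|,|B_1|\ge 2$ from Lemma~\ref{extreme}, $A_2,B_2$ cliques from consistency, $G_1$ basic), the same complementary lists $\{1,\dots,s\}\setminus c(A_2)$ and $\{1,\dots,s\}\setminus c(B_2)$, and the same reduction to Lemma~\ref{TwoColoredBasic}, with the marker path supplying the auxiliary path $Q$. The genuine gap is the P-graph case, which is the bulk of the paper's proof and which you yourself flag as the ``main obstacle'' without resolving it. Two concrete problems with your sketch. First, Lemma~\ref{TwoColoredBasic} allows list constraints at only \emph{two} vertices of the root graph, so you cannot additionally ``leave, for each label $i$, a free color for the clique vertex $w_i$''; and coloring the special clique greedily at the end can genuinely fail: a vertex $v_i$ of the special clique sees $|K|-1$ colors inside $K$ plus the colors of its pendant neighbours with label $i$, which are pairwise nonadjacent, so their number (and hence the number of forbidden colors) is not bounded by $\omega(G)$. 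Second, you assume $A_1$ and $B_1$ are ``stars of $R_1'$ or pendant-vertex sets of a single label'', but consistency (iv) only gives that either $A_1$ is a clique or $|A_2|=1$ and $A_1$ is a disjoint union of cliques; the non-clique case needs its own argument, which your plan does not contain. (The same flaw appears in your ``furthermore'' for basic P-graphs: skeleton first, special clique greedily last, does not work in general.)

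The paper resolves exactly these points by arranging for the special clique to be colored \emph{inside} a line graph rather than afterwards. It deletes from $G[X_1]$ the degree-2 vertices of $G_1$ adjacent to claw centers of the special clique (condition (viii) of the skeleton guarantees these are precisely the pendant vertices creating claws), obtaining a claw-free graph $H$ which, by Lemmas~\ref{p1l2.4} and~\ref{p2l4.2}, is the line graph of a triangle-free chordless graph and contains $K\cap X_1$. It colors $H$ by Lemma~\ref{TwoColoredBasic} (or Theorem~\ref{ColorChordless} plus a color permutation) with lists imposed only at the at most two vertices corresponding to $A_1\cap V(H)$ and $B_1$, and then extends greedily to the deleted vertices, which is safe because they all have degree 2 and $s\ge 3$. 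The case where $A_1$ is not a clique is handled via diamond-freeness of $G$ (Lemma~\ref{diamondCliqueCut}), which forces $|A_2|=1$, hence at least two available colors for $A_1$, and forces $K\cap X_1\subseteq A_1$ with $A_1\cap V(H)$ and $B_1$ cliques, so that the degree-1 (in $H$) vertices of $A_1$ can be colored last from the remaining list. Without this device, or an equivalent counting argument for the special clique and for a non-clique $A_1$, your proposal does not yet prove the lemma.
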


\begin{proof}
By Lemma \ref{extreme}, $|A_1|,|B_1|\geq 2$, and by Lemma \ref{l:consistent}, $(X_1,X_2)$ is a consistent 2-join.
Hence $A_2$ and $B_2$ are cliques. Also, by Lemma \ref{new2}, $G_1\in \mathcal D$, and by Lemma \ref{extreme}, $G_1$ does not have a 2-join.
So, by Theorem \ref{decomposeTW}, $G_1$ is basic.

Let $L_a$ (resp. $L_b$) be the set of colors that $c$ assigns to vertices of $A_2$ (resp. $B_2$).
Let $L_a'=\{ 1, \ldots ,s\} \setminus L_a$ and $L_b'=\{ 1, \ldots ,s\} \setminus L_b$.
We want an $s$-coloring of $G[X_1]$ in which the vertices of $A_1$ are colored with colors from $L_a'$ and
 vertices of $B_1$ are colored with colors from $L_b'$.

 First suppose that $G_1$ is a line graph of a triangle-free chordless graph. Then $G_1$ is claw-free and hence (since $|A_1|,|B_1|\geq 2$) $A_1$ and $B_1$
 are both cliques. Let $R$ be the triangle-free chordless graph such that $L(R)=G[X_1]$.
 Since $R$ is triangle-free, $A_1$ (resp. $B_1$) corresponds to the set of edges incident to vertex $v_1$ (resp. $v_2$) of $R$ that is of degree at least 1 in $R$.
 Note that $v_1$ and $v_2$ are not adjacent since $A_1\cap B_1=\emptyset$.
 Since $A_1,A_2,B_1,B_2$ are all cliques, $\deg_R(v_1)\leq |L_a'|$ and $\deg_R(v_2)\leq |L_b'|$.
 We associate lists $L_a'$ and $L_b'$ to vertices $v_1$ and $v_2$ respectively, and the result follows from Lemma \ref{TwoColoredBasic}.

 Now suppose that $G_1$ is a P-graph with special clique $K$ and skeleton $R$. Let $K'$ be the vertices of $K$ that are centers of claws.
 Note that all centers of claws of $G_1$ are in $K'$. For $u\in K'$, by (viii) of the definition of the skeleton of a P-graph, all pendant vertices of $L(R)$ that are adjacent to $u$
 are of degree 2 in $G_1$. Let $H$ be the graph obtained from $G[X_1]$ by removing degree 2 vertices of $G_1$ that are adjacent to a vertex of $K'$.
 Then $H$ is claw-free, and hence by Lemma \ref{p1l2.4} and Lemma \ref{p2l4.2}, $H$ is the line graph of a triangle-free chordless graph, say $R_H$.

 If $A_1$ and $B_1$ are both cliques, then (since $|A_1|,|B_1|\geq 2$) $A_1\cup B_1\subseteq V(H)$, and we $s$-color $H$, similarly to the case
 when $G_1$ was the line graph of triangle-free chordless graph, so that vertices of $A_1$ (resp. $B_1$) are colored with colors from $L_a'$ (resp. $L_b'$).
 This coloring easily extends to an $s$-coloring of $G[X_1]$ since $s\geq 3$.

 So we may assume that $A_1$ is not a clique. Since $G\in \mathcal D$, by Lemma \ref{diamondCliqueCut}, $G$ is diamond-free, and hence (since $|A_1|\geq2$) it
 follows that $|A_2|=1$.
 Therefore $|L_a'|\geq 2$. Let $a_2$ be the vertex of the marker path of $G_1$ that is complete to $A_1$. Since $A_1$ is not a clique, $a_2$ is center of a claw and hence
 $a_2\in K'$. It follows that $K\cap X_1\subseteq A_1$, and so $B_1$ is a clique.
 Let $A_1'=A_1\cap V(H)$ and $A_1''=A_1\setminus A_1'$. So vertices of $A_1''$ are all of degree 2 in $G_1$, and hence of degree 1 in $H$.
 Also, $A_1'\subseteq K$ (since the vertices that are adjacent to centers of claws of $G_1$, and in particular to $a_2$, must be either in $K$ or of degree 2 in $G_1$),
 and hence $A_1'$ is a (possibly empty) clique.

 We first $s$-color $H$ so that the vertices of $A_1'$ (resp. $B_1$) receive the colors from $L_a'$ (resp. $L_b'$), and then we extend this coloring to the desired coloring
 of $G[X_1]$.
 Clique $B_1$ of $G_1$ corresponds to edges incident to a vertex $v_2$ of $R_H$. We assign list $L_b'$ to $v_2$. Since $B_1$ and $B_2$ are cliques,
 $\deg_{R_H} (v_2)\leq |L_b'|$.
 If $A_1'=\emptyset$ then we $s$-color $H$ by Theorem \ref{ColorChordless} (in ${\cal O} (n^3m)$-time) and then permute colors so that the vertices of $B_1$ are
 colored with colors from $L_b'$. So let us assume that $A_1'\neq \emptyset$, and let $v_1$ be the vertex of $R_H$ whose incident edges correspond to vertices of
 $A_1'$. We assign list $L_a'$ to $v_1$. Since $A_2$ and $A_1'$ are cliques, $\deg_{R_H} (v_1)\leq |L_a'|$. Note that $v_1$ and $v_2$ are not adjacent in $R_H$ since
 $A_1\cap B_1=\emptyset$. It now follows from Lemma \ref{TwoColoredBasic} that we can obtain the desired $s$-coloring of $H$ in ${\cal O} (n^3m)$-time.
 So, we may assume that we have an $s$-coloring of $H$ in which vertices of $A_1'$ (resp. $B_1$) are colored with colors from $L_a'$ (resp. $L_b'$).
 We now extend that to the desired $s$-coloring of $G[X_1]$. Since $s\geq 3$ and vertices of $X_1\setminus H$ all have degree 2, we can greedily extend the coloring of $H$ to
 vertices of $X_1\setminus (H\cup A_1'')$. Since $|A_2|=1$ and $s\geq 3$, it follows that $|L_a'|\geq 2$. Since vertices of $A_1''$ are of degree 1 in $H$, we can clearly
 extend the coloring to them as well, so that they receive a color from $L_a'$.

 Therefore, $s$-coloring of $G[X_2]$ can be extended to an $s$-coloring of $G$ in ${\cal O} (n^3m)$-time. Observe that this proof also shows that any basic graph can be colored
 in ${\cal O} (n^3m)$-time.
\end{proof}

\begin{theorem}
There is an algorithm with the following specifications:
\begin{description}
\item[ Input:] A graph $G\in\mathcal C$.
\item[ Output:]
A $\chi(G)$-coloring of $G$.
\item[ Running time:] $\mathcal O(n^5m)$.
\end{description}
Furthermore, if $G\in {\cal C}$  then $\chi(G)\leq \max\{3,\omega(G)\}$.
\end{theorem}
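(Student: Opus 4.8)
The plan is to produce, for every $G\in\mathcal C$, a proper $s$-coloring with $s=\max\{3,\omega(G)\}$, in time $\mathcal O(n^5m)$. This immediately gives $\chi(G)\le\max\{3,\omega(G)\}$, and it yields a $\chi(G)$-coloring after a trivial adjustment: if $\omega(G)\ge 3$ then $\chi(G)\ge\omega(G)=s$, so the $s$-coloring is optimal; and if $\omega(G)\le 2$ (i.e.\ $G$ is triangle-free) a single $\mathcal O(n+m)$ bipartiteness test settles it — if $G$ is bipartite output a proper $2$-coloring (or $1$-coloring if $G$ has no edge), and otherwise $G$ contains an odd cycle, so $\chi(G)\ge 3$ and the $3$-coloring produced is optimal.

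First I would reduce to graphs with no clique cutset. Build a clique cutset decomposition tree $T$ of $G$ by Theorem~\ref{th:tarjan} in $\mathcal O(nm)$ time; its at most $n$ leaves lie in $\mathcal D$. After coloring each leaf $H$ with $\max\{3,\omega(H)\}$ colors by the procedure below, I combine the colorings bottom-up along $T$: at an internal node $G'$ with clique-cutset split $(A',K',B')$ and already-colored children $G'_{A'},G'_{B'}$, each coloring uses $|K'|$ distinct colors on the clique $K'$; after swapping the blocks so that $G'_{A'}$ is colored with at least as many colors as $G'_{B'}$, I permute the colors of the coloring of $G'_{B'}$ to make it agree with that of $G'_{A'}$ on $K'$ (possible since $|K'|$ is at most the number of colors used in either coloring and the coloring of $G'_{A'}$ uses no fewer colors), and glue the two colorings. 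As $A'$ is anticomplete to $B'$, the result is a proper coloring of $G'$; it uses $\max\{3,\omega(G')\}$ colors since $\omega(G')=\max\{\omega(G'_{A'}),\omega(G'_{B'})\}$. Each glue costs $\mathcal O(n)$ and $T$ has $\mathcal O(n)$ nodes, so this phase is $\mathcal O(n^2)$, and at the root we obtain a $\max\{3,\omega(G)\}$-coloring of $G$.

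The core task is to color $H\in\mathcal D$ with $\max\{3,\omega(H)\}$ colors in time $\mathcal O(n^4m)$. If $H$ has no $2$-join it is basic by Theorem~\ref{decomposeTW}, and Lemma~\ref{vcn1} colors it directly. Otherwise $H$ has no star cutset (Lemma~\ref{Star=Clique}), so by Lemma~\ref{DT-construction} I build, in $\mathcal O(n^4m)$ time, its $2$-join decomposition tree $T_H$ with spine $H=H^0,H^1,\ldots,H^p$, $p\le n$, where $H^{i+1}$ is the block of $H^i$ obtained from a minimally-sided $2$-join $(X^i_1,X^i_2)$ of $H^i$ with minimal side $X^i_1$ (as used in the construction of $T_H$) by replacing $X^i_1$ with a length-$3$ marker path $P^{i+1}=a_1xyb_1$ ($a_1$ complete to $A^i_2$, $b_1$ complete to $B^i_2$); in particular $V(H^{i+1})=X^i_2\cup V(P^{i+1})$ and $H^{i+1}[X^i_2]=H^i[X^i_2]$. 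By Lemma~\ref{new2} every $H^i$ lies in $\mathcal D$, and by Lemma~\ref{new3} the last one $H^p$ is basic. I then color the spine from $H^p$ back to $H^0$, maintaining an $s_i$-coloring of $H^i$ with $s_i:=\max\{3,\omega(H^i)\}$, each $\omega(H^i)$ being computed by Theorem~\ref{SMaxCliqueAlg} in $\mathcal O(n^2m)$ time. For the base, $H^p$ is basic, so Lemma~\ref{vcn1} gives an $s_p$-coloring. For the step, I restrict the current $s_{i+1}$-coloring of $H^{i+1}$ to $X^i_2\subseteq V(H^{i+1})$; this is an $s_{i+1}$-coloring of $H^{i+1}[X^i_2]=H^i[X^i_2]$, hence, using the inequality $s_{i+1}\le s_i$ established below, an $s_i$-coloring of $H^i[X^i_2]$, which Lemma~\ref{vcn1} applied to $H^i$ extends to an $s_i$-coloring of $H^i$ in $\mathcal O(n^3m)$ time. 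After $p+1$ steps we have an $s_0$-coloring of $H=H^0$, i.e.\ a $\max\{3,\omega(H)\}$-coloring; the total is $\mathcal O(n^4m)+(p+1)\cdot\mathcal O(n^3m)=\mathcal O(n^4m)$ per leaf, hence $\mathcal O(n^5m)$ over all $\le n$ leaves of $T$, and $\mathcal O(n^5m)$ overall.

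It remains to justify $s_{i+1}\le s_i$, equivalently $\omega(H^{i+1})\le\max\{3,\omega(H^i)\}$, which is the one point I expect to require care. A clique of $H^{i+1}$ either lies in $X^i_2$ — and then has size at most $\omega(H^i)$ — or meets the marker path $P^{i+1}$; since $x$ and $y$ have degree $2$ and are non-adjacent to $A^i_2$, resp.\ $B^i_2$, such a clique is contained in $\{a_1\}\cup A^i_2$, in $\{b_1\}\cup B^i_2$, or in one of the size-$2$ cliques $\{a_1,x\}$, $\{y,b_1\}$. Now $(X^i_1,X^i_2)$ is consistent (Lemma~\ref{l:consistent}) and $|A^i_1|\ge 2$ (Lemma~\ref{extreme}), so $A^i_2$ is a clique; if moreover $A^i_1$ is a clique then $A^i_1\cup A^i_2$ is a clique of $H^i$ of size at least $|A^i_2|+2$, and otherwise $H^i$ is diamond-free (Lemma~\ref{diamondCliqueCut}) so $|A^i_2|=1$ and $|\{a_1\}\cup A^i_2|=2\le 3$; the argument for $b_1$ and $B^i_2$ is symmetric. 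Hence $\omega(H^{i+1})\le\max\{3,\omega(H^i)\}$, as required. The genuinely hard work — extending a coloring across a $2$-join, and coloring the basic classes — is already packaged in Lemma~\ref{vcn1}; beyond that, the main obstacle is this color-count bookkeeping along the spine, together with verifying that each spine graph $H^i$ still satisfies the hypotheses of Lemma~\ref{vcn1}.
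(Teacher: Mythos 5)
Your proposal follows essentially the same route as the paper: reduce to graphs with no clique cutset via Tarjan's clique cutset tree and glue colorings bottom-up, then, for each leaf in $\mathcal D$, build the 2-join decomposition tree and color its spine from the basic graph $H^p$ back to $H^0$, using Lemma~\ref{vcn1} both to color the basic leaf and to extend a coloring of $G[X_2]$ across each 2-join. Your explicit bookkeeping that $\omega(H^{i+1})\leq\max\{3,\omega(H^i)\}$ (so that the restricted coloring is indeed an $s_i$-coloring of $H^i[X_2^i]$) is a point the paper leaves implicit, and your argument for it is correct, if slightly roundabout: since $A_1^i\neq\emptyset$, any $a\in A_1^i$ together with the clique $A_2^i$ is already a clique of $H^i$ of size $|A_2^i|+1\geq|\{a_1\}\cup A_2^i|$, with no case distinction needed.

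The one step you assume rather than justify is the parenthetical ``(as used in the construction of $T_H$)'': the tree provided by Lemma~\ref{DT-construction} only guarantees that each $(X_1^i,X_2^i)$ is an \emph{extreme}, $\mathcal M^i$-independent 2-join, not a minimally-sided one, whereas Lemma~\ref{vcn1} is stated for a minimally-sided 2-join (and its proof really uses $|A_1|,|B_1|\geq 2$, which comes from Lemma~\ref{extreme} and need not hold for a general extreme 2-join). This is precisely the wrinkle the paper's proof addresses: since $\mathcal M$-independence is irrelevant for the coloring application, one modifies the construction of $T_G$ so that it uses the minimally-sided 2-join that is found first (every minimally-sided 2-join is extreme by Lemma~\ref{extreme}, so all other properties of the tree, including basicness of the leaves, are retained). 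With that adjustment your argument goes through verbatim; without it, the invocation of Lemma~\ref{vcn1} at each spine step is not justified for the 2-joins the tree as defined actually hands you.
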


\begin{proof}
We can decide in linear time if $G$ is 2-colorable, and if it is 2-colorable we can 2-color it (also in linear time). So it is enough to give an algorithm that outputs a
$\max\{3,\omega(G)\}$-coloring of $G$.

\medskip
\noindent
{\bf Claim:} {\em Every $G\in \mathcal D$ can be $\max\{3,\omega(G)\}$-colored in $\mathcal O(n^4m)$-time.}

\medskip
\noindent
{\em Proof of Claim:} Let $G\in \mathcal D$ and let $s=\max\{3,\omega(G)\}$. We $s$-color $G$ as follows.
First check whether $G$ contains a 2-join (this can be done in $\mathcal O(n^2m)$-time by the algorithm in \cite{fast2j}). If it does not, then by Theorem \ref{decomposeTW}
$G$ is basic, and hence it can be $s$-colored in $\mathcal O(n^3m)$-time by Lemma \ref{vcn1}.
Otherwise, by Lemma \ref{DT-construction}, we construct a 2-join decomposition tree $T_G$ (of depths $1\leq p\leq n$) using marker paths of length 3, in
$\mathcal O(n^4m)$-time.
By Lemma \ref{new3} all graphs $G_B^1, \ldots ,G_B^p,G^p$ that correspond to the leaves of $T_G$ are in $\mathcal D_{\textsc{basic}}$.

All the 2-joins used in the construction of $T_G$ are extreme 2-joins. For our purpose here we want them to be minimally-sided 2-joins.
Note that by Lemma \ref{extreme}, every minimally-sided 2-join is an extreme 2-join, but not every extreme 2-join is a minimally-sided one.
The way $T_G$ is constructed in \cite{nicolas.kristina:two} first a minimally-sided 2-join is found and then in order to achieve ${\cal M}$-independence,
it is possibly pulled in the direction of minimal side to obtain another extreme 2-join that is then used in the construction of $T_G$.
If we do not care about ${\cal M}$-independence (as we do not here), we can have the algorithm that constructs $T_G$ just use the minimally-sided 2-join that is first
found. This way we obtain $T_G$ with all the other properties, except ${\cal M}$-independence, in which every 2-join used is minimally-sided (which is what we need here).

To obtain the desired coloring of $G$, we process vertices of $T_G$ from bottom up. We start with $G^p$. As $G^p$ is basic, we color it in $\mathcal O(n^3m)$-time
by Lemma \ref{vcn1}.
Since $G^p$ and $G^p_B$ are blocks of decomposition w.r.t.\ a minimally-sided 2-join of $G^{p-1}$, with $G^p_B$ being a block that corresponds to a minimal side,
by Lemma \ref{vcn1} we extend the coloring of $G^p$ to $G^{p-1}$ in $\mathcal O(n^3m)$-time.
We proceed like this up the tree, all the way to the root of $T_G$, namely $G^0=G$. As the depth of $T_G$ is at most $n$, it follows that $G$ can be $s$-colored
in $\mathcal O(n^4m)$-time.
This completes the proof of the Claim.

\medskip
We now consider $G\in {\cal C}$. By Theorem \ref{th:tarjan} we construct the clique-cutset decomposition tree $T$ of $G$ in $\mathcal O(nm)$-time.
So all the leaves of $T$ are graphs from $\mathcal D$, and there are at most $n$ of them. So to $s$-color all the leaves, by the Claim, it takes time
$\mathcal O(n\cdot n^4m)=\mathcal O(n^5m)$.
Finally, process the tree from bottom up, permuting colors of the blocks of decomposition so they agree on the clique cutset and paste the colorings of the
blocks together. Going this way all the way up to the root of $T$, we obtain the desired coloring of $G$ in $\mathcal O(n^5m)$-time.
\end{proof}

\section{A note on clique-width}
\label{s:cW}


In this section we show that the class $\mathcal D_{\textsc{basic}}$ has unbounded clique-width (and hence unbounded rank-width \cite{oum}).
So the class of (theta,wheel)-free graphs with no clique cutset has unbounded clique-width.

For $k\geq 3$, let $C_k$ be a chordless cycle of length $k$. For
$k\geq 1$, let $H_k$ be the graph on vertex set
$\{w_1,\ldots,w_{k+1},u',u'',v',v''\}$, such that
$\{w_1,\ldots,w_{k+1}\}$ induces a chordless path of length $k$, and
the only other edges of $H_k$ are $u'w_1$, $u''w_1$, $v'w_{k+1}$
and $v''w_{k+1}$.

Let  $\Phi_k$ be the class of planar bipartite
$(C_3,\ldots,C_k,H_1,\ldots,H_k)$-free graphs of vertex degree at
most~3.

\begin{lemma}[\cite{lozin:unboundedCW}]
  For any positive integer $k$, the tree- and clique-width of graphs
  in $\Phi_k$ is unbounded.
\end{lemma}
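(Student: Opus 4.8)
The plan is to establish the lemma by producing, for each fixed $k$ and every $n\ge 1$, an explicit member $W_{n,k}$ of $\Phi_k$ whose tree-width, and hence clique-width, grows without bound as $n\to\infty$. Let $W_n$ be the $n\times n$ (hexagonal) wall: a planar bipartite graph of maximum degree~$3$ that contains an $(\Omega(n)\times\Omega(n))$-grid as a minor and therefore has tree-width $\mathrm{tw}(W_n)=\Omega(n)$. Let $W_{n,k}$ be obtained from $W_n$ by subdividing \emph{every} edge exactly $k$ times, so that each edge of $W_n$ becomes a path of length $k+1$ through $k$ new vertices of degree~$2$. Then $W_{n,k}$ is still planar and of maximum degree~$3$, and since every edge is subdivided the same number of times, the length of every cycle is multiplied by $k+1$, so $W_{n,k}$ stays bipartite.

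First I would verify $W_{n,k}\in\Phi_k$. Planarity, bipartiteness and the degree bound have just been recorded. The shortest cycle of $W_n$ bounds a hexagonal face and has length~$6$, so the shortest cycle of $W_{n,k}$ has length $6(k+1)>k$, whence $W_{n,k}$ is $(C_3,\dots,C_k)$-free. For the graphs $H_i$ with $1\le i\le k$, the key observation is that the vertices of degree~$3$ of $W_{n,k}$ (call them its branch vertices) are exactly the original vertices of $W_n$, and that any path of $W_{n,k}$ joining two distinct branch vertices has length at least $k+1$, because it must run through at least one subdivided edge in full. Now $H_i$ has the property that its two vertices $w_1$ and $w_{i+1}$ both have degree~$3$; so in any (induced or not) subgraph of the subcubic graph $W_{n,k}$ isomorphic to $H_i$, these two vertices would be branch vertices joined by the path $w_1\cdots w_{i+1}$ of length $i\le k<k+1$ --- which is impossible. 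Hence $W_{n,k}$ contains no $H_i$ for $i\le k$, and so $W_{n,k}\in\Phi_k$.

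Next, the tree-width of $W_{n,k}$ is unbounded: contracting each subdivided edge back to a single edge exhibits $W_n$ as a minor of $W_{n,k}$, so $\mathrm{tw}(W_{n,k})\ge\mathrm{tw}(W_n)=\Omega(n)$. To pass from tree-width to clique-width, I would use the fact that within any class of graphs of bounded maximum degree, bounded clique-width is equivalent to bounded tree-width; concretely, a graph of clique-width $q$ containing no $K_{t,t}$ subgraph has tree-width $O(qt)$ (Gurski--Wanke), applied here with $t=4$ since $W_{n,k}$ is subcubic. Since the family $\{W_{n,k}:n\ge1\}$ has bounded degree but unbounded tree-width, it has unbounded clique-width, and both assertions of the lemma follow.

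The step I expect to be the crux is the simultaneous destruction of all forbidden subgraphs: the single subdivision parameter $k$ must push the girth above $k$ \emph{and} push the minimum distance between branch vertices above every index $i\le k$, while leaving the graph planar, bipartite and subcubic; the quantitative heart of this is the lower bound $k+1$ on branch-vertex distances in $W_{n,k}$. A secondary care point is citing the correct form of the clique-width/tree-width relation for bounded-degree graphs, which is exactly what converts ``unbounded tree-width'' into the desired ``unbounded clique-width.''
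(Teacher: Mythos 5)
Your proof is correct, but note that the paper does not actually prove this lemma: it is quoted verbatim from Lozin and Rauthenbach \cite{lozin:unboundedCW}, so there is no in-paper argument to compare against. Your self-contained argument via $k$-subdivided walls is sound: girth $6(k+1)>k$ kills $C_3,\dots,C_k$; any vertex of degree~$3$ in the subcubic host that plays the role of $w_1$ or $w_{i+1}$ in a copy of $H_i$ must be an original wall vertex, and since subdivision vertices have degree~$2$, any path between two distinct original vertices decomposes into whole subdivided edges and so has length at least $k+1>i$, which excludes every $H_i$ (even as a non-induced subgraph, which is stronger than the induced-subgraph exclusion actually required); planarity, bipartiteness and the degree bound are preserved; tree-width is inherited from the wall minor; and the passage from unbounded tree-width to unbounded clique-width via the Gurski--Wanke bound for graphs with no $K_{t,t}$ subgraph (or, equivalently, the general fact that bounded clique-width plus bounded degree implies bounded tree-width) is a standard and correctly applied step. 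One small imprecision: the degree-$3$ vertices of $W_{n,k}$ are not \emph{exactly} the original wall vertices (boundary vertices of the wall have degree~$2$), but only the inclusion you actually use --- every degree-$3$ vertex of $W_{n,k}$ is an original vertex of $W_n$ --- and the path-length bound between distinct original vertices are needed, so this does not affect the argument.
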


Note that every $(H_1,C_3,C_4)$-free graph is chordless and triangle-free, so the class of triangle-free chordless graphs is the superclass of $\Phi_4$, which, by previous lemma, has unbounded clique-width. Furthermore, if $\Phi_4'$ is the class of 2-connected graphs from $\Phi_4$, then $\Phi_4'$ also has unbounded clique-width (see, for example, \cite{klm}). Moreover, the following holds.

\begin{lemma}[\cite{klm}]
  If $\mathcal G$ is a class of graph that has unbounded clique-width, then the class $L(\mathcal G):=\{L(G)\,:\,G\in\mathcal G\}$ also has unbounded clique-width.
\end{lemma}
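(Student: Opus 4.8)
The plan is to prove the contrapositive: if the class $L(\mathcal G)$ has bounded clique-width, then $\mathcal G$ has bounded clique-width. Since the clique-width of a disjoint union is the maximum of the clique-widths of its components and $L$ commutes with disjoint union, I may assume the graphs are connected, and (deleting isolated vertices, which change neither clique-width nor treewidth) have no isolated vertex. The first ingredient I would establish is the general inequality $\mathrm{tw}(G)\le 2\,\mathrm{tw}(L(G))+1$: take a tree-decomposition of $L(G)$ and replace each bag $B$ (a set of edges of $G$) by the set of all endpoints of the edges in $B$. Each bag at most doubles in size, and the result is a valid tree-decomposition of $G$, because every edge of $G$ is a vertex of $L(G)$ and so lies in some bag (hence both its endpoints lie in the corresponding new bag), and the edges incident with a fixed vertex $v$ form a clique $Q_v$ of $L(G)$, hence occur together in some bag, which forces the set of new bags containing $v$ to be connected in the tree. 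Combining this with the known fact that bounded treewidth implies bounded clique-width (indeed $\mathrm{cw}(H)\le 3\cdot 2^{\mathrm{tw}(H)-1}$), we conclude: if $\mathcal G$ has unbounded clique-width then it has unbounded treewidth, and therefore $L(\mathcal G)$ has unbounded treewidth as well.

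It then remains to pass from unbounded treewidth of $L(\mathcal G)$ back to unbounded clique-width of $L(\mathcal G)$, and this is the delicate point: the implication "unbounded treewidth $\Rightarrow$ unbounded clique-width" is false for arbitrary classes (witness $\{K_{n,n}:n\ge 1\}$, which has clique-width $2$ but unbounded treewidth), so it is here that the hypothesis that the class consists of line graphs must genuinely be used. This is exactly the content of the theorem of Gurski and Wanke characterising when the class of line graphs of a class has bounded clique-width, and for the statement in full generality I would simply invoke that result (as recorded in \cite{klm}). I expect this to be the main obstacle: there is no purely "decomposition-pushing" argument, since, unlike treewidth, clique-width is not controlled by a local cut structure, and line graphs of bounded-clique-width graphs need not have bounded clique-width without a degree restriction.

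For the only application of this lemma in the paper, however — namely with $\mathcal G=\Phi_4'$, which has maximum degree at most $3$ — a short self-contained completion is available, and I would include it. If $\mathcal G$ has maximum degree at most $\Delta$, then $L(\mathcal G)$ has maximum degree at most $2\Delta-2$, so it is again a class of bounded degree; and a bounded-degree class has bounded clique-width if and only if it has bounded treewidth (a bounded-degree graph contains no $K_{1,\Delta+1}$, so the bound of treewidth by a function of clique-width and of the largest complete bipartite subgraph applies). Hence for $\mathcal G=\Phi_4'$ the chain "unbounded clique-width of $\Phi_4'$ $\Rightarrow$ unbounded treewidth of $\Phi_4'$ $\Rightarrow$ unbounded treewidth of $L(\Phi_4')$ $\Rightarrow$ unbounded clique-width of $L(\Phi_4')$" closes the argument, which is all that Section~\ref{s:cW} needs; the unrestricted statement is then quoted from \cite{klm}.
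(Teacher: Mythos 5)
This lemma is not proved in the paper at all: it is quoted from \cite{klm}, so there is no internal proof to compare against, and your honest assessment of where the real content lies is accurate. Your reduction chain is correct as far as it goes: unbounded clique-width of $\mathcal G$ forces unbounded treewidth of $\mathcal G$ (since bounded treewidth implies bounded clique-width), and your bag-doubling argument for $\mathrm{tw}(G)\le 2\,\mathrm{tw}(L(G))+1$ is sound (the connectivity condition holds because the subtrees of bags containing the edges incident with a fixed vertex pairwise intersect, hence have connected union), so $L(\mathcal G)$ has unbounded treewidth. You also correctly identify that the remaining step, from unbounded treewidth of a class of line graphs to unbounded clique-width, is exactly the non-trivial content of the cited result (Gurski--Wanke: a class has bounded treewidth if and only if its class of line graphs has bounded clique-width), and invoking it there is no different in spirit from what the paper and \cite{klm} do. What you add beyond the paper is a genuinely self-contained closure for the only case the paper needs: since $\Phi_4'$ has maximum degree $3$, the line graphs have bounded degree, hence exclude a fixed complete bipartite subgraph, so treewidth is bounded by a function of clique-width for them, and the chain closes without the general Gurski--Wanke theorem. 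That is a legitimate and slightly more elementary route for Section~\ref{s:cW}; the trade-off is that the lemma in its full generality remains a citation in both treatments, whereas your specialized argument only covers bounded-degree classes.
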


This lemma, together with our previous observations, implies that the class $L(\Phi_4')$ has unbounded clique-width. Since $L(\Phi_4')\subseteq \mathcal D_{\textsc{basic}}$, we conclude that the class $\mathcal D_{\textsc{basic}}$  has unbounded clique-width.

Interestingly, N.K. Le \cite{le} proved that the class of (theta, wheel, prism)-free graphs that do not have a clique cutset has bounded clique-width
(using the decomposition theorem for this class from \cite{twf-p1}).
This means that  one can use the machinery of
 \cite{courcelle:CW} and \cite{tarjan} to obtain faster polynomial-time algorithms for coloring and stable set problem for (theta, wheel, prism)-free graphs.

\end{document}